\newcommand{\R}{{\mathbb R}}
\newcommand{\N}{{\mathbb N}}
\newcommand{\C}{{\mathbb C}}
\newcommand{\bbC}{{\mathbb{C}}}
\newcommand{\bbH}{{\mathbb{H}}}
\newcommand{\bbM}{{\mathbb{M}}}
\newcommand{\bbR}{{\mathbb{R}}}
\newcommand{\bv}{{\mathbf{v}}}
\newcommand{\bw}{{\mathbf{w}}}
\newcommand{\cC}{{\mathcal C}}
\newcommand{\cD}{{\mathcal D}}
\newcommand{\cE}{{\mathcal E}}
\newcommand{\cF}{{\mathcal F}}
\newcommand{\cG}{{\mathcal G}}
\newcommand{\cH}{{\mathcal H}}
\newcommand{\cK}{{\mathcal K}}
\newcommand{\cL}{{\mathcal L}}
\newcommand{\cM}{{\mathcal M}}
\newcommand{\cN}{{\mathcal N}}
\newcommand{\cU}{{\mathcal U}}
\newcommand{\cW}{{\mathcal W}}
\newcommand{\cY}{{\mathcal Y}}
\newcommand{\il}{i\lambda^{1/2}}
\renewcommand{\a}{\alpha}
\renewcommand{\d}{\delta}
\renewcommand{\k}{\varkappa}
\renewcommand{\l}{\lambda}
\renewcommand{\o}{\omega}
\newcommand{\s}{\sigma}
\newcommand{\x}{\xi}
\newcommand{\z}{\zeta}
\newcommand{\G}{\Gamma}
\DeclareMathOperator{\meas}{meas}
\DeclareMathOperator{\dist}{dist}
\DeclareMathOperator{\supp}{supp}
\DeclareMathOperator{\rank}{rank}
\DeclareMathOperator{\dom}{dom}
\DeclareMathOperator{\diag}{diag}
\DeclareMathOperator{\adj}{adj}
\DeclareMathOperator{\Span}{span}
\DeclareMathOperator{\sing}{sing}
\renewcommand{\Re}{\text{\rm Re}}
\renewcommand{\Im}{\text{\rm Im}}
\newcommand{\loc}{\text{\rm{loc}}}
\newcommand{\beq}{\begin{equation}}
\newcommand{\enq}{\end{equation}}
\newcommand{\lb}{\label}
\newcommand{\f}{\frac}
\newcommand{\wti}{\widetilde}
\newcommand{\bi}{\bibitem}
\renewcommand{\ge}{\geqslant}
\renewcommand{\le}{\leqslant}
\newcommand{\vect}[1]{\begin{pmatrix}#1\end{pmatrix}}
\numberwithin{equation}{section}
\renewcommand{\P}{{\mathcal P}}
\allowdisplaybreaks \numberwithin{equation}{section}
\newtheorem{theorem}{Theorem}[section]
\newtheorem{proposition}[theorem]{Proposition}
\newtheorem{lemma}[theorem]{Lemma}
\newtheorem{corollary}[theorem]{Corollary}
\newtheorem{definition}[theorem]{Definition}
\newtheorem{hypothesis}[theorem]{Hypothesis}
\theoremstyle{remark}
\newtheorem{remark}[theorem]{Remark}
\newtheorem{remarks}[theorem]{Remarks}
\newtheorem{example}[theorem]{Example}
\begin{document}

\numberwithin{equation}{section}
\allowdisplaybreaks

\title[Eigenvalues and boundary value problems]{Finding eigenvalues of 
holomorphic Fredholm operator pencils using boundary value problems and contour 
integrals}

  \author[W.-J.\ Beyn,  Y.\ Latushkin and J. Rottmann-Matthes]{Wolf-J\"urgen Beyn, Yuri Latushkin and Jens Rottmann-Matthes \hspace{80pt}}

\address{Department of Mathematics, Bielefeld University, 
P.O. Box 100131, D-33501, Bielefeld, Germany}
\email{beyn@math.uni-bielefeld.de}
\urladdr{http://www.math.uni-bielefeld.de/~beyn/AG\_Numerik}

\address{Department of Mathematics,
University of Missouri, Columbia, MO 65211, USA}
\email{latushkiny@missouri.edu}
\urladdr{http://www.math.missouri.edu/personnel/faculty/latushkiny.html}

\address{Department of Mathematics, Bielefeld University, 
P.O. Box 100131, D-33501, Bielefeld, Germany}
\email{jrottman@math.uni-bielefeld.de}
\urladdr{http://www.math.uni-bielefeld.de/~jrottman/}

\thanks{Supported by DFG CRC 701 'Spectral Structures and Topological Methods in
Mathematics' at Bielefeld University and partially by the US National Science
Foundation under Grants  DMS-0754705, DMS-1067929, by the Research Board and Research Council of the University of Missouri.}

\date{\today}
\subjclass[2000]{Primary 47A56, 47J10; Secondary 34D09, 34L16}
\keywords{Evans functions, linear stability, traveling waves, Keldysh Theorem, reaction-difusion equations}

\hspace*{-3mm} 
\begin{abstract} 
  Investigating the stability of nonlinear waves often leads to linear or
  nonlinear eigenvalue problems for differential operators on unbounded
  domains.

  In this paper we propose to detect and approximate the point spectra
  of such operators (and the associated eigenfunctions) via contour
  integrals of solutions to resolvent equations.
  The approach is based on Keldysh' theorem and extends a recent method
  for matrices depending analytically on the eigenvalue parameter.
  We show that errors are well-controlled under very general assumptions
  when the resolvent equations are solved via boundary value problems on 
  finite domains. Two applications are presented: an analytical study
  of Schr\"odinger operators on the real line as well as 
  on bounded intervals and a numerical study of the FitzHugh-Nagumo
  system.

  We also relate the contour method to the well-known Evans function and
  show that our approach provides an alternative to evaluating and
  computing its zeroes. 
\end{abstract}

\maketitle

{\scriptsize{\tableofcontents}}
\normalsize


\section{Introduction}  \lb{s:Intro}

Studies of the analytic Evans function (\cite{AGJ90,PW92}) have
found numerous applications to the stability problem for nonlinear waves.
The function has proved to be an invaluable tool  for locating  point
spectra of differential operators that are defined on the real line or,
more generally, on cylindrical domains. In particular, analyzing the 
behavior of
the Evans function near zero or near infinity provides one major step in
 studying point spectra rigorously. 
We refer to  \cite{Sa02}, \cite{KK04}, \cite{GLM07},\cite{OS10}, \cite{DL}
for a variety of applications and extensions of the concept
to other settings. 

There have been quite a few approaches to computing the Evans function 
numerically and then determine from its zeros the unknown eigenvalues,
see \cite{Br01}, \cite{BDG02}, \cite{BZ02}, \cite{HZ06}, \cite{HSZ06},
\cite{MN08}, \cite{LMT10}, \cite{SE10}. 
The standard definition of the Evans function (see \cite{Sa02})
involves a determinant of vectors which depend analytically on the eigenvalue parameter and which are determined as initial values of
 exponentially decaying solutions on both semi-axes. This leads
to the problem of integrating stiff ODEs while keeping the analyticity
with respect to the parameter. For higher dimensions the evaluation
of a determinant may also lead to instabilities unless a proper scaling
is employed, see for instance \cite[Thm.4.15]{GLZ} for a discussion of the appropriate scaling of the Evans function. The techniques proposed to solve these problems utilize
exterior products \cite{Br01},\cite{BZ02},\cite{AB02}, \cite{BDG02}
or solve Kato's matrix differential equation \cite[Sec.II.4.2]{K80},
see \cite{HSZ06}. 

In this paper we pursue an alternative road that avoids the
 intermediate stage of computing the Evans function. Rather we propose to solve the
 original analytical eigenvalue problem in the whole space via well-posed boundary value problems on finite domains while preserving analyticity. 
For the latter problem we propose a numerical method using contour
integrals of solutions to resolvent equations. 
The approach is based on Keldysh' theorem and extends
a recent method  \cite{B12} for nonlinear eigenvalue problems with matrices.
Our goal is to determine all eigenvalues inside a given contour
$\Gamma \subset \C$ and to guarantee that, at each stage
of approximation, approximate equations are as  well-conditioned as the 
original eigenvalue problem. The method applies
 to general nonlinear eigenvalue problems 
\begin{equation} \label{noneig}
F(\lambda) v =0, \quad \lambda \in \Omega,
\end{equation}
where $F(\l)$ are Fredholm operators of index $0$ that depend analytically on
$\l \in \Omega$.
For certain right-hand sides $v$ and functionals $w$ we require to evaluate
the integrals
\begin{equation} \label{contougen}
\frac{1}{2 \pi i}\int_{\G}\langle w,  F(\l)^{-1}v \rangle \,d\l \quad \text{and} \quad
\frac{1}{2 \pi i}\int_{\G} \l \, \langle w, F(\l)^{-1}v\rangle \, d\l,
\end{equation}
where $\Gamma$ lies in the resolvent set.
The idea first appeared for the matrix case in the papers \cite{A08},\cite{A09}
where it was used in connection with the Smith normal form. In \cite{B12}
we arrived independently at similar expressions (without applying the functional
$w$) by using the theorem of Keldysh, see \cite[Ch.1]{MM}. It turns out that the
Keldysh approach allows to locate eigenvalues precisely, to handle multiplicities and to simultaneously compute
(generalized) eigenvectors, see Section \ref{s:multipleev}. 
Moreover, Keldysh' Theorem works for abstract
Fredholm operators \cite{MM} and thus allows to generalize the whole
approach as we will show in Section \ref{s:abst}. Note that 
contour integrals are normally used for computing the winding number
and thus the number of zeros inside the contour  (see \cite{Br01}, \cite{BDG02}, \cite{BZ02}
in case of the Evans function), whereas our method allows to locate
all zeros inside the contour and, in addition, to obtain approximate eigenfunctions (Theorems \ref{simpleeigen}, \ref{multieigen} and
equation \eqref{vapprox}).

In Section \ref{s:evfun} we discuss suitable normalizations of
the Evans function and relate our contour method to such a
normalized Evans function (Theorems \ref{zerobehavior}, \ref{formulaE}).
 Then we apply the method
to first order differential systems with 
$\l$-dependent matrices. We continue this in Section \ref{s:converge} and show that isolated eigenvalues are well approximated when the resolvent equations are solved on bounded intervals
with suitable boundary conditions (Theorems \ref{thm:EConv}, \ref{thm:EvalsConv}). Here we follow \cite{BR07} where
it is shown that the functional analysis of discretization methods
\cite{V76}, \cite{V80} applies to this situation. In Section \ref{s:appl}
we apply the theory to Schr\"odinger operators on the real line and express
the various quantities of our approach in terms of Jost solutions and Levinson's Theorem (Theorems \ref{thm:singformjost}, \ref{EntoE}).
 Section \ref{sec:num} concludes with several numerical
experiments for the FitzHugh-Nagumo system which demonstrate
the robustness of the contour method. In particular, we show how computational
errors depend on the length of the finite interval, the number
of quadrature points used for \eqref{contougen}, and on the rank test
used to determine the number of eigenvalues inside the contour.


\section{Abstract results for the contour method}\label{s:abst}
\subsection{The abstract setting}
\label{s:abstset}
We consider nonlinear eigenvalue problems as in  \eqref{noneig}
where $F:\Omega\to\cL(\cH,\cK)$ is a holomorphic function on a
 domain $\Omega\subseteq\bbC$ with values in the space $\cL(\cH,\cK)$ of bounded linear operators 
 from some complex Banach space $\cH$ into another Banach space $\cK$. We will assume that the operators $F(\lambda)$ are Fredholm of index $0$ for all 
$\lambda \in \Omega$ (this will be written as $F\in \bbH(\Omega,\cF(\cH,\cK))$). As usual, cf.\ \cite[Ch.I]{MM}, we define the resolvent set and the spectrum of the operator pencil $F$ by
\[\rho(F)= \{\l\in \Omega: F(\l)\,\mathrm{invertible}  \}, \qquad 
\sigma(F)= \Omega \setminus \rho(F).
\]
Throughout, we impose the following assumptions.
\begin{hypothesis}\label{hyp0} 
Assume that $F\in \bbH(\Omega,\cF(\cH,\cK))$ and $\rho(F)\neq\emptyset$.
\end{hypothesis}
 Under these assumptions $\sigma(F)$ is a discrete subset of $\Omega$ and  the operator valued function $F^{-1}(\cdot)$ is meromorphic, see \cite[Thm.1.3.1]{MM}. In the terminology of \cite[Ch.IX]{GGK} the function  $F^{-1}(\cdot)$ is finitely meromorphic.
In general, we follow the setting in \cite[Ch.I]{MM}. In particular, we use
dual spaces $\cH'$, $\cK'$ and denote the dual pairing by elements $w\in \cH'$,
$u \in \cK'$ in two equivalent ways
\begin{equation}\label{fdual} w^\top v=\langle w, v \rangle, \quad v\in \cH, \qquad
u^\top v=\langle u, v \rangle, \quad v\in \cK.
\end{equation}
Thus, the dual space is the space of linear (versus complex conjugate linear) functionals.  As noted in \cite[Sec.1.1]{MM} it is important to avoid the complex adjoint 
space and complex conjugate functionals (even if $\cH$ and $\cK$ are Hilbert 
spaces),
since this will turn holomorphic functions into antiholomorphic ones. We prefer
the notion $w^\top$ over $w^*$ and $w'$ because it is in accordance with matrix-vector 
notation. Correspondingly, we denote the dual of an operator 
$A\in \cL(\cH,\cK)$ by $A^\top\in \cL(\cK',\cH')$. It is defined by
\[ \langle A^\top w, v\rangle =(A^\top w)^\top  v = w^\top (Av) = \langle w, Av\rangle,
\quad w\in \cK',v\in \cH.
\] 
 Any two elements $v\in\cH$ and $w\in \cK'$ define an operator
$v w^\top\in \cL(\cK,\cH) $ as follows
\begin{equation} \label{dyadic}
 (v w^\top) u   :=v \,(w^\top u) = v\,\langle w, u \rangle, \qquad u\in \cK.
\end{equation}
Because of this definition we omit brackets and simply write $v w^\top u$.
Note that the operator $v w^\top$ is of rank one if $v\neq0,w\neq0$. We
denote by $\mathcal{N}(\cdot)$ the null-space and by
$\mathcal{R}(\cdot)$ the range of a linear operator.

Let $\G$ be a smooth contour in $\Omega$ surrounding a bounded subdomain 
$\mathrm{int}(\G)=\Omega_0\subseteq \Omega$. Since  $\Omega_0\cup \G$ is
compact and eigenvalues are isolated \cite[Thm.1.3.1]{MM} there are at
most finitely many eigenvalues  $\l_1,\l_2,\dots,\l_\k\in\Omega_0$,
 and, in addition, they are of finite multiplicity. 
Our goal is to determine these eigenvalues and good approximations of the
eigenvectors by computing contour integrals of the type \eqref{contougen}

In the following choose $m$ linearly independent functionals $\widehat{w}_j\in \cH'$, 
$j=1,\dots,m$,
and $\ell$ linearly independent vectors $\widehat{v}_k\in \cK$, $k=1,\dots,\ell$.
Let us assume that $F(\l)$ is invertible for all $\l\in\Gamma$. Then we can solve 
the following equations for unknown vectors $y_k(\l)\in \cH$:
\begin{equation}\label{yeq}
F(\l)y_k(\l)=\widehat{v}_k,\quad \l\in\G,\; k=1,\dots,\ell.
\end{equation}
We introduce an $(m\times\ell)$-matrix valued function, $E(\cdot)$, on $\G$ as
 follows:
\begin{equation}\lb{defE}
E(\l)=\Big(\langle \widehat{w}_j, y_k(\l) \rangle \Big)_{j,k=1}^{m,\ell},
\quad\l\in\G.
\end{equation}

In addition, we introduce the following $(m\times\ell)$ matrices:
\beq\lb{defB01}D_0=\frac{1}{2\pi i}\int_\G E(\l)\,d\l,\quad
D_1=\frac{1}{2\pi i}\int_\G \l E(\l)\,d\l.\enq
\begin{remark}\lb{remB01}
 We note that equation \eqref{yeq} is the only information used to obtain 
$E(\l)$ and thus the matrices $D_0$ and $D_1$. So, replacing \eqref{yeq}
 by an approximate equation with good stability properties, the 
matrices $D_0,D_1$ can be computed using approximation arguments.
In our applications, equation \eqref{yeq} is a differential equation on the 
line, while the respective approximation is a differential equation on a 
finite segment with appropriate boundary conditions. We will discuss the
errors of this approximation in Sections \ref{s:approxop} and \ref{s:converge} below. 
In addition, there are errors caused by
approximating the contour integrals \eqref{defB01} by a quadrature
rule, e.g. the trapezoid sum. This error depends on the number of
quadrature points and on the distance of eigenvalues to the contour.
 For a detailed  analysis 
in case of analytic contours $\Gamma$ we refer to \cite{B12}. In any rate, we may assume in what follows that the matrices $E(\lambda)$, $D_0$, $D_1$ are known to us.
\hfill$\Diamond$\end{remark}
\begin{remark} \lb{remB02}
At first sight it seems unneccesarily general to have different
dimensions $m$ and $\ell$  that may also differ from $\k$. However,
it is important for practical computations. First of all, $\k$ is generally
unknown and thus a quantity to be determined. Our approach will only require
$m,\ell\ge \k$ which can be achieved by increasing $m$ and $\ell$. Second,
the number $\ell$ determines the number of equations \eqref{yeq} to
be solved and hence the numerical effort. Therefore we want to keep
it as small as possible. Finally, $m$ can be very large since it gives
the number of functionals that we can evaluate on the solutions of
\eqref{yeq}. In principle we can assume to know $y_j(\l)$ exactly
and hence all values $w^\top y_j(\l), w\in \cK'$. In this case 
we may think of $E(\l)$ being a matrix with infinitely long columns,
see Example \ref{ex:compactf}. 
\hfill$\Diamond$\end{remark}

\subsection{Simple eigenvalues inside the contour}
\lb{s:simpleev}
In this subsection, in addition to Hypothesis \ref{hyp0}, we assume that $F$ has only simple eigenvalues inside
the contour $\Gamma$. That is, for some $\k\ge0$,
\begin{equation} \label{evint}
 \sigma(F)\cap \Omega_0=\{\l_{1},\ldots \l_{\k} \},
\end{equation}
$\dim \cN(F(\l_n))=1$, and there are eigenvectors $v_n\in \cH$ of the operator $F(\l_n)$, the eigenvectors $w_n\in \cK'$ of the dual operator $F(\l_n)'$ such that, cf.\ \cite[Def.1.7.1]{MM},
\begin{equation} \lb{simplevw}
F(\l_k)v_n=0, \quad w_n^\top F(\l_n)=0, \quad  w_n^\top F'(\l_n) v_n \neq 0,
\quad n=1,\ldots,\k.
\end{equation}
In the following it will be convenient to normalize $v_n,w_n$ such that
\begin{equation} \label{normalize}
w_n^\top F'(\l_n)v_n =1, \quad n=1,\ldots,\k.
\end{equation}

We recall the well-known Keldysh formula, see \cite[Thm.1.6.5]{MM}:
\beq\lb{KF}F(\l)^{-1}=\sum_{n=1}^\k \frac{1}{\l-\l_n}v_n w_n^\top +H(\l),\;
\l\in\Omega_0\setminus\{\l_1,\dots,\l_\k\},\enq
where the normalization \eqref{normalize} is assumed and
$H(\cdot)$ is a holomorphic function on a neighborhood $\cU$ of $\Omega_0$ with values in 
$\cL(\cK,\cH)$. We also use the notation from \eqref{dyadic}.

Using \eqref{KF} in \eqref{defE} yields for $j=1,\ldots,m$, $k=1,\ldots,\ell$
the following formula for the entries of the matrix $E(\l)$:
\begin{equation} \label{eexpress}
E_{jk}(\l)=\sum_{n=1}^{\varkappa}\frac{1}{\l-\l_n} \langle \widehat{w}_j,v_n\rangle
\langle w_n,\widehat{v}_k \rangle + \langle \widehat{w}_j,H(\l)\widehat{v}_k(\l)\rangle.
\end{equation}
We write this in matrix form by introducing 
the rectangular matrices $G_l\in \C^{m,\k}$ and $G_r\in \C^{\ell,\k}$,
\begin{equation}\label{defGrl}
 G_l=\Big(\langle \widehat{w}_j, v_n\rangle \Big)_{j=1, n=1}^{m,\k}, \quad
G_r=\Big(\langle w_n, \widehat{v}_k\rangle\Big)_{k=1, n=1}^{\ell,\k},
\end{equation}
the diagonal matrix $\Lambda=\diag\{\l_1,\dots,\l_\k\}$, and the
matrix function
\[  H_0(\l) = \left(\langle \widehat{w}_j,H(\l)\widehat{v}_k(\l)\rangle\right)_{j=1,k=1}^{m,\ell}.\]
 Then \eqref{eexpress} has the matrix form
\begin{equation} \label{Ematrep}
E(\l)= G_l(\l I_{\varkappa}-\Lambda)^{-1} G_r^{\top}+ H_0(\l), \quad 
\l \in \Omega,
\end{equation}
where $H_0 \in \bbH(\cU,\C^{m,\ell})$.
Using  Cauchy's Theorem,
 we evaluate the matrix $D_0$ from \eqref{defB01} as follows:
\begin{equation} \label{FB0} 
D_0=\frac{1}{2 \pi i}\int_\G G_l(\l I_{\varkappa}-\Lambda)^{-1} G_r^{\top} d\l
=G_l G_r^\top.
\end{equation}
A similar calculation yields
\beq\lb{FB1} D_1=G_l \Lambda G_r^\top.\enq
Our standing assumption in the following will be that both matrices
$G_l$ and $G_r$  have rank $\varkappa$. As we  noted in Remark \ref{remB02} 
this requires to have
$\ell\ge \k$ and $ m \ge \k$.
Equation \eqref{FB0}  then implies the following formula for 
the number of eigenvalues  $\l_k$ enclosed by $\G$:
\beq\lb{vkrank}\k=\rank D_0.\enq
Thus, cf.\ Remark \ref{remB01}, one can compute the number $\k$ from a rank test
of $D_0$ and this should be sufficiently robust to approximation arguments. 
 
Next, we show how $D_1$ can be used to evaluate the actual location of the 
eigenvalues $\l_n$ enclosed by $\G$. And this procedure should be
robust to approximation arguments as well.

Let $\s_1,\dots,\s_\k$ denote the  nonzero singular values of the matrix
$D_0$, and introduce the diagonal $(\k\times\k)$ matrix
$\Sigma_0=\diag\{\s_1,\dots,\s_\k\}$. We  use the short form of the singular value
decomposition of $D_0$ (e.g. \cite[\S 3.2]{A05})
\begin{equation}\lb{singval}
D_0=V_0\Sigma_0W_0^\ast,\; V_0\in \C^{m,\k},\; V_0^\ast V_0=I_{\k}, \;
 W_0\in \C^{\ell,\k}, \;W_0^\ast W_0=I_{\k}.
\end{equation}
Note that here one uses adjoint matrices 
$W_0^*=\overline{W_0}^\top$
and $V_0^*=\overline{V_0}^\top$.

Due to equation  \eqref{FB0}, we have
\beq\lb{B0V} D_0=V_0\Sigma_0W_0^\ast=G_lG_r^\top.\enq
Since the columns of the matrices $G_l$ and $V_0$ span the same subspace, 
there is a nonsingular $(\k\times\k)$ matrix $S$ such that $V_0S=G_l$.
From this we obtain
\[\Sigma_0 W_0^* =S G_r^\top \quad \text{and} \quad  G_r^\top =S^{-1}\Sigma_0 W_0^*.
\]
Using this in equation \eqref{FB1} we find
\[ D_1= V_0S \Lambda S^{-1} \Sigma_0 W_0^*.
\]
 Multiplying the last equation by $V_0^\ast$ from the left and by 
$W_0\Sigma_0^{-1}$ from the right, yields
\beq\lb{eqfL}
S\Lambda S^{-1}=V_0^\ast D_1W_0\Sigma_0^{-1} =: D,\enq
which, in turn, implies the desired formula for the  $\l_k$'s:
\beq\lb{lkf}
\{\l_1,\dots,\l_\k\}=\s(D).
\enq
We stress again that the spectrum of the matrix $D$ in  
\eqref{eqfL}, by \eqref{defE}, \eqref{defB01}, can be evaluated using only the
data from \eqref{yeq}, and thus can be obtained using approximation arguments.
We summarize our result in the following theorem.
\begin{theorem} \label{simpleeigen}
Suppose that the operator pencil $F\in \bbH(\Omega,\cF(\cH,\cK))$ has
 only simple eigenvalues 
$\lambda_1,\ldots,\lambda_{\varkappa}$ inside a simple closed contour $\Gamma$ in
$\Omega$ and no eigenvalues on the contour. Given a set of linearly independent
functionals  $\widehat{w}_j\in \cH'$, $j=1,\dots,m$, and vectors
 $\widehat{v}_k\in \cK$, $k=1,\dots,\ell$, with $m,\ell\ge \varkappa$,
let the matrices $D_0,D_1\in \mathbb{C}^{m \times \ell}$ be determined by 
\eqref{yeq},\eqref{defE},\eqref{defB01} and assume that the matrices
$G_l,G_r$ from \eqref{defGrl} have maximum rank.
Then the $\varkappa\times \varkappa$-matrix 
$D=V_0^\ast D_1W_0\Sigma_0^{-1}$, where $V_0,\Sigma_0,W_0$ are
given by the singular value decomposition \eqref{singval} of $D_0$, has
$\lambda_1,\ldots,\lambda_{\varkappa}$ as simple eigenvalues.
\end{theorem}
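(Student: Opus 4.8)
The proof will essentially assemble the computations carried out in this subsection into a single similarity transformation, so the plan is threefold: (i) turn the Keldysh formula \eqref{KF} into the matrix representation \eqref{Ematrep} of $E(\l)$; (ii) apply Cauchy's Theorem to the integrals in \eqref{defB01} to get $D_0=G_lG_r^\top$ and $D_1=G_l\Lambda G_r^\top$; and (iii) use the maximal-rank hypothesis on $G_l,G_r$ together with the short singular value decomposition \eqref{singval} of $D_0$ to exhibit an invertible $S\in\C^{\k,\k}$ with $D=S\Lambda S^{-1}$, where $\Lambda=\diag\{\l_1,\dots,\l_\k\}$.

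For step (i) I would substitute \eqref{KF} into the defining formula \eqref{defE}: since the pole part of $F(\l)^{-1}$ on $\Omega_0$ is the sum of the rank-one operators $(\l-\l_n)^{-1}v_nw_n^\top$ and the remainder $H(\cdot)$ is holomorphic on a neighborhood of $\Omega_0$ containing $\G$, bilinearity of the dual pairing gives exactly $E(\l)=G_l(\l I_\k-\Lambda)^{-1}G_r^\top+H_0(\l)$ with $H_0\in\bbH(\cU,\C^{m,\ell})$, i.e. \eqref{Ematrep}. For step (ii), $\G$ encloses precisely $\l_1,\dots,\l_\k$ and nothing else, so Cauchy's Theorem gives $\frac1{2\pi i}\int_\G(\l I_\k-\Lambda)^{-1}\,d\l=I_\k$ and $\frac1{2\pi i}\int_\G\l(\l I_\k-\Lambda)^{-1}\,d\l=\Lambda$, while $\frac1{2\pi i}\int_\G H_0(\l)\,d\l=0=\frac1{2\pi i}\int_\G\l H_0(\l)\,d\l$ since $H_0$ is holomorphic on and inside $\G$; hence \eqref{FB0} and \eqref{FB1}.

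For step (iii), since $G_r\in\C^{\ell,\k}$ has rank $\k$, the map $G_r^\top\colon\C^\ell\to\C^\k$ is onto, so $\cR(D_0)=\cR(G_lG_r^\top)=\cR(G_l)$; as $G_l\in\C^{m,\k}$ also has rank $\k$, this range is $\k$-dimensional, so $\rank D_0=\k$ and $D_0$ indeed has exactly $\k$ nonzero singular values, as presupposed in \eqref{singval}. The columns of $V_0$ and of $G_l$ are then two bases of the same $\k$-dimensional subspace $\cR(D_0)\subseteq\C^m$, so there is a unique invertible $S\in\C^{\k,\k}$ with $V_0S=G_l$. Substituting into $V_0\Sigma_0W_0^*=D_0=G_lG_r^\top=V_0SG_r^\top$ and multiplying by $V_0^*$ on the left (using $V_0^*V_0=I_\k$) yields $\Sigma_0W_0^*=SG_r^\top$, hence $G_r^\top=S^{-1}\Sigma_0W_0^*$ because $\Sigma_0$ is invertible. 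Then $D_1=G_l\Lambda G_r^\top=V_0S\Lambda S^{-1}\Sigma_0W_0^*$; multiplying by $V_0^*$ on the left and by $W_0\Sigma_0^{-1}$ on the right, and using $V_0^*V_0=I_\k$, $W_0^*W_0=I_\k$, gives $D=V_0^*D_1W_0\Sigma_0^{-1}=S\Lambda S^{-1}$. Thus $D$ is similar to the diagonal matrix $\Lambda$, so $\s(D)=\{\l_1,\dots,\l_\k\}$ with each $\l_n$ a simple eigenvalue, which is the claim.

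The only step requiring genuine care is the identification $\cR(D_0)=\cR(G_l)$ and the consequent fact that $V_0$ and $G_l$ differ by an \emph{invertible} $S$: this is precisely where both rank hypotheses on $G_l,G_r$ and the nondegeneracy of $\Sigma_0$ enter, and it is what makes the reduced matrix $D$ well defined and similar to $\Lambda$. Everything else is routine bookkeeping with Cauchy's Theorem and the orthonormality relations $V_0^*V_0=W_0^*W_0=I_\k$ of the SVD factors.
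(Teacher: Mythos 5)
Your proposal is correct and follows essentially the same route as the paper: Keldysh's formula gives the representation \eqref{Ematrep}, Cauchy's Theorem yields \eqref{FB0}--\eqref{FB1}, and the invertible matrix $S$ with $V_0S=G_l$ produces the similarity $D=S\Lambda S^{-1}$. Your explicit verification that $\mathcal{R}(D_0)=\mathcal{R}(G_l)$ and hence $\rank D_0=\varkappa$ is a welcome piece of bookkeeping that the paper leaves implicit.
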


As soon as $\Lambda$ is determined, one can view \eqref{eqfL}, that is, the equation
\beq\lb{eqfL2} DS=S\Lambda,\enq
as an equation for the corresponding to $\l_n$ eigenvectors of the matrix $D$, which are the columns of the matrix $S$.
With $S$ known we can compute $G_l=V_0S$, i.e. the values
$\langle \widehat{w}_j,v_n\rangle$ are available. From this we determine good 
approximations of eigenvectors $v_n,n=1,\ldots,\k$, as follows:\\
{\bf Step 1:} Select  vectors $\widehat{u}_k\in \cH,k=1,\ldots,m$, that are
biorthogonal to the $\widehat{w}_j$'s, 
\begin{equation} \lb{biorth}
\langle \widehat{w}_j, \widehat{u}_k \rangle = \delta_{jk}, \quad j,k=1,\ldots,m.
\end{equation}
{\bf Step 2:}
Determine coefficients 
$\beta_{k,n},k=1,\ldots,m$, which minimize
\begin{equation} \label{minfunc}
  \Phi(\beta)=\sum_{j=1}^m | \langle \widehat{w}_j, v_n - 
\sum_{k=1}^m \beta_{k,n}\widehat{u}_k \rangle|^2.
\end{equation}
Due to \eqref{biorth} the minimum is attained at
 $\beta_{k,n}=\langle \widehat{w}_k,v_n \rangle = (G_l)_{k,n}$.\\
{\bf Step 3:}
Determine approximate eigenvectors of the operator $F(\l_n)$ from
\begin{equation} \label{vapprox}
v_{n}^{\mathrm{approx}}=\sum_{k=1}^{m} (G_l)_{k,n} \widehat{u}_k, \quad n=1,\ldots,\k.
\end{equation}
If $\cH$ is a Hilbert space then we can identify $\widehat{w}_j=\widehat{u}_j, j=1,\ldots,m$, so that \eqref{biorth} requires these vectors to form an orthonormal
system. 
Introducing the subspace $X_m = \mathrm{span}\{\widehat{u}_1,\ldots, \widehat{u}_m \}$ we find that  $\Phi(\beta)$ in \eqref{minfunc} agrees 
with $\|v_n-\sum_{k=1}^m \beta_{k,n}\widehat{u}_k\|^2$ up to a constant.
Hence $v_n^{\mathrm{approx}}$ is the best approximation of $v_n$ in the
subspace $X_m$.
If the functions $\widehat{u}_j$ are not orthonormal, one can find the
best approximation of $v_n$ in $X_m$ by first solving for $\beta_{j,n}$ the linear
system
\[ \sum_{j=1}^{m} \langle \widehat{u}_k,\widehat{u}_j \rangle_{\mathcal{H}} 
\beta_{j,n} = \langle \widehat{w}_k, v_n \rangle_{\mathcal{H}},
\quad k=1,\ldots,m, \, n=1,\dots,\k,
\]
and then setting
\begin{equation} \label{vapprox2}
v_{n}^{\mathrm{approx}}=\sum_{j=1}^{m} \beta_{j,n} \widehat{u}_j, \quad n=1,\ldots,\k.
\end{equation}

\subsection{Multiple eigenvalues inside the contour}
\label{s:multipleev}
In this section we show that multiple eigenvalues do not produce serious 
problems
with the contour method. In Theorem \ref{multieigen} we prove that the 
matrix $D$ from \eqref{eqfL}  
 inherits the multiplicity structure of the original
nonlinear problem. This is analogous to the behavior of the Evans function 
(see \cite{AGJ90}), see Section \ref{ss:defev} for more details. 
Let us first recall the definition of multiplicity of eigenvalues for  nonlinear pencils and the associated notion of chains of eigenvectors
(see \cite[Sec.I.1.6]{MM}, \cite{V76}).
\begin{definition} \label{CGE}
Let $F\in \bbH(\Omega,\cF(\cH,\cK))$ and $\lambda_0 \in \sigma(F)$.

(i)\,  A tuple $(v_0,\ldots,v_{n-1})\in \cH^n, n\ge 1$, is called a
chain of generalized eigenvectors (CGE) of $F$ at $\l_0$ if the polynomial
$v(\l)=\sum_{j=0}^{n-1} (\l-\l_0)^j v_j$ satisfies
\[ (Fv)^{(j)}(\l_0)=0, \quad j=0,\ldots,n-1.
\]
The order of the chain is the index $r_0$ $(\ge n)$ satisfying
\[ (Fv)^{(j)}(\l_0)=0, \quad j=0,\ldots,r_0-1, \quad
(Fv)^{(r_0)}(\l_0)\neq0 .
\]
The rank $r(v_0)$ of a vector $v_0\in \cN(F(\l)), v_0 \neq 0$,
is the maximum order of CGEs starting at $v_0$.

(ii)\, A canonical system of generalized eigenvectors (CSGE)
of $F$ at $\l_0$ is a  system of vectors 
\[ v_{j,p}\in \cH, \quad j=0,\ldots,\mu_p-1, \; p=1,\ldots,q, \; q\ge 1,
\]
with the following properties:
\begin{enumerate}
\item $v_{0,1},\ldots,v_{0,q}$ form a basis of $\cN(F(\l_0))$,
\item the tuple $(v_{0,p},\ldots,v_{\mu_p-1,p})$ is a CGE of $F$ at $\l_0$
for $p=1,\ldots,q$,
\item for $p=1,\ldots,q$ the indices $\mu_p$ satisfy \\
$\mu_p= \max\{r(v_0): v_0 \in \cN(F(\l_0)) \setminus
\mathrm{span}\{v_{0,\nu}:1\le \nu <p \}\}$. 
\end{enumerate}

(iii)\, The numbers $\mu_p$, $p=1,\dots,q$, are called the {\em partial
multiplicities}, where $\mu_1+\dots+\mu_q$ is the {\em
algebraic multiplicity} and $q=\dim \cN(F(\l_0))$  is the {\em geometric multiplicity}. The subspace 
$\mathrm{span}\{v_{j,p}:j=0,\ldots,\mu_p-1, p=1,\ldots,q \}$ is called the {\em root subspace} of the eigenvalue $\l_0$.
\end{definition}
Such a CSGE always exists and, as with usual Jordan chains for matrices, condition (ii)(3) guarantees that chains of 
highest order are taken first, so that
$\mu_1 \ge \mu_2 \ge \ldots \ge \mu_p$ holds.  

Next we state the general formula of Keldysh (\cite[Thm.1.6.5]{MM})
for a finite number of eigenvalues inside a given contour (cf. \cite[Cor.2.8]{B12} for this generalization).

\begin{theorem} \label{generalKeldysh}
Let $F\in \bbH(\Omega,\cF(\cH,\cK))$ and 
let $\Gamma\subset \rho(F)$ be a simple closed contour with bounded interior
$\Omega_0$  in $\Omega$.
Let $\Omega_0 \cap \s(F)= \{\l_1,\ldots,\l_{\varkappa}\}$ and consider for each
 $\l_n$, $n=1,\ldots,\varkappa$, a CSGE denoted by
\[ v_{j,p}^n \in \cH, \quad j=0,\ldots,\mu_{n,p}-1, \quad p=1,\ldots, q_n, \quad
n=1,\ldots,\varkappa. \]
Then there exist corresponding CSGEs
\[ w_{j,p}^n\in \cK', \quad j=0,\ldots,\mu_{n,p}-1, \quad p=1,\ldots, q_n, \quad
n=1,\ldots,\varkappa, \]
for $F^{\top}\in \bbH(\Omega,\cF(\cK',\cH'))$, an open set $\cU$ with 
$\Omega_0\cup\Gamma \subset \cU \subset \Omega$ and a function 
$H\in \bbH(\cU,\cF(\cK,\cH))$ such that
\begin{equation}
\label{Keldyshformula}
F(\l)^{-1}= 
\sum_{n=1}^{\varkappa} \sum_{p=1}^{ q_n} \sum_{j=1}^{\mu_{n,p}} (\l-\l_n)^{-j}
\sum_{\nu=0}^{\mu_{n,p}-j} v_{\nu,p}^{n} w_{\mu_{n,p}-j-\nu}^{n \top} + H(\l),
\, \l\in \cU\setminus \s(F).
\end{equation}
\end{theorem}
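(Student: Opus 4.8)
The plan is to deduce Theorem \ref{generalKeldysh} from the scalar/matrix theory of finitely meromorphic operator functions by localizing at each eigenvalue and then summing the principal parts. First I would recall that, under Hypothesis \ref{hyp0}, $F^{-1}(\cdot)$ is finitely meromorphic on $\Omega$ in the sense of \cite[Ch.IX]{GGK}, so in a punctured neighborhood of each $\l_n$ it has a Laurent expansion $F(\l)^{-1}=\sum_{j=1}^{r_n}(\l-\l_n)^{-j}B_{n,j}+H_n(\l)$ with finite-rank coefficients $B_{n,j}\in\cL(\cK,\cH)$ and $H_n$ holomorphic near $\l_n$. Setting $H(\l):=F(\l)^{-1}-\sum_{n=1}^{\varkappa}\sum_{j=1}^{r_n}(\l-\l_n)^{-j}B_{n,j}$ gives a function that is holomorphic on an open set $\cU$ with $\Omega_0\cup\Gamma\subset\cU\subset\Omega$ (remove the finitely many poles and shrink), and since the principal parts are finite rank while $F(\l)^{-1}$ is Fredholm-valued, $H\in\bbH(\cU,\cF(\cK,\cH))$. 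So the whole content of the theorem is the identification of the single-pole principal part at a fixed $\l_0$ in terms of a CSGE of $F$ and a correspondingly chosen CSGE of $F^{\top}$.

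For that local identification I would invoke the Keldysh theorem exactly as stated in \cite[Thm.1.6.5]{MM}: given a CSGE $(v_{j,p})$ of $F$ at $\l_0$, there is a CSGE $(w_{j,p})$ of $F^{\top}$ at $\l_0$ such that the principal part of $F(\l)^{-1}$ at $\l_0$ equals
\[
\sum_{p=1}^{q}\sum_{j=1}^{\mu_p}(\l-\l_0)^{-j}\sum_{\nu=0}^{\mu_p-j}v_{\nu,p}\,w_{\mu_p-j-\nu}^{\top}.
\]
The only genuine subtlety is that the theorem in \cite{MM} is phrased for a \emph{given} CSGE of $F$, whereas here we start from a CSGE we have fixed for our own purposes; but this is precisely how \cite[Thm.1.6.5]{MM} is formulated, and the accompanying CSGE $(w_{j,p})$ of $F^{\top}$ is produced from it, so no extra work is needed. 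One then checks that the double/triple-indexed sum above, when the CGE $v(\l)=\sum_\nu(\l-\l_0)^\nu v_{\nu,p}$ and the dual CGE are paired via the ``convolution'' $\sum_{\nu}v_{\nu,p}w_{\mu_p-j-\nu}^{\top}$, indeed produces a pole of order exactly $\mu_p$ and that the rank-one building blocks $v_\nu w_\mu^{\top}$ are the ones defined in \eqref{dyadic}; this is the matrix-style bookkeeping already familiar from \eqref{KF} in the simple case, and it is routine.

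Finally I would assemble the pieces: apply the local statement at each $\l_n$ with the prescribed CSGE $v_{j,p}^n$, obtaining CSGEs $w_{j,p}^n$ of $F^{\top}$ and the associated principal parts; define $H(\l)$ as the difference of $F(\l)^{-1}$ and the sum of these principal parts over $n=1,\dots,\varkappa$; and verify $H\in\bbH(\cU,\cF(\cK,\cH))$ on a common $\cU$ as above, using that a finite-rank perturbation of a Fredholm operator of index $0$ is again Fredholm of index $0$ and that, away from $\sigma(F)$, $H(\l)=F(\l)^{-1}+(\text{finite rank})$. This yields \eqref{Keldyshformula} verbatim. The main obstacle, such as it is, is purely organizational: making sure the CSGE of $F$ we feed into \cite[Thm.1.6.5]{MM} at each $\l_n$ is the very one named in the statement and that the partial multiplicities $r_n$ appearing in the Laurent expansion coincide with $\max_p\mu_{n,p}=\mu_{n,1}$, so that the truncation in $j$ up to $\mu_{n,p}$ (rather than a uniform $r_n$) is legitimate — i.e. that terms with $j>\mu_{n,p}$ for the shorter chains simply do not occur. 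This is a standard fact about canonical systems (the pole order of $F^{-1}$ at $\l_n$ equals the largest partial multiplicity), and citing \cite[Sec.I.1.6]{MM} suffices.
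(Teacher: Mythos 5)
Your argument is correct and follows exactly the route the paper itself takes: the paper states this theorem as a recalled result, citing the single-eigenvalue Keldysh theorem \cite[Thm.1.6.5]{MM} together with \cite[Cor.2.8]{B12} for precisely the localization-and-summation of principal parts that you carry out. The only point you gloss over slightly is the Fredholmness of $H(\l)$ \emph{at} the eigenvalues $\l_n$ themselves (your finite-rank-perturbation argument covers only $\l\notin\sigma(F)$), but this is supplied by the cited local theorem and does not affect the structure of the proof.
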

\begin{remark}
The dual CSGEs are uniquely determined by an orthogonality condition
which generalizes \eqref{simplevw}, see \cite[Thm.1.6.5]{MM}. We omit
these conditions here since they will not be used it in the sequel. \hfill$\Diamond$
\end{remark}
With Keldysh' formula \eqref{Keldyshformula} and Cauchy's formula  
we repeat the calculations that lead to \eqref{FB0},
\eqref{FB1}.
The result is
\begin{equation} \label{FB2}
D_0 = G_l G_r^{\top}, \quad D_1= G_l \Lambda G_r^{\top},
\end{equation}
where the matrices $G_l,G_r$ are of size $m\times\varkappa_0$
and $\ell \times \varkappa_0$, respectively with
\begin{equation} \label{kappazero}
\varkappa_0 = \sum_{n=1}^{\varkappa} \sum_{p=1}^{ q_n} \mu_{n,p}.
\end{equation}
More explicitly, we have
\begin{equation} \label{Glform}
(G_l)_{j,(n,p,\nu)} = \langle \widehat{w}_j , v_{\nu,p}^n \rangle,
 \begin{array}{ll}  j=1,\ldots,m, \\
 \nu=0,\ldots,\mu_{n,p}-1,\;  p=1,\ldots, q_n,
\; n=1,\ldots,\varkappa.
\end{array}
\end{equation}
\begin{equation} \label{Grform}
(G_r)_{j,(n,p,\nu)} = \langle w_{\mu_{n,p}-\nu-1} , \widehat{v}_j \rangle,
 \begin{array}{ll}  j=1,\ldots,\ell, \\
 \nu=0,\ldots,\mu_{n,p}-1,\;  p=1,\ldots, q_n,
\; n=1,\ldots,\varkappa,
\end{array}
\end{equation}
where one may think of the triples $(n,p,\nu)$ being ordered
lexicographically.
Moreover, it turns out that $\Lambda$ is of Jordan normal form
\begin{eqnarray} \label{jnf1}
\Lambda = \begin{pmatrix} J_1 & & \\
& \ddots & \\
& & J_{\varkappa} 
\end{pmatrix},
J_n =\begin{pmatrix} J_{n,1} & & \\
& \ddots & \\
& & J_{n, q_n}
\end{pmatrix}, \\ \label{jnf2}
J_{n,p} =\begin{pmatrix} \l_n & 1 & & \\
& \ddots & \ddots & \\
& & \l_n & 1 \\
& & & \l_n
\end{pmatrix}  \in \C^{\mu_{n,p}\times \mu_{n,p}}.
\end{eqnarray}
This immediately leads to the following generalization
of Theorem \ref{simpleeigen}.

\begin{theorem} \label{multieigen}
Let $F\in \bbH(\Omega,\cF(\cH,\cK))$ and 
let $\Gamma\subset \rho(F)$ be a simple closed contour with bounded interior
$\Omega_0$  in $\Omega$.
Let $\Omega_0 \cap \s(F)= \{\l_1,\ldots,\l_{\varkappa}\}$ and denote 
the CSGEs associated with $\l_n$ by
\[ v_{j,p}^n, \quad j=0,\ldots,\mu_{n,p}-1, \quad p=1,\ldots, q_n, \quad
n=1,\ldots,\varkappa. \]
Consider linearly independent elements
 $\widehat{w}_j\in \cH'$, $j=1,\dots,m$, and $\widehat{v}_k\in \cH$, 
$k=1,\dots,\ell$, such that $m,\ell\ge \varkappa_0$, see \eqref{kappazero}.
Determine the matrices $D_0,D_1\in \mathbb{C}^{m \times \ell}$  by 
\eqref{yeq},\eqref{defE},\eqref{defB01} and assume that the matrices
$G_l,G_r$ from \eqref{Glform}, \eqref{Grform} have maximum rank.
Then the $\varkappa_0\times \varkappa_0$-matrix 
$D=V_0^\ast D_1W_0\Sigma_0^{-1}$, where $V_0,\Sigma_0,W_0$ are
given by the singular value decomposition \eqref{singval} of $D_0$, has
Jordan normal form \eqref{jnf1}, \eqref{jnf2} which coincides with 
the multiplicity structure of the spectrum of the nonlinear operator
inside $\Gamma$.
\end{theorem}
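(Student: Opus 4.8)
The plan is to repeat, essentially verbatim, the computation carried out between \eqref{KF} and \eqref{lkf} that leads to Theorem~\ref{simpleeigen}, the only change being that the diagonal matrix of eigenvalues is replaced by the Jordan matrix $\Lambda$ of \eqref{jnf1}, \eqref{jnf2}. Thus the first step is to establish the structural identities \eqref{FB2}, \eqref{Glform}--\eqref{jnf2}: one substitutes Keldysh' formula \eqref{Keldyshformula} from Theorem~\ref{generalKeldysh} into $E_{jk}(\l)=\langle\widehat w_j,F(\l)^{-1}\widehat v_k\rangle$ and, for each eigenvalue $\l_n$ and each chain $p$, collects the partial fraction contribution $\sum_{i=1}^{\mu_{n,p}}(\l-\l_n)^{-i}\sum_{\nu=0}^{\mu_{n,p}-i}\langle\widehat w_j,v^n_{\nu,p}\rangle\langle w^n_{\mu_{n,p}-i-\nu,p},\widehat v_k\rangle$. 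Reindexing the inner sum by the dual index $\s=\mu_{n,p}-i-\nu$ and labelling the block column carrying $w^n_{\s,p}$ by $\mu_{n,p}-1-\s$, exactly as in \eqref{Grform}, one recognizes this contribution as $G_l^{(n,p)}(\l I-J_{n,p})^{-1}(G_r^{(n,p)})^{\top}$, because $(\l I-J_{n,p})^{-1}=\sum_{i\ge0}(\l-\l_n)^{-i-1}N^{i}$ for the nilpotent shift $N$ of order $\mu_{n,p}$. Assembling the blocks gives $E(\l)=G_l(\l I_{\varkappa_0}-\Lambda)^{-1}G_r^{\top}+H_0(\l)$ with $G_l$, $G_r$, $\Lambda$ as in \eqref{Glform}--\eqref{jnf2} and $H_0\in\bbH(\cU,\C^{m,\ell})$; Cauchy's theorem applied one Jordan block at a time then yields $\frac1{2\pi i}\int_\G(\l I_{\varkappa_0}-\Lambda)^{-1}\,d\l=I_{\varkappa_0}$ and $\frac1{2\pi i}\int_\G\l(\l I_{\varkappa_0}-\Lambda)^{-1}\,d\l=\Lambda$, i.e. \eqref{FB2}. (This is precisely the content of the paragraph around \eqref{FB2}, so one may also simply take \eqref{FB2}, \eqref{jnf1}, \eqref{jnf2} as given and start from there.)

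The second step is the linear algebra, which is identical to the simple case. Since $G_l\in\C^{m,\varkappa_0}$ has full column rank and $G_r^{\top}\in\C^{\varkappa_0,\ell}$ has full row rank, $D_0=G_lG_r^{\top}$ has rank exactly $\varkappa_0$, so the short singular value decomposition \eqref{singval} furnishes $V_0\in\C^{m,\varkappa_0}$, $W_0\in\C^{\ell,\varkappa_0}$ and an invertible diagonal $\Sigma_0$. Moreover $\mathcal R(G_l)=\mathcal R(G_lG_r^{\top})=\mathcal R(D_0)=\mathcal R(V_0)$ and both $G_l$ and $V_0$ have full column rank $\varkappa_0$, so there is a nonsingular $S\in\C^{\varkappa_0,\varkappa_0}$ with $G_l=V_0S$ (explicitly $S=V_0^{*}G_l$). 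Substituting into $V_0\Sigma_0W_0^{*}=D_0=V_0SG_r^{\top}$ and using $V_0^{*}V_0=I_{\varkappa_0}$ gives $G_r^{\top}=S^{-1}\Sigma_0W_0^{*}$.

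Finally, plugging these two identities into $D_1=G_l\Lambda G_r^{\top}$ gives $D_1=V_0\,S\Lambda S^{-1}\,\Sigma_0W_0^{*}$; multiplying on the left by $V_0^{*}$ and on the right by $W_0\Sigma_0^{-1}$ and using $V_0^{*}V_0=W_0^{*}W_0=I_{\varkappa_0}$ yields $D=V_0^{*}D_1W_0\Sigma_0^{-1}=S\Lambda S^{-1}$. Hence $D$ is similar to $\Lambda$ and carries its Jordan normal form \eqref{jnf1}, \eqref{jnf2}, and by Theorem~\ref{generalKeldysh} together with Definition~\ref{CGE} the block sizes $\mu_{n,p}$ are exactly the partial multiplicities of the eigenvalues of $F$ inside $\G$, which is the asserted coincidence of multiplicity structures. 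The one genuinely delicate point is the first step: checking that the precise pairing of generalized eigenvectors in \eqref{Keldyshformula}, after the residue calculation, reproduces $\Lambda$ in the exact Jordan form \eqref{jnf1}, \eqref{jnf2} rather than merely a matrix conjugate to it. Once \eqref{FB2} is in hand, the remainder is literally the argument of Theorem~\ref{simpleeigen} and presents no obstacle.
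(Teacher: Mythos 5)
Your proposal is correct and follows essentially the same route as the paper, which simply asserts \eqref{FB2}--\eqref{jnf2} and then invokes the argument of Theorem \ref{simpleeigen}; you have supplied the residue/reindexing computation that the paper leaves implicit, and your identification of the block contribution with $G_l^{(n,p)}(\l I-J_{n,p})^{-1}(G_r^{(n,p)})^{\top}$ via the Neumann expansion of $(\l I-J_{n,p})^{-1}$ is exactly the intended verification that $\Lambda$ comes out in the precise Jordan form \eqref{jnf1}, \eqref{jnf2}. The remaining linear algebra ($G_l=V_0S$, $G_r^{\top}=S^{-1}\Sigma_0W_0^{*}$, $D=S\Lambda S^{-1}$) coincides with the paper's treatment of the simple case.
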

\begin{remarks} 
\label{rem:jordanrobust}
(a) It is well known that the Jordan normal form is not robust to perturbations
and, therefore, not a suitable object for numerical computations.
Therefore, it seems questionable, whether one should compute approximate 
CSGEs by applying
formula \eqref{vapprox} to the columns of $G_l$ in \eqref{Glform}. 
Nevertheless, Theorem \ref{multieigen} has some significance.
In our case perturbations of the matrices 
$D_0,D_1,V_0,W_0,\Sigma_0$ may be caused
by approximate solutions of the operator equations \eqref{yeq},
by quadrature errors for the contour integrals \eqref{defB01}, or
by errors of the  singular value decomposition \eqref{singval}( see
Section \ref{s:approxop} for more details).
This leads to a perturbed matrix $D$ with  well known spectral properties,
e.g. the invariant subspaces of $D$ belonging to the eigenvalues
$\l_n,n=1,\ldots,\varkappa$, keep their dimension and perturb with the
same order, see \cite{StS90},
and the error of a single eigenvalue $\l_n$ grows at most with the power 
$1/\mu_{1,n}$ of the perturbation, where $\mu_{1,n}$ is the maximum rank
of eigenvectors belonging to $\l_n$ (cf. \cite{K80}).
\\
(b) There are several reasons that may cause a rank defect for 
the matrices $G_l,G_r$ in  \eqref{defGrl} and \eqref{Glform},\eqref{Grform},
respectively. First, it is possible that $F$ has  more eigenvalues
inside the contour than the dimension of the space $\cH$. For example,
this is typical for characteristic equations of ordinary
delay equations. However, this cannot occur with infinite dimensional
spaces $\cH,\cK$ which is our main concern here.
Second, there may be a vector $\sum_{n=1}^{\varkappa}\alpha_n v_n$ that is
annihilated by all test functionals $\widehat{w}_j$ or a functional 
$\sum_{j=1}^{m}\beta_j w_j$ that annihilates all test functions
$\widehat{v}_k$ (similarly for CSGEs). If the eigenfunctions $v_n$
and $w_n$ are linearly independent and the data $\widehat{w}_j,\widehat{v}_k$
are chosen at random we consider this case to be nongeneric.
However, it is possible for nonlinear eigenvalue problems that eigenvectors
belonging to different eigenvalues are linearly dependent 
(see \cite[Sec.4,5]{B12} for such an example). The contour method can be
extended to handle all these degenerate cases by evaluating higher
order moments 
\[\frac{1}{2 \pi i}\int_{\G}\l^{\nu} E(\l) d\l,\quad \nu=0,\ldots 2K-1.\]
It is shown in \cite[Lem.5.1]{B12} that it suffices to take
$K= \sum_{n=1}^{\varkappa} \mu_{n,1}$ under the conditions of Theorem \ref{multieigen}. \hfill$\Diamond$
\end{remarks}

 \subsection{Approximation of operators}
\label{s:approxop}
We consider a sequence of approximate operators 
$F_N\in \bbH(\Omega, \cF(\cH_N,\cK_N)), N\in \N$, and study the errors
for the  linear system \eqref{yeq} and 
the spectrum $\s(F)$ when $F$ is replaced by $F_N$.
 We will
use the framework of discrete approximations (see \cite{V76} and \cite{V80}
for an English reference) which has the advantage that $\cH_N,\cK_N$
are general Banach spaces (not necessarily subspaces of $\cH,\cK$) connected
to $\cH,\cK$ only via a set of linear operators (not necessarily projections)
\[ p_N : \cH \mapsto \cH_N, \quad q_N: \cK \mapsto \cK_N, \quad N\in \N.
\]
In Section \ref{s:converge} we will apply the theory to boundary value problems
on the infinite line when approximated by two-point boundary value problems
on a bounded interval.
In order to assist readers unfamiliar with the theory we impose conditions
slightly stronger than necessary, and  we try to avoid as many
notions as possible from \cite{V76}. Our assumptions are as follows:
\begin{enumerate}
\item[(D1)]
There exist Banach spaces $\cH_N,\cK_N, N \in \N$ and linear bounded mappings
$p_N\in \cL(\cH,\cH_N),q_N\in \cL(\cK,\cK_N)$ with the property
\[ \lim_{N\rightarrow \infty} \|p_N v \|_{\cH_N}= \|v\|_{\cH},\; v \in \cH,\quad
\lim_{N\rightarrow \infty} \|q_N v \|_{\cK_N}= \|v\|_{\cK}, v \in \cK.
\]
\item[(D2)] Given $F\in \bbH(\Omega,\cF(\cH,\cK))$ with  $\rho(F) \neq \emptyset$ and
$F_N\in \bbH(\Omega, \cF(\cH_N,\cK_N)), N\in \N$ with
$\sup_{N\in \N} \sup_{\l \in \cC} \| F_N(\l)\| < \infty$ for every compact
set $\cC \subset \Omega$.
\item[(D3)] $F_N(\l)$ converges regularly to $F(\l)$ for all $\l \in \Omega$
in the following sense:
\begin{enumerate}
\item $\lim_{N\rightarrow \infty}\| F_N(\l)p_Nv - q_N F(\l)v\| =0, \quad v \in \cH$,
\item for any subsequence  $v_N \in \cH_N, N\in \N'\subset \N$ with
 $\|v_N\|_{\cH_N}, N\in \N'$ bounded
and  $\lim_{\N' \ni N\rightarrow \infty}\|F_N(\l)v_N - q_N y \|_{\cK_N}=0$ for
some $y \in \cK$, there exists a subsequence $\N'' \subset \N'$ and
a $v\in \cH$ such that \\ $\lim_{\N'' \ni N \rightarrow \infty} \|v_N - p_N v\|_{\cH_N}
=0$.
\end{enumerate}
\end{enumerate}
The term $\| F_N(\l)p_Nv - q_N F(\l)v\|$ is called the {\it consistency error}
since it measures the defect in the noncommuting diagram.

\begin{equation}
\begin{CD} \cH@>F(\l)>> \cK\\
 @Vp_N VV @ VV q_N V\\
 \cH_N@>F_N(\lambda)>> \cK_N
\end{CD}\end{equation}
The following therorem summarizes convergence results from
\cite[\S 3(3), \S 4(33)]{V76}.
\begin{theorem} \label{discreteconverge}
Under the assumptions (D1)-(D3) the following assertions hold:

(i)\, For any compact set $\cC \subset \rho(F)$ and any 
$\widehat{v}\in \cK$
there exists an $N_0\in \N$ such that the linear equation 
$F_N(\l)y_N=q_N\widehat{v}$ has a unique solution $y_N=y_N(\l)$ for 
$N\ge N_0,\l\in \cC$, and the following estimate holds
\begin{equation} \label{estimatesolution}
\sup_{\l \in \cC} \|y_N(\l)- p_N y(\l) \|_{\cH_N} \le C 
\sup_{\l \in \cC}\| F_N(\l)p_N y(\l) - q_N F(\l) y(\l) \|,
\end{equation}
where $y(\l)\in \cH$ is the unique solution to $F(\l)y=\widehat{v}$. 

(ii)\, For any $\l_0 \in \s(F)$ there exists $N_0\in \N$ and a sequence
$\l_N \in  \s(F_N), N\ge N_0$, such that $\l_N \rightarrow \l_0$
as $N \rightarrow \infty$. For any sequence $\l_N\in \s(F_N)$ with this convergence property,
and associated eigenvectors $v_N^0\in \cN(F_N(\l_N)), \|v_N^0\|_{\cH_N}=1$,
the following estimates hold:
\begin{eqnarray} \label{estvalue}
| \l_N - \l_0| &\le C \varepsilon_N^{\frac{1}{r_0}} \\
\inf_{v_0\in \cN(F(\l_0))} \|v_N^0 - p_N v_0\|_{\cH_N}  &
 \le C \varepsilon_N^{\frac{1}{r_0}},
\end{eqnarray}
where $r_0=\mu_1$ is the maximum rank of eigenvectors that belong to $\l_0$.
The quantity $\varepsilon_N$ is a consistency error defined by
 \[ \varepsilon_N= \max_{|\l -\l_0|\le \delta}\, \max_{v\in \cM}
\| F_N(\l)p_N v - q_N F(\l)v \|, 
\]
where $\cM=\mathrm{span}\{v_{j,p}:j=0,\ldots,\mu_p-1,p=1,\ldots,q \}$
is the root space associated with $\l_0$ 
(see Definition \ref{CGE}) and $\delta>0$ is chosen sufficiently small.
\end{theorem}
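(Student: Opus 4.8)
The plan is to obtain the theorem from the general convergence theory of discrete approximations in \cite{V76,V80}, by observing that assumptions (D1)--(D3) are precisely what is needed for $F_N(\lambda)$ to converge to $F(\lambda)$ \emph{regularly} (in the sense of that theory) at each fixed $\lambda\in\Omega$, and that for Fredholm pencils of index $0$ regular convergence transfers invertibility, error bounds and the spectral picture from $F$ to $F_N$. Indeed, (D1) makes $(\cH_N,p_N)$ and $(\cK_N,q_N)$ discrete approximations of $\cH$ and $\cK$; (D2) together with (D3)(a) is consistency plus equiboundedness on compacta; and (D3)(b) is the discrete compactness/regularity condition. Thus the proof splits into quoting the stability-plus-error (``discrete Lax'') theorem for part (i) and the spectral approximation theorem for holomorphic Fredholm pencils for part (ii), once the uniformity in $\lambda$ required in the statement has been secured.

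For part (i), fix a compact $\cC\subset\rho(F)$. For each $\lambda\in\cC$, $F(\lambda)$ is invertible and Fredholm of index $0$ and $F_N(\lambda)\to F(\lambda)$ regularly, so the stability theorem gives invertibility of $F_N(\lambda)$ with a locally uniform bound on $\|F_N(\lambda)^{-1}\|$ for $N$ large. To upgrade this to a bound uniform on $\cC$ I would argue by contradiction: if $\|F_N(\lambda_N)^{-1}\|\to\infty$ along a subsequence with $\lambda_N\in\cC$, pass to $\lambda_N\to\lambda_*\in\cC$, pick $z_N\in\cH_N$ with $\|z_N\|_{\cH_N}=1$ and $\|F_N(\lambda_N)z_N\|_{\cK_N}\to0$; since by (D2) and holomorphy the $F_N$ are equibounded, hence (Cauchy estimates) equicontinuous in $\lambda$ on compacta, also $\|F_N(\lambda_*)z_N\|_{\cK_N}\to0$, and (D3)(b) yields a further subsequence with $z_N\to p_Nv$ discretely, $v\neq0$, $F(\lambda_*)v=0$, contradicting $\lambda_*\in\rho(F)$. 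With $C:=\sup_{N\ge N_0}\sup_{\lambda\in\cC}\|F_N(\lambda)^{-1}\|<\infty$ in hand, set $y_N(\lambda)=F_N(\lambda)^{-1}q_N\widehat v$; using $F(\lambda)y(\lambda)=\widehat v$ gives
\[
\|y_N(\lambda)-p_Ny(\lambda)\|_{\cH_N}\le\|F_N(\lambda)^{-1}\|\,\|F_N(\lambda)p_Ny(\lambda)-q_NF(\lambda)y(\lambda)\|_{\cK_N},
\]
and taking the supremum over $\cC$ yields \eqref{estimatesolution}.

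For the qualitative part of (ii), the uniform bound from (i) first shows that no spurious eigenvalues of $F_N$ accumulate at a point of $\rho(F)$. Given $\lambda_0\in\sigma(F)$, choose a small circle $\gamma$ around $\lambda_0$ meeting $\sigma(F)$ only there; if $F_N$ had no spectrum inside $\gamma$ for infinitely many $N$, then $F_N(\cdot)^{-1}$ would be equibounded on the disk and, by regular convergence, would converge there to $F(\cdot)^{-1}$, impossible since the latter is unbounded near $\lambda_0$. Hence $\sigma(F_N)\cap\mathrm{int}(\gamma)\neq\emptyset$ for $N$ large, and shrinking the radius produces $\lambda_N\in\sigma(F_N)$ with $\lambda_N\to\lambda_0$. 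For eigenvectors, part (i) on $\gamma$ gives discrete uniform convergence of the contour operators $\frac1{2\pi i}\int_{\gamma}F_N(\lambda)^{-1}\,d\lambda$ to the corresponding residue $\frac1{2\pi i}\int_{\gamma}F(\lambda)^{-1}\,d\lambda$, which by the Keldysh formula \eqref{Keldyshformula} is a nonzero finite-rank operator built from the CSGE at $\lambda_0$; comparing ranges forces every normalized $v_N^0\in\cN(F_N(\lambda_N))$ to satisfy $\inf_{v_0\in\cN(F(\lambda_0))}\|v_N^0-p_Nv_0\|_{\cH_N}\to0$.

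The quantitative rate $\varepsilon_N^{1/r_0}$ is the delicate point and is where I expect the main work to lie. The idea is to reduce, near $\lambda_0$, the condition ``$\lambda\in\sigma(F_N)$'' to the vanishing of $\det M_N(\lambda)$ for a small holomorphic matrix function $M_N$ obtained by projecting onto the root space $\cM$ via the Keldysh decomposition, with $M_N\to M$ on $\gamma$, $\|M_N-M\|=O(\varepsilon_N)$, where $M$ has a zero at $\lambda_0$ whose order is controlled by the partial multiplicities, the largest being $r_0=\mu_1$. A Rouché / Newton-polygon argument for the perturbed holomorphic $\det M_N$ then confines its zeros to a disk of radius $C\varepsilon_N^{1/r_0}$ about $\lambda_0$, giving the first estimate in \eqref{estvalue}; tracking the corresponding null vector through the same reduction, together with $\|v_N^0\|_{\cH_N}=1$, yields the eigenvector estimate with the same exponent. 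The careful version of this reduction and the branch-point estimate for the perturbed zeros are carried out in \cite[\S3(3),\S4(33)]{V76}; the only genuinely new items here are the repackaging of the hypotheses as (D1)--(D3) and the uniform-in-$\lambda$ argument used above.
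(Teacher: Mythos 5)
Your proposal matches the paper's approach: the paper itself gives no proof but, as stated just before and in Remark~3.9, presents the theorem as a summary of \cite[\S 3(3), \S 4(33)]{V76}, noting that (D2), (D3) imply regular convergence in the sense of \cite[\S 2(17)]{V76} and that uniformity in $\lambda$ over the compact set $\cC$ is ``easy to see.'' You take exactly this route --- reduce to Vainikko's stability-plus-error and spectral approximation theorems for regularly converging Fredholm pencils --- and in addition flesh out the compactness/contradiction argument for the $\lambda$-uniform inverse bound that the paper's remark leaves implicit.
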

\begin{remark} We note that our assumptions (D2) and (D3) imply regular convergence
in the sense of \cite[\S2(17)]{V76}. Moreover, it is easy to see that
the convergence result in \cite[\S3(3)]{V76} holds uniformly 
in $\l\in \cC$. Of course, for the latter result 
continuity of $\l \mapsto F(\l)$ is sufficient.  \hfill$\Diamond$
\end{remark}

\section{Relation to the Evans function}\lb{s:evfun}
In this section we relate the abstract approach from section \ref{s:abst}
to the study of zeroes of the Evans function. Recall from the abstract setting
formula \eqref{Ematrep}, which we rewrite as
\begin{equation} \label{Erepsing}
E(\l) = \frac{1}{\cE_z(\l)} H_1(\l) + H_0(\l), \quad \l \in \Omega \setminus
\{\l_1,\ldots,\l_{\varkappa}\},
\end{equation}
where we have introduced 
\[ \cE_z(\l) = \Pi_{j=1}^{\varkappa}(\l-\l_j), \quad
H_1(\l) = G_l \diag\left(\Pi_{\nu=1,\nu\neq j}^{\varkappa}(\l-\l_{\nu}),
j=1,\ldots,\varkappa\right) G_r^{\top}.\]
Note that  $\cE_z(\l)$ may be viewed as an abstract version of the Evans function
since its zeroes are exactly the eigenvalues of $F(\l)$. Moreover,
$H_0,H_1$ are holomorphic matrix functions such that $H_1$ degenerates to
a rank one matrix at every simple eigenvalue. 

In the following we make the above relation more explicit when the classical
definition of the  Evans function is used, that is as the determinant of a 
matrix with holomorphic columns determined from appropriate subspaces.
In the first step we set up a normalized Evans function that is 
independent of a perturbative situation. 

\subsection{Definition of a normalized Evans function}
\lb{ss:defev}
In general the Evans function may be considered as a determinant that
measures the coalescence of two families of subspaces depending
holomorphically on a parameter. We define a normalized
version of the function that is unique up to a sign.
To begin, we introduce the class of matrices 
\begin{equation} \label{defuni}
\bbM^{d,k} = \{ P \in \C^{d,k}: \det(P^\top P)=1 \}
\end{equation}
and recall the following elementary fact: Let $U\subset\C^d$ be a
subspace of dimension $k$. Then two matrices $P_1,P_2\in\C^{d,k}$ of
rank $k$ satisfy $\mathcal{R}(P_1)=\mathcal{R}(P_2)=U$  if and only if there is an invertible matrix $R\in\C^{k,k}$ such that $P_2=P_1R$. 
\begin{definition}\label{defEQ}
We call two matrices $P_1,P_2\in\C^{d,k}$ equivalent and write $P_1\thickapprox P_2$ provided $P_2=P_1R$ for some $R\in\C^{k,k}$
such that $\det(R)=-1$ or $\det(R)=1$. We will use notation $[P]=\{P_1\in\C^{d,k}: P_1\thickapprox P\}$ for the equivalence class.
\end{definition}
\begin{remark}\label{P12}
If $P_1,P_2\in\bbM^{d,k}$ and 
$\mathcal{R}(P_1)=\mathcal{R}(P_2)$ then $P_1\thickapprox P_2$ holds automatically since
$1=\det(P_2^\top P_2)=\det(P_1^\top P_1)\det(R)^2=\det(R)^2$.
\hfill$\Diamond$\end{remark}

\begin{lemma}\label{prLem}
For any rank $k$ projection $\Pi$ in $\C^d$ there exist $P,\Phi \in \C^{d,k}$ such that
\begin{equation} \label{normalrep1} \Pi = P\Phi^\top,
\quad \Phi^\top P=I_{k}.
\end{equation}
Moreover, if the inequality $\det(P^\top P)\neq0$ holds for some $P$ from \eqref{normalrep1}, then it holds for any such $P$, and there exist $P_0,\Phi_0 \in \C^{d,k}$ such that
\begin{equation} \label{normalrep2} \Pi = P_0\Phi_0^\top,
\quad P_0\in \bbM^{d,k},\quad \Phi_0^\top P_0=I_{k}.
\end{equation}
\end{lemma}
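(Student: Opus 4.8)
The plan is to construct $P$ and $\Phi$ directly from the range of $\Pi$, and then obtain the normalized pair $P_0,\Phi_0$ by a single scalar rescaling.

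\emph{Construction of $P$ and $\Phi$.} Since $\Pi$ is idempotent of rank $k$, the subspace $U=\mathcal{R}(\Pi)$ has dimension $k$ and $\Pi$ acts as the identity on $U$. First I would choose any $P\in\C^{d,k}$ of rank $k$ whose columns form a basis of $U$, so that the map $c\mapsto Pc$, $\C^k\to\C^d$, is injective with image $U$. For every $v\in\C^d$ we have $\Pi v\in U$, hence $\Pi v=P\,c(v)$ for a unique $c(v)\in\C^k$; the map $v\mapsto c(v)$ is linear, so $c(v)=\Phi^\top v$ for a suitable $\Phi\in\C^{d,k}$, giving $\Pi=P\Phi^\top$. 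To get $\Phi^\top P=I_k$, multiply $\Pi=P\Phi^\top$ on the right by $P$: the columns of $P$ lie in $U$ and $\Pi|_U=\mathrm{id}$, so $\Pi P=P$, whence $P(\Phi^\top P)=(P\Phi^\top)P=\Pi P=P=P\,I_k$; injectivity of $c\mapsto Pc$ then forces $\Phi^\top P=I_k$, which is \eqref{normalrep1}.

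\emph{Independence of the determinant condition from $P$.} Suppose $\Pi=P_1\Phi_1^\top$ with $\Phi_1^\top P_1=I_k$ is any other such representation. Then $\Phi_1^\top P_1=I_k$ forces $\rank P_1=k$, and $P_1=P_1(\Phi_1^\top P_1)=(P_1\Phi_1^\top)P_1=\Pi P_1$ shows the columns of $P_1$ lie in $U$, so $\mathcal{R}(P_1)=U=\mathcal{R}(P)$. By the elementary fact recalled just before Definition \ref{defEQ}, $P_1=PR$ for an invertible $R\in\C^{k,k}$; then $P_1^\top P_1=R^\top(P^\top P)R$, so $\det(P_1^\top P_1)=\det(R)^2\det(P^\top P)$, and since $\det R\neq0$ the condition $\det(P^\top P)\neq0$ is independent of the chosen representation.

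\emph{Normalization.} Finally, assuming $\det(P^\top P)\neq0$, pick $c\in\C$ with $c^{2k}=\det(P^\top P)^{-1}$ (possible since $\det(P^\top P)\neq0$) and set $P_0=cP$, $\Phi_0=c^{-1}\Phi$. Then $\Pi=P_0\Phi_0^\top$, $\Phi_0^\top P_0=\Phi^\top P=I_k$, and $\det(P_0^\top P_0)=c^{2k}\det(P^\top P)=1$, so $P_0\in\bbM^{d,k}$, giving \eqref{normalrep2}. The argument is entirely linear-algebraic; the only point needing a little care is that $\Phi$, once chosen merely so that $P\Phi^\top=\Pi$, automatically satisfies $\Phi^\top P=I_k$ — this uses idempotency of $\Pi$ (via $\Pi P=P$) together with injectivity of $c\mapsto Pc$. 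The rescaling to $\bbM^{d,k}$ succeeds precisely because we work over $\C$, where a $2k$-th root always exists.
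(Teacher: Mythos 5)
Your proof is correct. The skeleton is the same as the paper's (a basis $P$ of $\mathcal{R}(\Pi)$, a companion $\Phi$ with $\Phi^\top P=I_k$, the observation that any two admissible $P$'s satisfy $P_1=PR$ with $R$ invertible so that $\det(P_1^\top P_1)=\det(R)^2\det(P^\top P)$, and a final rescaling), but two steps are executed differently. For the construction of $\Phi$, the paper introduces a second matrix $\widetilde{\Phi}$ whose columns span $\mathcal{R}(\Pi^\top)$, proves invertibility of $\widetilde{\Phi}^\top P$ from $\mathcal{R}(\Pi)\cap\mathcal{N}(\Pi)=\{0\}$, identifies $\Pi$ with $P(\widetilde{\Phi}^\top P)^{-1}\widetilde{\Phi}^\top$ by matching ranges and kernels of the two projections, and only then sets $\Phi=\widetilde{\Phi}(P^\top\widetilde{\Phi})^{-1}$. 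You instead define $\Phi^\top$ directly as the coordinate map $v\mapsto c(v)$ determined by $Pc(v)=\Pi v$ and deduce $\Phi^\top P=I_k$ from $\Pi P=P$ together with injectivity of $c\mapsto Pc$; this is shorter and bypasses $\mathcal{R}(\Pi^\top)$ and the projection-identification step entirely. The paper's detour is not wasted, though: it is exactly what gets recycled in the holomorphic version (Lemma \ref{holpro}), where Kato's construction supplies holomorphic bases of both $\mathcal{R}(\Pi(\l))$ and $\mathcal{R}(\Pi(\l)^\top)$ and the same algebraic formula for $\Phi$ manifestly preserves holomorphy, whereas your implicit definition of $\Phi^\top$ would need an extra argument there. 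Finally, for the normalization you rescale by a global scalar $c$ with $c^{2k}=\det(P^\top P)^{-1}$, while the paper multiplies $P$ on the right by $\diag(\det(P^\top P)^{-1/2},1,\dots,1)$, touching only the first column; both yield $\det(P_0^\top P_0)=1$ and both rely only on the existence of roots of nonzero complex numbers.
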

\begin{proof}
  Using basis vectors in $\mathcal{R}(\Pi)$ and
  $\mathcal{R}(\Pi^\top)$ as columns, we obtain rank $k$ matrices
  $P,\widetilde{\Phi}\in\C^{d,k}$
  with $\mathcal{R}(\Pi)=\mathcal{R}(P)$ and
  $\mathcal{R}(\Pi^\top)=\mathcal{R}(\widetilde{\Phi})$, respectively. 
  Standard linear algebra shows 
  $\mathcal{N}(\widetilde{\Phi}^\top)=
  \mathcal{R}(\widetilde{\Phi})^\bot= \mathcal{R}(\Pi^\top)^\bot=
  \mathcal{N}(\Pi)$, where $^\bot$ means orthogonal with respect to the
  duality pairing (see \eqref{fdual} in Section \ref{s:abst}). 

  First we obtain invertibility of $\widetilde{\Phi}^\top P\in
  \mathbb{C}^{k,k}$: For $c\in \mathbb{C}^k$ the assumption
  $\widetilde{\Phi}^\top P c = 0$ implies
  $Pc\in \mathcal{R}(P)\cap \mathcal{N}(\widetilde{\Phi}^\top)^\bot=
  \mathcal{R}(\Pi)\cap \mathcal{N}(\Pi)=\{0\}$ and hence $c=0$ since
  $P$ has full rank.
  Second, we claim that
  \begin{equation} \label{repp}
    \Pi = P (\widetilde{\Phi}^\top P)^{-1} \widetilde{\Phi}^\top, \quad
    P,\widetilde{\Phi} \in \C^{d,k}.
  \end{equation} 
  Indeed, the matrix $\widetilde{\Pi}=P
  (\widetilde{\Phi}^\top P)^{-1} \widetilde{\Phi}^\top$ is a projection
  with $\mathcal{R}(\widetilde{\Pi})=\mathcal{R}(P)=\mathcal{R}(\Pi)$
  and $\mathcal{N}(\widetilde{\Pi})=\mathcal{N}(\widetilde{\Phi}^\top)
  =\mathcal{N}(\Pi)$, yielding $\Pi=\widetilde{\Pi}$ and finishing the
  proof of \eqref{repp}. Letting 
\begin{equation}\label{normP1}
\Phi=\widetilde{\Phi}\big(P^\top\widetilde{\Phi}\big)^{-1},
\end{equation} we arrive at the representation \eqref{normalrep1}.

If $P,\widetilde{P}$ are two matrices as in \eqref{normalrep1}, then
$P=\widetilde{P}R$ for some invertible $R\in\C^{k,k}$ due to
$\mathcal{R}(P)=\mathcal{R}(\widetilde{P})=\mathcal{R}(\Pi)$. Now $\det(P^\top P)=\det(\widetilde{P}^\top \widetilde{P})\det(R)^2$ proves the second assertion in the lemma. Finally, if
 $\det(P^\top P)\neq 0$ then we can replace $P$ and $\widetilde{\Phi}$ in \eqref{repp} by
\begin{equation} \label{normP2}
 P_0=P \diag(\det(P^\top P)^{-1/2},1,\ldots,1)\in \bbM^{d,k}
 \,\text{ and } \Phi_0=\widetilde{\Phi} (P_0^\top\widetilde{\Phi})^{-1}.
\end{equation}
Since $P_0\in\bbM^{d,k}$, we arrive at \eqref{normalrep2}.
\hfill\end{proof}

Next consider two subspaces of complementary dimension. We may write
them as images of projections that both have the representation \eqref{normalrep1}, or, perhaps, even the normalized representation
\eqref{normalrep2}.
\begin{definition} \lb{defsub}
Let $U,V \subset \C^d$ be subspaces of dimension $k$ and $d-k$,
respectively. Pick any $P\in\C^{d,k}$ of rank $k$ such that
$\mathcal{R}(P)=U$ and $Q\in\C^{d,d-k}$ of rank $d-k$ such that
$\mathcal{R}(Q)=V$.
Then  we call
\begin{equation}\label{detset1}
\cD([P],[Q])= \{\det(P_1|Q_1): P_1\in[P], Q_1\in[Q]\}
\end{equation}
the determinant set of the equivalent classes $([P],[Q])$ from Definition \ref{defEQ}.
In addition,  assume that $P \in \bbM^{d,k},Q\in \bbM^{d,d-k}$. Then we call
\begin{equation} \lb{detset2}
\cD(U,V) = \{ \det(P|Q) : P \in \bbM^{d,k},Q\in \bbM^{d,d-k},
\mathcal{R}(P)=U, \mathcal{R}(Q)=V \}
\end{equation}
the determinant set of the subspaces $U$ and $V$.
\end{definition}
\begin{remark}\lb{remUNIQ}
We stress that the set $\cD([P],[Q])$ is defined with no additional assumptions on the subspaces $U,V$. Although this set does not depend on the choice of the representatives $P,Q$ in the equivalence classes $[P],[Q]$, it does depend on the choice of the equivalence classes. On the other hand,
the set $\cD(U,V)$ is defined under an additional assumption on the subspaces $U,V$, but is uniquely determined by these subspaces.
\hfill$\Diamond$\end{remark} 

Our key observation is the following lemma which shows that the class
\eqref{defuni} and the definitions \eqref{detset1}, \eqref{detset2} have some significance.
\begin{lemma} \lb{detchar}
Let $U,V \subset \C^d$ be subspaces of dimension $k$ and $d-k$,
respectively. Pick any $P\in\C^{d,k}$ of rank $k$ such that
$\mathcal{R}(P)=U$ and $Q\in\C^{d,d-k}$ of rank $d-k$ such that
$\mathcal{R}(Q)=V$.
Then there exists $z \in \C$ such that $\cD([P],[Q])=\{z,-z\}$. The subspaces 
$U$ and $V$ are complementary if and only if $z \neq 0$. In addition, assume that there are $P_0,Q_0$ picked as above and such that $\det(P_0^\top P_0)\det(Q_0^\top Q_0)\neq0$. Then there exists $z \in \C$ such that $\cD(U,V)=\{z,-z\}$.
\end{lemma}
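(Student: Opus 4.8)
The plan is to reduce everything to the effect of right-multiplication by an invertible matrix on a determinant of the form $\det(P_1|Q_1)$, where $(P_1|Q_1)\in\C^{d,d}$ is the block matrix obtained by juxtaposing columns. First I would handle the claim about $\cD([P],[Q])$. Fix the representatives $P,Q$ given in the statement and set $z_0=\det(P|Q)$. Any $P_1\in[P]$ is $P_1=PR$ with $\det R=\pm1$, and any $Q_1\in[Q]$ is $Q_1=QT$ with $\det T=\pm1$; then $(P_1|Q_1)=(P|Q)\,\mathrm{diag}(R,T)$, so $\det(P_1|Q_1)=\det(P|Q)\det(R)\det(T)=\pm z_0$. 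This shows $\cD([P],[Q])\subseteq\{z_0,-z_0\}$. For the reverse inclusion, I note that the value $z_0$ is attained by the original pair, and $-z_0$ is attained by, say, replacing $P$ with $P\,\mathrm{diag}(-1,1,\dots,1)$ (which lies in $[P]$ since the diagonal matrix has determinant $-1$) while keeping $Q$. Hence $\cD([P],[Q])=\{z_0,-z_0\}$ with $z=z_0$.

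Next, the equivalence "$U,V$ complementary $\iff z\neq0$" is just the standard linear-algebra fact that $\C^d=U\oplus V$ exactly when the columns of $P$ together with the columns of $Q$ form a basis of $\C^d$, i.e. when $(P|Q)$ is invertible, i.e. when $\det(P|Q)\neq0$. Since any choice of rank-$k$ and rank-$(d-k)$ matrices with $\mathcal R(P)=U$, $\mathcal R(Q)=V$ gives the same determinant up to the nonzero factor $\det(R)\det(T)$ (invertible change of column bases), the condition $\det(P|Q)\neq0$ is independent of the representatives, and the claim follows; note $k+(d-k)=d$ is exactly what makes $(P|Q)$ square, so complementarity is the right dichotomy.

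Finally, for the statement about $\cD(U,V)$, I would first invoke Remark \ref{remUNIQ} / Lemma \ref{prLem}: the hypothesis that some admissible $P_0$ satisfies $\det(P_0^\top P_0)\neq0$ guarantees, after normalizing the first column as in \eqref{normP2}, that the set $\bbM^{d,k}$ of admissible representatives with $\det(P^\top P)=1$ is nonempty (and similarly for $V$ using $Q_0$). So pick $P\in\bbM^{d,k}$ with $\mathcal R(P)=U$ and $Q\in\bbM^{d,d-k}$ with $\mathcal R(Q)=V$, and set $z=\det(P|Q)$. Any other admissible pair $P_1\in\bbM^{d,k}$, $Q_1\in\bbM^{d,d-k}$ with the same ranges satisfies $P_1=PR$, $Q_1=QT$ with $\det(R)^2=\det(Q)\det(T)^2=1$ by Remark \ref{P12}, hence $\det R,\det T\in\{1,-1\}$, and the same block-matrix computation as above gives $\det(P_1|Q_1)=\pm z$. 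Conversely both signs occur: $z$ is realized by $(P,Q)$, and $-z$ is realized by $(P\,\mathrm{diag}(-1,1,\dots,1),\,Q)$, whose first factor is still in $\bbM^{d,k}$ since $\det\big((P\,\mathrm{diag}(-1,1,\dots,1))^\top(P\,\mathrm{diag}(-1,1,\dots,1))\big)=\det(P^\top P)=1$. Thus $\cD(U,V)=\{z,-z\}$.

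The only genuinely delicate point is the last part: one must be careful that the determinant set $\cD(U,V)$ is taken over all admissible $P\in\bbM^{d,k}$, not just those of the special normalized shape \eqref{normP2}, so the argument must show that \emph{every} such representative differs from a fixed one by a matrix $R$ with $\det R=\pm1$ — this is exactly where Remark \ref{P12} (the computation $1=\det(P_1^\top P_1)=\det(P^\top P)\det(R)^2=\det(R)^2$) is essential, and where the hypothesis $\det(P_0^\top P_0)\neq0$ is used to ensure $\bbM^{d,k}$ is nonempty so that the set is not vacuously empty. Everything else is routine block-matrix bookkeeping.
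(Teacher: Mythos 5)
Your proof is correct and follows essentially the same route as the paper's: the block-diagonal right-multiplication identity $\det(P_1|Q_1)=\det(P|Q)\det(R)\det(T)$, the matrix $\diag(-1,1,\dots,1)$ to realize both signs, and the normalization \eqref{normP2} together with Remark \ref{P12} for the $\cD(U,V)$ case. (The expression ``$\det(R)^2=\det(Q)\det(T)^2=1$'' contains a stray factor $\det(Q)$ --- evidently a typo for the two separate conditions $\det(R)^2=1$ and $\det(T)^2=1$ --- but this does not affect the argument.)
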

\begin{proof} First take  $P_1 \in [P]$, $Q_1\in [Q]$
  with $\mathcal{R}(P_1)=U, \mathcal{R}(Q_1)=V$  and define  $z=\det(P_1|Q_1)$. Then take another pair
$P_2,Q_2$ of this type and use Definition \ref{defEQ} to write  $P_2=P_1R$,
$Q_2=Q_1S$ for some matrices $R\in \C^{k,k}$, $S \in \C^{d-k,d-k}$ such that $\det(R),\det(S)\in\{-1,1\}$.
Therefore we conclude
\begin{equation*} 
\det(P_2|Q_2)= \det\left( (P_1|Q_1) 
\begin{pmatrix} R & 0 \\ 0 & S \end{pmatrix} \right)= z \det(R)\det(S).
\end{equation*} This proves $\cD([P],[Q])\subset \{z,-z\}$. In fact we have equality
since $-z$ is attained by taking $R=\diag(-1,1,\ldots,1), S=I_{d-k}$.
The second assertion of the lemma is obvious.
 By normalizing $P_0,Q_0$ as in \eqref{normP2}, we may assume that $P_0 \in \bbM^{d,k}$,
$Q_0\in \bbM^{d,d-k}$, and the third assertion holds by Remark \ref{P12}.
\hfill\end{proof}
\begin{definition} \label{holsub}
A family of subspaces $U(\l)\subset \C^d$, $\l\in \Omega \subset \C$, 
is called {\it holomorphic in $\Omega$} if there exists a holomorphic
family of projections $\Pi \in \bbH(\Omega,\C^{d,d})$ such that
$\mathcal{R}(\Pi(\l))=U(\l)$ for all $\l\in \Omega$.
\end{definition}
Clearly, by the connectedness of $\Omega$ the projections must be of 
constant rank and hence the subspaces are of constant dimension.

The following lemma generalizes the representation \eqref{normalrep1} and the normalized representation \eqref{normalrep2}
to holomorphic families. It is essential that the domain is simply connected.
\begin{lemma} \label{holpro} 
Let $\Omega$ be a simply connected domain
in $\C$. For any holomorphic family of projections $\Pi\in \bbH(\Omega, \C^{d,d})$ of rank $k$
there exist functions $P,\Phi \in \bbH(\Omega,\C^{d,k})$
such that 
\begin{equation} \lb{holrep1}
\Pi(\l)= P(\l)\Phi(\l)^\top, \, \Phi(\l)^\top P(\l) = I_k, 
\quad \text{for all} \; \l\in \Omega.
\end{equation}
Moreover, for any $P \in \bbH(\Omega,\C^{d,k})$ such that
$\mathcal{R}(P(\l))=\mathcal{R}(\Pi(\l))$ for all $\l\in\Omega$, the set
\begin{equation}\label{defLP}
\Lambda=\{\l\in\Omega: \det(P(\l)^\top P(\l))=0\}
\end{equation}
is discrete, may be empty, and is independent of the choice of $P$.  Finally, there exist functions $P_0,\Phi_0 \in \bbH(\Omega\setminus\Lambda, \C^{d,k})$
such that 
\begin{equation} \lb{holrep2}
\Pi(\l)= P_0(\l)\Phi_0(\l)^\top,  \Phi_0(\l)^\top P_0(\l) = I_k, P_0(\l)\in \bbM^{d,k} \text{ for all } \l\in \Omega\setminus\Lambda.
\end{equation} 
\end{lemma}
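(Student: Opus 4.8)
The plan is to establish the three assertions in turn: \eqref{holrep1} via a Kato-type transport (intertwining) operator, the claim on $\Lambda$ via the identity theorem, and \eqref{holrep2} via a normalization that hinges on extracting a holomorphic square root. For \eqref{holrep1}, fix $\l_0\in\Omega$ and use Lemma~\ref{prLem} to choose $P_1,\widetilde{\Phi}_1\in\C^{d,k}$ with $\Pi(\l_0)=P_1\widetilde{\Phi}_1^\top$ and $\widetilde{\Phi}_1^\top P_1=I_k$. Consider the linear initial value problem
\[
U'(\l)=\big(\Pi'(\l)\Pi(\l)-\Pi(\l)\Pi'(\l)\big)U(\l),\qquad U(\l_0)=I_d .
\]
Its coefficient is holomorphic on all of $\Omega$, so solutions continue holomorphically along every path in $\Omega$; since $\Omega$ is simply connected the monodromy theorem produces a single-valued $U\in\bbH(\Omega,\C^{d,d})$, and Abel's identity together with $\tr(\Pi'\Pi-\Pi\Pi')=0$ gives $\det U(\l)\equiv1$, so each $U(\l)$ is invertible. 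Differentiating $U^{-1}\Pi U$ and using $\Pi^2=\Pi$ (whence $\Pi'\Pi+\Pi\Pi'=\Pi'$ and $\Pi\Pi'\Pi=0$) one finds $\frac{d}{d\l}\big(U^{-1}\Pi U\big)=0$, i.e. $\Pi(\l)U(\l)=U(\l)\Pi(\l_0)$ for all $\l$. Then $P(\l)=U(\l)P_1$ and $\Phi(\l)=\big(U(\l)^\top\big)^{-1}\widetilde{\Phi}_1$ are holomorphic on $\Omega$ and satisfy $P(\l)\Phi(\l)^\top=U(\l)\Pi(\l_0)U(\l)^{-1}=\Pi(\l)$ and $\Phi(\l)^\top P(\l)=\widetilde{\Phi}_1^\top U(\l)^{-1}U(\l)P_1=I_k$, which is \eqref{holrep1}; in particular $\mathcal{R}(P(\l))=\mathcal{R}(\Pi(\l))$.

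For the statement on $\Lambda$, the function $g(\l)=\det\big(P(\l)^\top P(\l)\big)$ is holomorphic on $\Omega$, so by the identity theorem on the connected set $\Omega$ its zero set $\Lambda$ is either discrete (possibly empty) or all of $\Omega$, the former being the case whenever $g\not\equiv0$. If $\widetilde{P}\in\bbH(\Omega,\C^{d,k})$ is any other frame with $\mathcal{R}(\widetilde{P}(\l))=\mathcal{R}(\Pi(\l))$ for all $\l$, then $\widetilde{P}(\l)=P(\l)R(\l)$ with $R(\l)=\Phi(\l)^\top\widetilde{P}(\l)\in\bbH(\Omega,\C^{k,k})$ invertible at every $\l$ (both frame the same $k$-dimensional space), so $\det\big(\widetilde{P}^\top\widetilde{P}\big)=\det(R)^2\,g$ with $\det R$ nowhere vanishing; hence the two zero sets coincide and $\Lambda$ does not depend on the choice of $P$.

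For \eqref{holrep2}, on $\Omega\setminus\Lambda$ we have $g(\l)\neq0$, and it suffices to find a holomorphic branch $s\in\bbH(\Omega\setminus\Lambda,\C)$ with $s(\l)^2=g(\l)^{-1}$: then $C(\l)=\diag(s(\l),1,\dots,1)$ is holomorphic and invertible, and $P_0=PC$, $\Phi_0=\Phi C^{-\top}$ are holomorphic on $\Omega\setminus\Lambda$ with $P_0\Phi_0^\top=\Pi$, $\Phi_0^\top P_0=I_k$ and $\det(P_0^\top P_0)=\det(C)^2 g=1$, i.e. $P_0(\l)\in\bbM^{d,k}$, which is \eqref{holrep2}. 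The main obstacle is exactly this square root. Since $\Omega$ is simply connected, the first homology of $\Omega\setminus\Lambda$ is generated by small loops around the points of $\Lambda$, and a nowhere-zero holomorphic function on $\Omega\setminus\Lambda$ admits a holomorphic square root precisely when its winding number along each such loop is even, i.e. precisely when every zero of $g$ in $\Omega$ has even order; in that case one simply takes $s=\exp\big(-\tfrac12\log g\big)$ for a holomorphic logarithm of $g$ on $\Omega\setminus\Lambda$. In general \eqref{holrep2} is still obtained on any simply connected subdomain of $\Omega\setminus\Lambda$, and globally after enlarging $\Lambda$ by disjoint cuts joining its points to $\partial\Omega$, which renders the complement simply connected. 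This monodromy/parity point, rather than any of the algebraic identities, is where simple connectedness genuinely enters and is the step demanding the most care.
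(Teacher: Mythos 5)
Your proof is correct and takes a genuinely different, more self-contained route for the first assertion. Where the paper simply invokes Kato \cite[Sec.II.4.2]{K80} to obtain holomorphic frames for $\mathcal{R}(\Pi(\l))$ and $\mathcal{R}(\Pi(\l)^\top)$ and then normalizes $\widetilde{\Phi}$ as in \eqref{normP1}, you reconstruct Kato's transport operator from scratch: the intertwining ODE $U'=[\Pi',\Pi]U$, single-valuedness via the monodromy theorem on the simply connected $\Omega$, invertibility via Abel's identity, and the biorthogonality $\Phi^\top P=I_k$ falls out of the conjugation $\Pi(\l)=U(\l)\Pi(\l_0)U(\l)^{-1}$ with no extra normalization step. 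What this buys is transparency about exactly where simple connectedness enters; the cost is reproving a cited result. Your argument for the independence of $\Lambda$ from the choice of frame is the same change-of-frame computation as the paper's. One small point you share with the paper: over $\C$ the function $\det(P(\cdot)^\top P(\cdot))$ can vanish identically (e.g. $P=\binom{1}{i}$), in which case $\Lambda=\Omega$ is not discrete; you at least flag the dichotomy from the identity theorem, while the paper asserts discreteness outright.

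On the third assertion your extra care is not pedantry but exposes a real issue. As you observe, any admissible $P_0$ must equal $PR$ with $\det(R)^2\det(P^\top P)=1$, so \eqref{holrep2} on all of $\Omega\setminus\Lambda$ requires a single-valued holomorphic square root of $\det(P(\cdot)^\top P(\cdot))$ there, which exists precisely when each point of $\Lambda$ is a zero of even order. The paper's proof dismisses this with the remark that the square root continues analytically in any simply connected domain avoiding $0$ — a statement about the $w$-plane that does not resolve the monodromy in the $\l$-plane around points of $\Lambda$. Indeed the paper's own later remark that $\cE_0(\l)\sim(\l-\l_0)^{-m/2}$ near $\l_0\in\Lambda(U,V)$ is only single-valued for even $m$. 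Your fallback (restrict to simply connected subdomains of $\Omega\setminus\Lambda$, or cut the domain) is the honest version of the statement; if you wanted to match the lemma as literally written you would have to add the parity hypothesis, and it is worth recording a counterexample such as $P(\l)=\binom{1}{i+\l}$, for which $\det(P^\top P)=\l(\l+2i)$ has a simple zero and no normalization in $\bbM^{2,1}$ exists on a punctured neighborhood of $0$.
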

\begin{proof}
By a result from \cite[Sec.II.4.2]{K80} there exist functions
$P, \widetilde{\Phi} \in \bbH(\Omega,\C^{d,k})$ such that
$\mathcal{R}(\Pi(\l))=\mathcal{R}(P(\l))$,
$\mathcal{R}(\Pi(\l)^\top)=\mathcal{R}(\widetilde{\Phi}(\l))$ for all $\l\in \Omega$. This step uses simple
connectedness. In the next step, as in the proof of Lemma \ref{prLem}, 
we normalize $\widetilde{\Phi}(\l)$ as in  \eqref{normP1}, and keep holomorphy. This proves \eqref{holrep1}.
The set of zeros of the holomorphic function $\det(P(\cdot)^\top P(\cdot))$
is discrete. If $P_1, P_2 \in \bbH(\Omega,\C^{d,k})$ are such that
$\mathcal{R}(P_{1}(\l))=\mathcal{R}(P_{2}(\l))=\mathcal{R}(\Pi(\l))$ for all $\l\in\Omega$ then there is a nonsingular 
$R(\l)\in\C^{k,k}$ such that $R_2(\l)=P_1(\l)R(\l)$; hence, $\det(P_1(\cdot)^\top P_1(\cdot))$ and $\det(P_2(\cdot)^\top P_2(\cdot))$ have the same zeros.
If $\l\in\Omega\setminus\Lambda$ then $\det(P(\l)^\top P(\l))\neq0$ and
we normalize $P(\l)$ and $\Phi(\l)$
as in \eqref{normP2}, and keep holomorphy. 
Note that the square root has (up to a sign) a
unique analytic continuation in any simply connected domain that does not
contain $0$.
\hfill\end{proof}
 
The following theorem shows how one can assign to two families of
holomorphic subspaces the {\em usual} 
Evans function which is equal to zero if and only if the subspaces are not complementary. This function is
holomorphic on all of $\Omega$ see \eqref{condevans1}.  It depends, up to a sign, on the equivalence classes from Definition \ref{defEQ} for the matrices formed by the basis vectors of the subspaces.
In addition, one can assign to the two families of subspaces an Evans function that is unique, up to a sign, but is holomorphic
on a smaller set $\Omega\setminus\Lambda(U,V)$, see \eqref{condevans2}.  We will call it the 
{\em normalized} Evans function. We refer to Remark \ref{remUNIQ} regarding the sets $\cD([P(\l)],[Q(\l)])$ and $\cD(U(\l),V(\l))$ used in \eqref{condevans1}, \eqref{condevans2} below.
\begin{theorem} \lb{evexist}
Let $\Omega \in \C$ be a simply connected domain and let 
$U(\l),V(\l),\l\in \Omega$, be two holomorphic families of subspaces of
dimensions $k$ and $d-k$, respectively. Pick any $P\in\bbH(\Omega,
\C^{d,k})$ such that $\mathcal{R}(P(\l))=U(\l)$, $\rank (P(\l))=k$, and
$Q\in\bbH(\Omega, \C^{d,d-k})$ such that $\mathcal{R}(Q(\l))=V(\l)$, $\rank (Q(\l))=d-k$ for all $\l\in\Omega$. Then the following assertions hold.

$(i)$\, There exists a holomorphic
function $\cE:\Omega \rightarrow \C$ such that
\begin{equation} \lb{condevans1}
\cE(\l) \in \cD([P(\l)],[Q(\l)]) \quad \text{for all} \quad \l \in \Omega.
\end{equation}

$(ii)$\, $\cE(\lambda_0)=0$ if and only if $U(\lambda_0)\cap V(\l_0)\neq\{0\}$.

$(iii)$\, If $\cE_1 \in \bbH(\Omega,\C)$ is any function satisfying \eqref{condevans1}
then either $\cE_1=\cE$ or $\cE_1=-\cE$.

$(iv)$\, Moreover,  the set
\begin{equation}\label{defLPl}
\Lambda(U,V)=\{\l\in\Omega: \det(P(\l)^\top P(\l))\det(Q(\l)^\top Q(\l))=0\}
\end{equation}
is discrete, may be empty, and is independent of the choice of $P,Q$ picked as  above.

$(v)$\, Finally, there exists a holomorphic
function $\cE_0:\Omega\setminus\Lambda(U,V) \rightarrow \C$ such that
\begin{equation} \lb{condevans2}
\cE_0(\l) \in \cD(U(\l),V(\l)) \quad \text{for all} \quad \l \in \Omega\setminus\Lambda(U,V),
\end{equation}
and properties $(i), (ii)$ hold for $\cE_0$.
\end{theorem}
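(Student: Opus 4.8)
\emph{Proof proposal.} The plan is to take the obvious candidate for the Evans function, the determinant of the juxtaposed basis matrices, and to read off $(i)$--$(v)$ from the pointwise structure of determinant sets (Lemma \ref{detchar}) together with the holomorphic normalization supplied by Lemma \ref{holpro}. For $(i)$ I would set $\cE(\l)=\det(P(\l)|Q(\l))$, the determinant of the $d\times d$ matrix whose first $k$ columns are those of $P(\l)$ and whose last $d-k$ columns are those of $Q(\l)$; this is holomorphic because its entries are and $\det$ is polynomial, and $\cE(\l)\in\cD([P(\l)],[Q(\l)])$ by \eqref{detset1} since trivially $P(\l)\in[P(\l)]$ and $Q(\l)\in[Q(\l)]$. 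For $(ii)$ I would invoke the pointwise statement of Lemma \ref{detchar}: $\cD([P(\l_0)],[Q(\l_0)])=\{\cE(\l_0),-\cE(\l_0)\}$ and $\cE(\l_0)\neq0$ iff $U(\l_0),V(\l_0)$ are complementary; since $\dim U(\l_0)+\dim V(\l_0)=d$, being complementary is the same as $U(\l_0)\cap V(\l_0)=\{0\}$, which gives $(ii)$. (Alternatively, one can argue directly: using $\rank P(\l_0)=k$ and $\rank Q(\l_0)=d-k$, the $d$ columns of $(P(\l_0)|Q(\l_0))$ are linearly dependent precisely when $U(\l_0)\cap V(\l_0)\neq\{0\}$.)

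For $(iii)$, Lemma \ref{detchar} gives $\cD([P(\l)],[Q(\l)])=\{\cE(\l),-\cE(\l)\}$ for every $\l\in\Omega$, so any $\cE_1\in\bbH(\Omega,\C)$ satisfying \eqref{condevans1} obeys $\cE_1(\l)^2=\cE(\l)^2$ throughout $\Omega$; hence $(\cE_1-\cE)(\cE_1+\cE)\equiv0$, and since $\Omega$ is connected the ring $\bbH(\Omega,\C)$ has no zero divisors, forcing $\cE_1\equiv\cE$ or $\cE_1\equiv-\cE$. For $(iv)$, Definition \ref{holsub} furnishes holomorphic projection families $\Pi_U,\Pi_V\in\bbH(\Omega,\C^{d,d})$ of ranks $k$ and $d-k$ with $\mathcal{R}(\Pi_U(\l))=U(\l)$ and $\mathcal{R}(\Pi_V(\l))=V(\l)$; since our $P$ (resp.\ $Q$) is a holomorphic basis matrix for $\mathcal{R}(\Pi_U)$ (resp.\ $\mathcal{R}(\Pi_V)$), Lemma \ref{holpro} says that $\Lambda_U=\{\l:\det(P(\l)^\top P(\l))=0\}$ and $\Lambda_V=\{\l:\det(Q(\l)^\top Q(\l))=0\}$ are each discrete (possibly empty) and independent of the chosen basis matrix, whence $\Lambda(U,V)=\Lambda_U\cup\Lambda_V$ is discrete and depends only on $U,V$.

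For $(v)$ I would apply the normalized conclusion \eqref{holrep2} of Lemma \ref{holpro}: to $\Pi_U$ it yields $P_0\in\bbH(\Omega\setminus\Lambda_U,\C^{d,k})$ with $P_0(\l)\in\bbM^{d,k}$ and $\mathcal{R}(P_0(\l))=U(\l)$, and to $\Pi_V$ it yields $Q_0\in\bbH(\Omega\setminus\Lambda_V,\C^{d,d-k})$ with $Q_0(\l)\in\bbM^{d,d-k}$ and $\mathcal{R}(Q_0(\l))=V(\l)$. Restricting both to $\Omega\setminus\Lambda(U,V)$ and setting $\cE_0(\l)=\det(P_0(\l)|Q_0(\l))$ produces a holomorphic function there with $\cE_0(\l)\in\cD(U(\l),V(\l))$ by \eqref{detset2}, so $(i)$ for $\cE_0$ is immediate; and since on $\Omega\setminus\Lambda(U,V)$ one has $P_0=PR$ and $Q_0=QS$ with $R=(P^\top P)^{-1}P^\top P_0$ and $S=(Q^\top Q)^{-1}Q^\top Q_0$ holomorphic and invertible, we get $\cE_0=(\det R)(\det S)\,\cE$, whose zero set on $\Omega\setminus\Lambda(U,V)$ coincides with that of $\cE$, so $(ii)$ for $\cE_0$ follows from $(ii)$ for $\cE$. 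I expect the only genuinely delicate point to be hidden in $(v)$: the normalization producing $P_0,Q_0$ with $\det(P_0^\top P_0)=\det(Q_0^\top Q_0)=1$ must be globally single-valued and holomorphic off the discrete set $\Lambda(U,V)$, which is precisely where Lemma \ref{holpro} (and the analytic square root on a region avoiding $0$) does the real work; once that lemma is granted, the remainder is bookkeeping, the only other care being the zero-divisor step in $(iii)$ and the verification that $\Lambda(U,V)$ does not depend on the auxiliary matrices $P,Q$.
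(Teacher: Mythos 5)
Your proposal is correct and follows essentially the same route as the paper: define $\cE(\l)=\det(P(\l)|Q(\l))$ and $\cE_0(\l)=\det(P_0(\l)|Q_0(\l))$, then read off $(i),(ii),(iv),(v)$ from Lemma \ref{detchar} and Lemma \ref{holpro} exactly as the paper does. The one genuine (minor) variant is your treatment of $(iii)$: the paper forms the quotient $\cE_1/\cE$ on $\Omega\setminus\cN(U,V)$, notes it takes only the values $\pm1$, and uses discreteness of $\cN(U,V)$ to conclude it is constant; you instead observe $\cE_1^2\equiv\cE^2$ pointwise from Lemma \ref{detchar} and invoke that $\bbH(\Omega,\C)$ is an integral domain on the connected set $\Omega$, so $(\cE_1-\cE)(\cE_1+\cE)\equiv0$ forces one factor to vanish identically. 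Your version is a touch cleaner because it requires no separate discussion of the zero set and covers the degenerate case $\cE\equiv0$ automatically, but the two arguments are really the same identity-theorem fact dressed differently.
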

\begin{proof} By Lemma \ref{holpro} we have a  representation
for  the projections $\Pi_U,\Pi_V \in \bbH(\Omega,\C^{d,d})$ associated 
with $U(\l)$, $V(\l)$ as follows
\begin{equation} \lb{reploc}
\Pi_U(\l)=P(\l)\Phi(\l)^\top, \quad \Pi_V(\l)=Q(\l)\Psi(\l)^\top, \quad \l \in \Omega,
\end{equation}
where  $P,\Phi\in \bbH(\Omega,\C^{d,k})$, $Q,\Psi \in \bbH(\Omega,\C^{d,d-k})$ and
for all $\l \in \Omega$
\begin{equation} \lb{normloc1}
\begin{aligned} \Phi^\top(\l)P(\l)= I_k,  \quad
   \Psi^\top(\l)Q(\l)= I_{d-k}.
\end{aligned}
\end{equation}
Defining 
\begin{equation} \label{defevans1}
\cE(\l) = \det(P(\l)|Q(\l)), \quad\l \in \Omega,
\end{equation}
 and using
Lemma \ref{detchar} proves assertions $(i)$ and  $(ii)$.
Now let $\cE_1$ be any function with the properties of $\cE$.
 Then the quotient
${\cE_1}/{\cE}$ is  holomorphic on  
$\Omega \setminus \cN(U,V)$ where the zero set is given by 
\begin{equation} \label{zeroset}
 \cN(U,V) = \{ \l \in \Omega: \cD([P(\l)],[Q(\l)])= \{0\} \}=
\{\l\in \Omega: \cE(\l)=0  \}.
\end{equation}
Since the quotient assumes only values $\pm1$ there and $\cN(U,V)$ contains
only isolated points it is a constant yielding $(iii)$. Assertion $(iv)$ follows from Lemma \ref{holpro}, see \eqref{holrep1}. Also, by Lemma \ref{holpro}, see \eqref{holrep2}, we can choose
functions $P_0,\Phi_0\in \bbH(\Omega\setminus\Lambda(U,V),\C^{d,k})$ and  $Q_0,\Psi_0 \in \bbH(\Omega\setminus\Lambda(U,V),\C^{d,d-k})$ such that, in addition to \eqref{reploc}, \eqref{normloc1}, the following normalization holds:
\begin{equation} \lb{normloc2}
 P_0(\l)\in \bbM^{d,k}, \quad
 Q_0(\l) \in \bbM^{d,d-k}, \quad \l\in\Omega\setminus\Lambda(U,V).
\end{equation}
Defining 
\begin{equation} \label{defevans2}
\cE_0(\l) = \det(P_0(\l)|Q_0(\l)), \quad\l \in \Omega\setminus\Lambda(U,V),
\end{equation}
and using Lemma \ref{detchar} proves assertion $(v)$.
\hfill\end{proof}

\begin{remark}
1. An alternative proof of the theorem can be obtained  directly 
from Lemma \ref{detchar} without using Lemma \ref{holpro} and \cite[Sec.II.4.2]{K80}.
One first defines $\cE$, respectively, $\cE_0$ locally via \eqref{defevans1}, respectively, \eqref{defevans2} using a respective local
normalization that is always possible (without simple connectedness).
Then one shows that one can continue holomorphically along any curve in 
$\Omega$, respectively, $\Omega\setminus\Lambda(U,V)$ (since the set $\cN(U,V)$ defined in \eqref{zeroset} is isolated) and the sign of the continuation is determined from
 Lemma \ref{detchar}. Since the domain is simply connected, the result follows from the Monodromy Theorem, see, e.g., \cite[Cor.IX.3.9]{C78}.\\
2. According to Theorem \ref{evexist} the normalized Evans function is unique
up to a sign on simply connected domains. The assumption of simple connectedness
is crucial for the proof. 
The main ingredient which makes the Evans function unique up to a sign
was to allow only matrices with $\det(P^\top P) =1$ in Definition \ref{defsub}.
When homotopy arguments are applied to the Evans functions between different
points in $\Omega$, e.g. between $0$ and $\infty$, it remains to be checked
whether the values are from the same leaf that belongs to one of the signs. \\
3. Normalization \eqref{normP2} shows that the normalized Evans function $\cE_0$ can not be continued to $\Lambda(U,V)$ since $\cE_0(\l)\sim(\l-\l_0)^{-m/2}$ as $\l\to\l_0$ at any point $\l_0\in\Lambda(U,V)$ which is a zero of order $m$ of the function $\det(P(\cdot)^\top P(\cdot))\det(Q(\cdot)^\top Q(\cdot))$ holomorphic in $\Omega$.
\hfill$\Diamond$
\end{remark}

In the next step we project vectors in $\C^d$  onto their components in the 
subspaces $U(\l)$ and $V(\l)$. Although this cannot work at the zeroes of
the Evans function we insist that the main part of the projection stays
holomorphic at the singularity.
Recall the adjugate $\adj(A)$ of a square matrix $A \in \C^{d,d}$ given by
\begin{equation} \lb{adjdef}
 (\adj(A))_{ij}  = (-1)^{i+j} \det\left(A_{\ell,m}\right)_{\ell=1,\ldots,j-1,j+1,\ldots,d}^{m=1,\ldots,i-1,i+1,\ldots,d},
\end{equation} 
(see e.g. \cite[Sec.4.4]{S88}). It  satisfies the identities $A \adj(A)= \adj(A) A = \det(A) I_d$ and 
$\det(\adj(A))= \det(A)^{d-1}$. More importantly, by definition the adjugate
 preserves holomorphy, i.e.
 if $A$ is in $ \bbH(\Omega,\C^{d,d})$ then so is $\adj(A)$.
\begin{theorem} \label{projecthol}
Let the assumptions of Theorem \ref{evexist} hold and let 
$\cE\in \bbH(\Omega,\C)$, respectively, $\cE_0\in \bbH(\Omega\setminus\Lambda(U,V),\C)$ be one of the two Evans functions, respectively, normalized Evans functions,  determined there.
Then there exist  matrix valued functions $\cY_U, \cY_V \in \bbH(\Omega,\C^{d,d})$
with the following properties:
\begin{equation} \lb{addproj}
\cE(\l) I_d = \cY_U(\l)+\cY_V(\l) \quad \text{for all}\quad  \l \in \Omega ,
\end{equation}

\begin{equation} \lb{UVprop}
\begin{array}{cc}
  \mathcal{R}(\cY_U(\l))  =  U(\l), & \mathcal{N}(\cY_U(\l))= V(\l),  \\
  \mathcal{R}(\cY_V(\l))  = V(\l), & \mathcal{N}(\cY_V(\l))= U(\l) 
\end{array} 
\quad \text{if} \quad \cE(\l) \neq 0,
\end{equation}
\begin{equation} \lb{UVsingular}
\begin{array}{cc}
  \mathcal{R}(\cY_U(\l)) \subset U(\l), & \mathcal{N}(\cY_U(\l)) \supset V(\l),  \\
  \mathcal{R}(\cY_V(\l))  \subset V(\l), & \quad \mathcal{N}(\cY_V(\l))\supset U(\l) 
\end{array} \quad \text{if} \quad \cE(\l)=0. 
\end{equation}
Conversely, the functions $\cY_U, \cY_V \in \bbH(\Omega,\C^{d,d})$ are uniquely
determined by properties \eqref{addproj} and \eqref{UVprop}. 
Similarly, for the normalized Evans function $\cE_0$ there exist
$\cY_U^0(\l), \cY_V^0(\l)\in
\bbH(\Omega\setminus\Lambda(U,V),\C^{d,d})$ with the same properties.
\end{theorem}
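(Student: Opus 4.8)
The plan is to realize $\cY_U$ and $\cY_V$ as the two row-blocks of the adjugate of the block matrix $(P\,|\,Q)$; this is precisely the device that removes the singularity of the blockwise inverse at the zeros of the Evans function while keeping everything holomorphic. So I would begin by fixing, as in the proof of Theorem \ref{evexist}, holomorphic generators $P\in\bbH(\Omega,\C^{d,k})$ and $Q\in\bbH(\Omega,\C^{d,d-k})$ with $\mathcal{R}(P(\l))=U(\l)$, $\mathcal{R}(Q(\l))=V(\l)$ and $\cE=\det(P\,|\,Q)$, cf.\ \eqref{defevans1}. Set $M(\l)=(P(\l)\,|\,Q(\l))\in\bbH(\Omega,\C^{d,d})$ and $N:=\adj(M)$, which lies in $\bbH(\Omega,\C^{d,d})$ since the adjugate preserves holomorphy. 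Split $N$ into its first $k$ rows $\widetilde{A}\in\bbH(\Omega,\C^{k,d})$ and its last $d-k$ rows $\widetilde{B}\in\bbH(\Omega,\C^{d-k,d})$, and define $\cY_U:=P\widetilde{A}$, $\cY_V:=Q\widetilde{B}$, both in $\bbH(\Omega,\C^{d,d})$ as products of holomorphic maps. From $MN=\det(M)I_d$ one reads off $\cY_U+\cY_V=P\widetilde{A}+Q\widetilde{B}=\cE I_d$, which is \eqref{addproj}, while the companion identity $NM=\det(M)I_d$, written in $2\times2$ block form, yields the four identities valid on all of $\Omega$:
\[
\widetilde{A}P=\cE I_k,\quad \widetilde{A}Q=0,\quad \widetilde{B}P=0,\quad \widetilde{B}Q=\cE I_{d-k}.
\]

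Next I would extract the range and kernel information. From the first line, $\mathcal{R}(\cY_U)\subseteq\mathcal{R}(P)=U$ and $\mathcal{R}(\cY_V)\subseteq\mathcal{R}(Q)=V$; from $\widetilde{A}Q=0$ and $\widetilde{B}P=0$ one gets $\cY_U Q=P(\widetilde{A}Q)=0$ and $\cY_V P=Q(\widetilde{B}P)=0$, hence $V\subseteq\mathcal{N}(\cY_U)$ and $U\subseteq\mathcal{N}(\cY_V)$. These inclusions hold everywhere and in particular give \eqref{UVsingular} at the zeros of $\cE$. When $\cE(\l)\neq0$ the spaces $U(\l),V(\l)$ are complementary by Lemma \ref{detchar}, and for $u=P(\l)c\in U(\l)$, $v=Q(\l)c'\in V(\l)$ the identities above give $\cY_U(\l)u=P(\l)\big(\widetilde{A}(\l)P(\l)\big)c=\cE(\l)\,u$ and $\cY_U(\l)v=P(\l)\big(\widetilde{A}(\l)Q(\l)\big)c'=0$. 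Since $\C^d=U(\l)\oplus V(\l)$, this forces $\cY_U(\l)=\cE(\l)\Pi(\l)$, where $\Pi(\l)$ is the projection onto $U(\l)$ along $V(\l)$, whence $\mathcal{R}(\cY_U(\l))=U(\l)$, $\mathcal{N}(\cY_U(\l))=V(\l)$; by \eqref{addproj} then $\cY_V(\l)=\cE(\l)\big(I_d-\Pi(\l)\big)$, giving the symmetric statement. This is \eqref{UVprop}.

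For uniqueness, suppose $\cY_U',\cY_V'\in\bbH(\Omega,\C^{d,d})$ also satisfy \eqref{addproj} and \eqref{UVprop}. On the open dense set $W=\{\l\in\Omega:\cE(\l)\neq0\}$ (dense because $\cE$ is holomorphic on the connected set $\Omega$ and, in the nondegenerate case, not identically zero, so $\cE^{-1}(0)$ is discrete), evaluate $\cE(\l)I_d=\cY_U'(\l)+\cY_V'(\l)$ separately on $U(\l)=\mathcal{N}(\cY_V'(\l))$ and on $V(\l)=\mathcal{N}(\cY_U'(\l))$ to obtain $\cY_U'(\l)=\cE(\l)\Pi(\l)=\cY_U(\l)$, and then $\cY_V'=\cE I_d-\cY_U'=\cY_V$ on $W$; the identity theorem, applied entrywise, propagates these equalities to all of $\Omega$. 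Finally, the statement for the normalized Evans function follows by repeating the whole construction verbatim with the normalized generators $P_0,Q_0$ and $\cE_0=\det(P_0\,|\,Q_0)$ from \eqref{holrep2}, \eqref{normloc2}, \eqref{defevans2}, all holomorphic on $\Omega\setminus\Lambda(U,V)$, which is $\Omega$ minus the discrete set of Theorem \ref{evexist}$(iv)$; this produces $\cY_U^0,\cY_V^0\in\bbH(\Omega\setminus\Lambda(U,V),\C^{d,d})$ with the same properties. The real content here is not an obstacle but the observation in the first paragraph: multiplying the blockwise inverse of $(P\,|\,Q)$ — whose two row-blocks are exactly the oblique projections onto $U$ along $V$ and onto $V$ along $U$ — by $\det(P\,|\,Q)=\cE$ converts it into $\adj(P\,|\,Q)$, which stays holomorphic across the zeros of $\cE$; everything after that is bookkeeping with $MN=NM=\det(M)I_d$ together with the identity theorem, the only mild technical point being the tacit exclusion of the degenerate case $\cE\equiv0$ in the uniqueness clause.
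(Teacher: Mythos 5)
Your proof is correct and follows essentially the same route as the paper's: both define $\cY_U,\cY_V$ as $P$ and $Q$ times the corresponding row-blocks of $\adj(P\,|\,Q)$ and read off \eqref{addproj}--\eqref{UVsingular} from the two identities $MN=NM=\det(M)I_d$, with uniqueness obtained on $\{\cE\neq0\}$ and propagated by holomorphy. Your closing observation that the uniqueness clause tacitly presumes $\cE\not\equiv0$ is a fair point that the paper leaves implicit.
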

\begin{remark} Note that $\cY_U$ and $\cY_V$ behave almost like projections since
\eqref{addproj}, \eqref{UVsingular} imply 
\begin{equation} \lb{projlike}
\cY_U(\l)\cY_U(\l)= \cE(\l)\cY_U(\l), \quad
\cY_V(\l)\cY_V(\l)= \cE(\l)\cY_V(\l), \quad \l \in \Omega.
\end{equation}
 \hfill$\Diamond$
\end{remark}
\begin{proof}For the projections $\Pi_U, \Pi_V \in\bbH(\Omega,\C^{d,d})$ associated with the subspaces $U(\l),V(\l)$, we take the representation \eqref{holrep1} from Lemma \ref{holpro} and partition
the adjugate as follows
\begin{equation} \lb{partadj}
\adj\left(P(\l)|Q(\l)\right) = 
\begin{pmatrix} R(\l)^\top \\ S(\l)^\top \end{pmatrix}, \quad \l \in \Omega,
\end{equation} 
where $R\in {\mathbb H}(\Omega,\C^{d,k})$, $S \in \bbH(\Omega,\C^{d,d-k})$.
With these settings we define
\begin{equation} \lb{defquasiproj}
\cY_U(\l)= P(\l) R(\l)^\top, \quad \cY_V(\l)= Q(\l)S(\l)^\top \quad \text{for}
\quad \l \in \Omega.
\end{equation}
Conditions \eqref{addproj}-\eqref{UVsingular} hold for $\l\in \Omega$ as can
be seen from the identities 
\begin{equation} \label{adad}
\begin{pmatrix} P(\l) & Q(\l) \end{pmatrix}
            \begin{pmatrix} R(\l)^\top \\ S(\l)^\top \end{pmatrix} =
\cE(\l) I_d = \begin{pmatrix} R(\l)^\top \\ S(\l)^\top \end{pmatrix}
            \begin{pmatrix} P(\l) & Q(\l) \end{pmatrix}.
\end{equation}
Note that the first equality implies \eqref{addproj} while the second implies
\eqref{UVsingular}. In case $\cE(\l)\neq 0$ the matrices $R(\l),S(\l)$
are of full rank so that \eqref{UVprop} follows.

Now consider functions  $\tilde{\cY}_U, \tilde{\cY}_V \in \bbH(\Omega,\C^{d,d})$ 
that satisfy
\eqref{addproj},\eqref{UVprop} in $\Omega$. If $\cE(\l)\neq0$ then
 from the general form \eqref{repp}  and \eqref{UVprop} we find representations 
$\tilde{\cY}_U(\l)=P(\l)\tilde{R}(\l)^\top$,  $\tilde{\cY}_V(\l)=Q(\l)\tilde{S}(\l)^\top$ for some $\tilde{R}(\l)\in \C^{d,k},\tilde{S}(\l)\in \C^{d,d-k}$.
Invoking  \eqref{addproj} leads to
\[
\begin{pmatrix}P(\l) &Q(\l) \end{pmatrix} 
\begin{pmatrix} \tilde{R}(\l)^\top \\ \tilde{S}(\l)^\top \end{pmatrix} 
= \cE(\l) I_d,
\]
and hence $\tilde{R}(\l),\tilde{S}(\l)$ and $R(\l),S(\l)$  must agree due to \eqref{adad}.
Thus $\tilde{\cY}_U=\cY_U$, $\tilde{\cY}_V=\cY_V$ holds in 
$\Omega \setminus \cN(U,V)$ and hence in all of $\Omega$ due to  holomorphy. 

The last assertion in the theorem follows as above by taking the representation \eqref{holrep2} instead of \eqref{holrep1}. \hfill\end{proof}

Next we consider the behavior of the matrices $\cY_U$ and $\cY_V$ near zeroes of
the Evans function $\cE$. Let us first note, that the multiplicity
of a value $\l_0$ as a zero of the  Evans function
 corresponds exactly to the algebraic multiplicity of the eigenvalue $\l_0$
 of the matrix pencil $\cY(\l)=(P(\l)\big| Q(\l))$ from \eqref{defevans1} defined in terms of root functions (see \cite[Prop.1.8.5]{MM}).
For simplicity we consider only the behavior near simple eigenvalues and
show that the matrices $\cY_U,\cY_V$ degenerate to rank one matrices.

\begin{theorem} \label{zerobehavior}
Let the assumptions of Theorem \ref{projecthol} hold and let $\lambda_0\in \Omega$ be a simple zero of the Evans function $\cE(\l)$ defined as in 
\eqref{defevans1}. Then there are
nontrivial vectors $v_0,w_0$ such that 
\[\mathcal{N}(P(\l_0)|Q(\l_0))=\mathrm{span} \{v_0\}, \quad  \mathcal{N}((P(\l_0)|Q(\l_0))^\top)=
 \mathrm{span} \{w_0\}.
\]  
Morover, with $v_0=\begin{pmatrix}v_{0,1} \\v_{0,2} \end{pmatrix}$ we have the formulas 
\begin{equation} \label{batsing}
\cY_U(\l_0)=\cE'(\l_0)P(\l_0)v_{0,1} w_0^\top=-\cE'(\l_0)Q(\l_0)v_{0,2} w_0^\top
=-\cY_V(\l_0)
\end{equation}
and, if $v_0^\top v_0=1$,
\begin{equation} \label{evansderive}
\cE'(\l_0)= v_0^\top \adj(P(\l_0)|Q(\l_0))(P'(\l_0)|Q'(\l_0)) v_0.
\end{equation}
Finally, if $\lambda\in\Omega\setminus\Lambda(U,V)$ then the Evans function $\cE$ in assertions above can be replaced by the normalized Evans function $\cE_0$. 
\end{theorem}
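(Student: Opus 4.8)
The plan is to exploit the elementary linear algebra of the adjugate at a matrix whose determinant vanishes simply. Write $\cY(\l)=(P(\l)\,|\,Q(\l))\in\bbH(\Omega,\C^{d,d})$, so $\cE(\l)=\det\cY(\l)$ by \eqref{defevans1}. The hypothesis that $\l_0$ is a simple zero of $\cE$ means $\cE(\l_0)=0$ and $\cE'(\l_0)\neq0$; in particular $\rank\cY(\l_0)=d-1$, so both $\mathcal{N}(\cY(\l_0))$ and $\mathcal{N}(\cY(\l_0)^\top)$ are one-dimensional, giving the vectors $v_0=\binom{v_{0,1}}{v_{0,2}}$ and $w_0$. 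First I would invoke the standard fact (from $A\adj(A)=\adj(A)A=\det(A)I_d$ together with $\rank\adj(A)=1$ when $\rank A=d-1$) that $\adj(\cY(\l_0))$ is a rank-one matrix whose range is $\mathcal{N}(\cY(\l_0))=\Span\{v_0\}$ and whose kernel is $\mathcal{R}(\cY(\l_0))=\mathcal{N}(\cY(\l_0)^\top)^\bot$; hence $\adj(\cY(\l_0))=c\,v_0 w_0^\top$ for some scalar $c\neq0$, where $w_0$ is normalized by $w_0^\top\cY(\l_0)$-orthogonality in the duality pairing (we are free to absorb a scalar into the choice of $w_0$).

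The key step is to identify the scalar $c$ as $\cE'(\l_0)$. This is Jacobi's formula: differentiating $\det$ gives $\cE'(\l)=\tr\!\big(\adj(\cY(\l))\cY'(\l)\big)$, and at $\l=\l_0$ this reads $\cE'(\l_0)=\tr\!\big(c\,v_0 w_0^\top (P'(\l_0)|Q'(\l_0))\big)=c\,w_0^\top(P'(\l_0)|Q'(\l_0))v_0$. Choosing the normalization so that $w_0^\top(P'(\l_0)|Q'(\l_0))v_0=1$ (nondegeneracy of this pairing is exactly the simplicity of the zero, cf.\ \eqref{simplevw}), we get $c=\cE'(\l_0)$, and then unwinding $\cE'(\l_0)=\tr(\adj(\cY(\l_0))\cY'(\l_0))=v_0^\top\adj(\cY(\l_0))\cY'(\l_0)v_0$ under the hypothesis $v_0^\top v_0=1$ — using that $\adj(\cY(\l_0))$ has range $\Span\{v_0\}$, so $\adj(\cY(\l_0))=\adj(\cY(\l_0))\,v_0 v_0^\top$ when $v_0^\top v_0=1$ — yields exactly \eqref{evansderive}.

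For \eqref{batsing} I would return to the partition \eqref{partadj}, $\adj(\cY(\l))=\binom{R(\l)^\top}{S(\l)^\top}$, so that at $\l_0$ the rank-one identity $\adj(\cY(\l_0))=\cE'(\l_0)v_0 w_0^\top$ partitions as $R(\l_0)^\top=\cE'(\l_0)v_{0,1}w_0^\top$ and $S(\l_0)^\top=\cE'(\l_0)v_{0,2}w_0^\top$. Substituting these into the definitions \eqref{defquasiproj}, $\cY_U(\l_0)=P(\l_0)R(\l_0)^\top=\cE'(\l_0)P(\l_0)v_{0,1}w_0^\top$ and $\cY_V(\l_0)=Q(\l_0)S(\l_0)^\top=\cE'(\l_0)Q(\l_0)v_{0,2}w_0^\top$. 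Finally, $\cY_U(\l_0)+\cY_V(\l_0)=\cE(\l_0)I_d=0$ by \eqref{addproj}, which gives $\cY_U(\l_0)=-\cY_V(\l_0)$ and hence the chain of equalities in \eqref{batsing} (the middle equality $P(\l_0)v_{0,1}w_0^\top=-Q(\l_0)v_{0,2}w_0^\top$ being just $\cY(\l_0)v_0=0$ read column-blockwise).

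The last sentence — replacing $\cE$ by $\cE_0$ on $\Omega\setminus\Lambda(U,V)$ — requires no new argument: on that set $\cE_0=\det(P_0|Q_0)$ with $P_0,Q_0$ the normalized holomorphic representatives from \eqref{holrep2}, and since $\l_0\notin\Lambda(U,V)$ all of the above goes through verbatim with $(P,Q)$ replaced by $(P_0,Q_0)$ and the corresponding adjugate partition; the simple-zero hypothesis for $\cE_0$ at $\l_0$ is equivalent to that for $\cE$ there because $\cE$ and $\cE_0$ differ by the nonvanishing holomorphic factor $\det(P_0^\top\widetilde\Phi)\cdots$ coming from \eqref{normP2} near $\l_0$. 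The main obstacle is purely bookkeeping: keeping the normalization of $w_0$ (and the accompanying scalar) consistent between the rank-one adjugate identity and Jacobi's formula, so that the single constant $\cE'(\l_0)$ appears correctly in all three places; once the adjugate is pinned down as $\cE'(\l_0)v_0 w_0^\top$ everything else is substitution.
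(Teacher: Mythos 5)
Your argument is correct in substance and reaches all three conclusions, but it is organized differently from the paper. The paper's primary route is to apply Keldysh's theorem to the matrix pencil $\cY(\l)=(P(\l)|Q(\l))$, obtaining the Laurent expansion $\cY(\l)^{-1}=(\l-\l_0)^{-1}v_0w_0^\top+h_1(\l)$ with the normalization $w_0^\top\cY'(\l_0)v_0=1$, and then to compare singular parts with $\cY(\l)^{-1}=\cE(\l)^{-1}\adj(\cY(\l))$; this pins down $\adj(\cY(\l_0))=\cE'(\l_0)v_0w_0^\top$, from which \eqref{batsing} and \eqref{evansderive} follow exactly as in your last two steps. You instead avoid Keldysh altogether: you identify $\adj(\cY(\l_0))$ as a rank-one matrix $c\,v_0w_0^\top$ from elementary adjugate identities at a rank-$(d-1)$ matrix, and determine $c=\cE'(\l_0)$ via Jacobi's formula $\cE'(\l)=\tr(\adj(\cY(\l))\cY'(\l))$ together with the same normalization of $w_0$. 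This is more elementary (no operator-theoretic input) and is in fact anticipated by the paper's closing remark that \eqref{evansderive} can be derived by differentiating $\adj(\cY(\l))\cY(\l)=\cE(\l)I$ at $\l_0$; what the Keldysh route buys is consistency with the normalization conventions used throughout Section \ref{s:abst} and a template that generalizes to higher-order zeros.

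One small correction: since $\mathcal{R}(\adj(\cY(\l_0)))=\Span\{v_0\}$, the identity you need is $\adj(\cY(\l_0))=v_0v_0^\top\,\adj(\cY(\l_0))$ (left multiplication by the projector $v_0v_0^\top$ when $v_0^\top v_0=1$), not $\adj(\cY(\l_0))\,v_0v_0^\top$ as written; with $\adj(\cY(\l_0))=\cE'(\l_0)v_0w_0^\top$ the right-multiplied version would produce $\cE'(\l_0)(w_0^\top v_0)\,v_0v_0^\top$, which is not the adjugate in general. With the left-multiplied identity, cyclicity of the trace gives
$\cE'(\l_0)=\tr\bigl(v_0v_0^\top\adj(\cY(\l_0))\cY'(\l_0)\bigr)=v_0^\top\adj(\cY(\l_0))\cY'(\l_0)v_0$, which is \eqref{evansderive}. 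Everything else, including the passage to $\cE_0$ on $\Omega\setminus\Lambda(U,V)$, is fine.
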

\begin{proof} From \cite[Prop.1.8.5]{MM} we have that the kernel of
$(P(\l_0)|Q(\l_0))$ and of its transpose are one-dimensional. Applying Keldysh's
Theorem to the matrix pencil $\cY(\l)=(P(\l)|Q(\l))$ shows that for some 
$h_1\in \bbH(\Omega_1,\C^{d,d})$, $\Omega_1$ is some neighborhood of $\l_0$,
\begin{equation} \label{expand}
 \cY(\l)^{-1} = \frac{1}{\l-\l_0} v_0 w_0^\top + h_1(\l), \quad \l\in \Omega_1.
\end{equation}
Here we have normalized $w_0$ such that
 \[1=w_0^\top \cY'(\l_0)v_0 =w_0^{\top}P'(\l_0) v_{0,1}+w_0^{\top}Q'(\l_0) v_{0,2}.\]
Comparing the singular parts of \eqref{expand} and
\[\cY(\l)^{-1}=\frac{1}{\cE(\l)}\begin{pmatrix} R(\l)^{\top} \\
                                 S(\l)^{\top} \end{pmatrix}
\]
leads to 
\begin{equation} \label{reponeovere}
 \frac{1}{\cE'_0(\l_0)}\begin{pmatrix} R(\l_0)^{\top} \\
                                 S(\l_0)^{\top} \end{pmatrix}
= v_0w_0^{\top} = \begin{pmatrix} v_{0,1}w_0^{\top} \\ v_{0,2}w_0^{\top} 
                  \end{pmatrix}.
\end{equation}
With this and $0=\cY(\l_0)v_0=P(\l_0)v_{0,1} +Q(\l_0)v_{0,2}$ we  arrive at
\begin{align*}
 \cY_U(\l_0)&=P(\l_0)R(\l_0)^{\top}=\cE'_0(\l_0)P(\l_0)v_{0,1}w_0^{\top}\\
&=-\cE'_0(\l_0)Q(\l_0)v_{0,2}w_0^{\top} = -\cY_V(\l_0).
\end{align*}
Finally, normalizing $v_0^{\top}v_0=1$ we obtain \eqref{evansderive} from
\eqref{reponeovere} and the normalizing condition. Note that 
\eqref{evansderive} may also be derived directly by differentiating
$\adj(\cY(\l))\cY(\l)=\cE_0(\l) I_{k}$ at $\l=\l_0$ and then multiplying by 
$v_0^{\top}$ from the left and by $v_0$ from the right.
\hfill\end{proof}

\subsection{Application to general first order differential operators}
\lb{sec3.2}
In this section we apply the previous results to general first order
differential operators with matrices that depend holomorphically
 on the eigenvalue parameter.
We consider a first order $(d\times d)$ matrix differential equation,
\beq\lb{dfens} y'=A(\l,x)y, \quad x\in\bbR, \quad \l\in\Omega,\enq
and the respective pencil of first order differential operators
\begin{align}\label{1stOrderOP}
  F(\l)y=-y'+A(\l,x)y, \quad x\in\bbR,\;\l\in\Omega,
\end{align}
and impose the following assumptions.
\begin{hypothesis}\label{hyp1}
(i)\, Assume that the mapping
\begin{equation}\label{AminBhol}
\Omega\ni\l \mapsto A(\l,\cdot) -B(\cdot)\in L^{\infty}(\R,\C^{d,d}) \quad \text{is holomorphic}
\end{equation}
for some  matrix valued function $B(\cdot)$ such that either $B(\cdot)\in L^1(\R,\C^{d,d})$ or $B(\cdot)$ is
bounded continuous with $\lim_{x\rightarrow \pm \infty}B(x)=0$. 

(ii)\, Assume that the differential  equation \eqref{dfens} has for all
$\lambda\in\Omega$ exponential 
dichotomy on $\bbR_+$ with projections
$P_+(\l,x)$, $x\ge 0$, of rank $k$ and on $\R_-$ with projections
$P_-(\l,x)$, $x\le 0$, of the same rank $k$, i.e. for every
$\lambda\in\Omega$ there exist $C,\alpha>0$ so that
the usual dichotomy estimates in \eqref{dichotomy} below hold.
 
 (iii)\, Assume that  \eqref{dfens} has exponential dichotomy on $\R$
 for some $\l\in\Omega$.
\end{hypothesis}

 Hypothesis \ref{hyp1}(i) yields that
the differential operators $F(\lambda)$ 
are defined on the $\l$-independent domain, cf.\ \cite[Lem.2.8]{DL}, \cite[Ch.3,4]{CL}, given by
\begin{equation} \label{domainF}
\cH=\big\{y\in L^2(\bbR,\bbC^d) : y\in AC_\loc(\bbR,\bbC^d),\; -y'+B(\cdot)y\in L^2(\bbR,\bbC^d)\big\}.
\end{equation}
Moreover, $F(\l)$ is a bounded operator from $\cH$ into the space
\begin{equation} \label{rangeF}
\cK=L^2(\R,\C^d),
\end{equation}
when $\cH$ is equipped with the graph norm
\[ \|y\|_{\cH}^2 = \|y\|_{L^2}^2 +\|-y'+B(\cdot) y \|_{L^2}^2.
\]
  If $B(\cdot)$ is bounded, then $\cH=H^1(\R,\C^d)$, the Sobolev space. The 
completeness of $\cH$ follows from Lemma \ref{lem:A1} saying that $\cH$ is
embedded in the space of continuous  functions vanishing at $\pm\infty$ 
,see also \cite[Lem.2.8]{DL}.

 We note in passing that the differential operator $F(\lambda)$, considered 
as an unbounded operator in $L^2(\R,\C^d)$ with the domain $\cH$, generates a strongly continuous semigroup $\{T^t\}_{t\ge0}$, called the evolution semigroup, see \cite{CL}, defined by the formula $(T^ty)(x)=S(x,x-t,\l)y(x-t)$, $x\in\R$, $t\ge0$. Here and below we denote by $S(x,\xi,\l)$, $x,\xi\in\R$, the propagator (solution operator) of the differential equation 
\eqref{dfens}.

\begin{remarks}\label{rem:afterh1}
(a) Hypothesis \ref{hyp1} (ii) holds if and only if
 the operator $F(\l)$ is Fredholm and its 
Fredholm index is equal to zero for all $\l\in\Omega$. This is Palmer's
Theorem, see \cite{P84,P88}, and also \cite{BAG}, \cite[Thm.3.2]{Sa02},
\cite[Thm.2.6]{SaS01} and \cite{LT,LP,SaS08} for its discussions and
generalizations. In particular, the operator $F(\l)$ is invertible if and only if \eqref{dfens} has an exponential dichotomy on the entire line $\R$.

(b)\, Hypothesis \ref{hyp1} (iii) implies that $\rho(F)\neq\emptyset$, 
cf.\ \cite{BL,SaS08}, \cite[Thm.3.17]{CL}, and thus Hypothesis \ref{hyp1} yields Hypothesis \ref{hyp0} for pencil \eqref{1stOrderOP}.

(c)\ Since the coefficient of the differential equation \eqref{dfens} is holomorphic, the dichotomy projections  $P_\pm(\cdot, x)$, $x\in\R_\pm$, are holomorphic, see, e.g., \cite{BL}, \cite[Lem.A1]{DL}, \cite[Thm.1]{SaS00} and further references therein. 
 \hfill$\Diamond$
\end{remarks}

\begin{example} (Perturbations) \, A typical case where Hypotheses \ref{hyp1} are met
occurs with operators of perturbation form 
\[ [F(\l)y](x)= -y'(x) + (A_0(\l,x)+B(x))y(x), \quad x \in \R,
\]
if the unperturbed operator $F_0(\l)y=-y'+A_0(\l,x)y$
has an exponential dichotomy on $\bbR$ for all $\l\in\Omega$ and with projections
of rank $k$. Then the assumption $B\in L^1(\R,\C^{d,d})$ guarantees that the
exponential dichotomies on half lines hold for the perturbed operator (see, e.g., \cite[Prop.4.1]{Cop}, \cite[Lem.2.13]{GLM07}).
This case applies to the Schr\"odinger equation in Section \ref{s:appl} below. \hfill$\Diamond$
\end{example}
A more specific situation occurs when one linearizes a one-dimensional PDE about a traveling front and re-writes the respective eigenvalue problem as the first order ODE system \eqref{dfens}. Then its coefficient  can be assumed to stabilize at $\pm \infty$.
\begin{example}\lb{front} (Traveling Fronts)\, Consider the case of
  piecewise constant matrices 
\begin{equation}\label{APC}
A_\text{pc}(\l,x)=A_+(\l)\; \text{ for } \; x\ge0\; \text{ and } \;
A_\text{pc}(\l,x)=A_-(\l)\;\text{ for }\; x<0,\end{equation}
where the matrices $A_\pm(\lambda)$ satisfy the following properties: 

 (a) $A_\pm(\lambda)$ analytically depend on $\l\in\Omega$, 

 (b)  $A_\pm(\lambda)$ have no purely imaginary eigenvalues, 

 (c)  $\rank P_{A_+(\l)}=\rank P_{A_-(\l)}$ for the Riesz projections $P_{A_\pm(\l)}$ corresponding to 
 
 \quad\; the part of the spectrum of $A_\pm(\l)$ located in the left half plane.

Under these conditions the (unperturbed) differential equation
$y'=A_\text{pc}(\l,x)y$ has exponential dichotomy on $\bbR_\pm$ and is
Fredholm of index zero by Palmer's result cited in Remark \ref{rem:afterh1}(a) above. This corresponds to fact that $\l$ does not belong to the essential spectrum of 
the underlying differential operator that appears when one linearizes the
PDE about the front, cf.\ \cite{Sa02}. Again, as in the previous example
the perturbed differential equation $y'=(A_\text{pc}(\l,x)+B(x))y$
inherits the dichotomies on half lines if either $B\in L^1(\R,\C^{d,d})$
or $B(\cdot)$ is bounded continuous with $\lim_{x\rightarrow \pm
\infty}B(x)=0$. In both cases we are in the system class described
in Hypothesis \ref{hyp1}. We cite \cite{BL,LT,SaS08} for further references.
 \hfill$\Diamond$\end{example}

As described in Section \ref{s:abst}, we consider an inhomogenous equation
\begin{equation} \label{inhom}
F(\l)y(\l)= \widehat{v} \in L^2.
\end{equation}
As above, let $S(x,\x,\l),x,\xi\in \R,\l\in \Omega$ denote the solution operator of 
$F(\l)$ (the propagator of the differential equation \eqref{dfens}), and let $P_{\pm}(\l,x)$ be the dichotomy projections for \eqref{dfens} on $\R_+,\R_-$
from Hypothesis \ref{hyp1}(ii).
Then we have for some $C,\alpha >0$,
\begin{equation} \label{dichotomy}
\begin{aligned} 
S(x,\x,\l)P_{\pm}(\l,\x)=& P_{\pm}(\l,x)S(x,\x,\l) \quad \text{for} \quad
x,\xi \in \R_{\pm}, \\
\|S(x,\x,\l)P_{\pm}(\l,\x)\| \le& C e^{-\alpha(x-\x)}, \quad \text{for}
\quad \x\le x, \; x,\x \in \R_{\pm},\\
\|S(x,\x,\l)(I-P_{\pm}(\l,\x))\| \le& C e^{-\alpha(\x-x)}, \quad \text{for}
\quad x\le \x, \; x,\x \in \R_{\pm}.
\end{aligned}
\end{equation}
Let $\cE$ be the Evans function from Theorem \ref{evexist} $(i)$
with respect to the equivalence classes $[P(\l)], [Q(\l)]$ for any
choice of holomorphic functions $P,Q$ such that
$\mathcal{R}(P(\l))=\mathcal{R}(P_+(\l,0))$
and $\mathcal{R}(Q(\l))=\mathcal{N}(P_-(\l,0))$, i.e.
\begin{equation} \label{evlambda1}
\cE(\l) \in \cD([P(\l)],[Q(\l)]), \quad \l \in \Omega.
\end{equation}
Alternatively, let $\cE_0$ be the normalized Evans function from Theorem \ref{evexist} $(v)$
with respect to the subspaces $U(\l)=\mathcal{R}(P_+(\l,0))$ and
$V(\l)=\mathcal{N}(P_-(\l,0))$ in $\C^d$, i.e.
\begin{equation} \label{evlambda2}
\cE_0(\l) \in \cD(U(\l),V(\l)), \quad \l \in \Omega\setminus\Lambda(U,V).
\end{equation}
We solve \eqref{inhom} for $\l\in \rho(F)$ in a standard way by using
Green's operators
\begin{equation} \label{greenform}
\begin{aligned}
\big(\cG_+(\l) \hat{v}\big)(x) = & \int_0^{\infty} G_+(x,\x,\l)\hat{v}(\x) d\x, \quad
x \ge 0, \\
\big(\cG_-(\l) \hat{v}\big)(x) = & \int_{-\infty}^{0} G_-(x,\x,\l)\hat{v}(\x) d\x, \quad
x \le 0,
\end{aligned}
\end{equation}
acting on functions $\hat{v}:\R\to\C^d$, with the kernels given by
\begin{equation} \label{greenkernels}
\begin{aligned}
G_+(x,\x,\l)=& \left\{ \begin{array}{cc}
                      S(x,\x,\l)P_+(\l,\x) & 0 \le \x \le x, \\
                      S(x,\x,\l)(P_+(\l,\x)-I) & 0\le x < \x,
                     \end{array}
              \right. \\
G_-(x,\x,\l)=& \left\{ \begin{array}{cc}
                      S(x,\x,\l)P_-(\l,\x) & \x \le x \le 0, \\
                      S(x,\x,\l)(P_-(\l,\x)-I) & x < \xi \le 0.
                     \end{array}
              \right.
\end{aligned}
\end{equation}
Due to the exponential dichotomies the operators $\cG_+,\cG_-$ have
uniform bounds in all spaces $L^p(\R,\C^d)$, $1 \le p \le \infty$, see, e.g., \cite{Cop} or \cite[Sec.4.2]{CL}.
The following piecewise defined function gives the general solution of 
\eqref{inhom} on both half lines (we write $\hat{v}_+=\hat{v}_{|\R_+}$, $\hat{v}_-=\hat{v}_{|\R_-}$
for short),
\begin{equation} \label{solrep}
y(\l,x) =  \begin{cases}
                S(x,0,\l) \eta_+(\l) + \big(\cG_+(\l)\hat{v}_+\big)(x), & x\ge 0, \\
                -S(x,0,\l) \eta_-(\l) + \big(\cG_-(\l)\hat{v}_-\big)(x), & x < 0, \\
                 \end{cases}
\end{equation}
where $\eta_+(\l) \in U(\l)$ and $\eta_-(\l)\in V(\l)$ are arbitrary.
The function defined in \eqref{solrep} is a solution of the differential
equation \eqref{dfens} if the left and right limits of $y(\l)$ at zero coincide. Note
that if $y_{\pm}\in AC_{\loc}(\R_{\pm},\C^d)$ then the function
$y$ defined by
\[
y(x)= \left\{ \begin{array}{cc}
                              y_+(x),& x \ge 0, \\ y_-(x), & x< 0,
                 \end{array} \right.
\]
 is in $AC_{\loc}(\R,\C^d)$ if and only if $y_+(0)=y_-(0)$.
Therefore  we have to find vectors $\eta_+(\l)\in U(\l)$
and $\eta_-(\l)\in V(\l)$ such that
\[ \eta_+(\l)+\eta_-(\l) = \big(\cG_-(\l)\hat{v}_-\big)(0)-\big(\cG_+(\l)\hat{v}_+\big)(0)
=: [\hat{v}]_0.
\]
 By Theorem \ref{projecthol} the sought for vectors are
given by
\begin{equation} \label{etasolve}
\eta_+(\l)= \frac{1}{\cE(\l)} \cY_U(\l) [\hat{v}]_0, \quad
\eta_-(\l)= \frac{1}{\cE(\l)} \cY_V(\l) [\hat{v}]_0.
\end{equation}
Inserting this into \eqref{solrep} gives the solution formula
\begin{equation} \label{solrep2}
y(\l,x) =  \left\{ \begin{array}{cc}
                \frac{1}{\cE_0(\l)}S(x,0,\l) \cY_U(\l) [\hat{v}]_0  + \big(\cG_+(\l)\hat{v}_+\big)(x), & x\ge 0,\\
                -\frac{1}{\cE_0(\l)}S(x,0,\l)\cY_V(\l) [\hat{v}]_0  + \big(\cG_-(\l)\hat{v}_-\big)(x), & x < 0. \\
                 \end{array}
         \right.
\end{equation}
The same formulas hold with $\cE$ replaced by $\cE_0$ provided $\lambda\in\Omega\setminus\Lambda(U,V)$.
It is convenient to  introduce the operators $\cG(\l)$ acting on functions $v:\R\to\C^d$ by
\begin{equation} \label{newrep}
 \big(\cG(\l)v\big)(x) = \left\{ \begin{array}{cc}
                            ( \cG_+(\l) v_{|\R_+})(x), & x \ge 0\\
                             ( \cG_-(\l) v_{|\R_-})(x), & x <0, 
                            \end{array}
                   \right.
\end{equation}
and the matrix valued function $G$ by
\begin{equation} \label{jumpop}
 G(\l,x) = \left\{ \begin{array}{cc}
                            S(x,0,\l)\cY_U(\l), & x \ge 0, \\
                              - S(x,0,\l) \cY_V(\l), & x <0,
                           \end{array}
                   \right.
\end{equation} 
so that if $v_0\in\C^d$ is a given vector then $G(\l,\cdot)v_0:\R\to\C^d$.

Using these notions in \eqref{solrep2} and inserting them into \eqref{defE}
finally leads to the expression \eqref{repEODE} below for 
$E_{jk}(\l),j=1,\ldots,m$, $k=1,\ldots,\ell$. To formulate the result,
 recall that the operators $F(\l)$ act from $\cH$ 
 into $\cK$ , see \eqref{1stOrderOP}, \eqref{domainF},\eqref{rangeF}.
Also, note that $\cH\subset L^2(\R,\C^d)\subset \cH'$ such that a function $w\in L^2(\R,\C^d)$ defines on $\cH$ a linear functional by $\langle w,v\rangle = \langle w,v\rangle_\R$; here and below for $w,v\in L^2(\R,\C^d)$ we denote $\langle w,v\rangle_\R=
\int_{-\infty}^\infty w(x)^\top v(x)\,dx$. Following Section \ref{s:abst}, we now choose linearly independent functions $\hat{v}_k\in L^2(\R,\C^d)$, $k=1,\dots,\ell$, and linearly independent functions $\hat{w}_j\in L^2(\R,\C^d) $, $j=1,\dots, m$, viewed as elements of $\cH'$. Thus, the discussion above can be recorded as follows.
\begin{theorem} \label{formulaE}
Assume Hypotheses \ref{hyp1}, and let all eigenvalues of the operator pencil $F$ inside $\Omega_0$
be simple.  Then the matrix $E(\l)\in \C^{m,\ell}$ from the contour method
\eqref{defE},\eqref{defB01} satisfies the following formula
\begin{equation} \label{repEODE}
E_{jk}(\l) = \frac{1}{\cE(\l)}\langle \widehat{w}_j, G(\l,\cdot) [\hat{v}_k]_0
\rangle_\R + \langle \widehat{w}_j, \cG(\l)\hat{v}_k \rangle_\R,\quad\l\in\Omega, 
\end{equation}
where the vector $[\hat{v}_k]_0= \big(\cG_-(\l)\hat{v}_k\big)(0-)-\big(\cG_+(\l)\hat{v}_k\big)(0+)\in\C^d$ denotes
 a jump quantity at $x=0$, and the operators $\cG,\cG_{\pm}$ and the matrix valued function  $G$ are  defined in \eqref{greenform}, \eqref{newrep} and \eqref{jumpop}. For $\l\in\Omega\setminus\Lambda(U,V)$ formula \eqref{repEODE} holds with $\cE$ replaced by the normalized Evans function $\cE_0$.
\end{theorem}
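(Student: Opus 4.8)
The statement is essentially a bookkeeping summary of the solution representation built up in \eqref{solrep}--\eqref{solrep2}, so the plan is to first verify \eqref{repEODE} pointwise for $\l\in\rho(F)$ and then to promote it to an identity of meromorphic functions on all of $\Omega$.

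For $\l\in\rho(F)$ I would fix $k$ and consider the piecewise function $y(\l,\cdot)$ from \eqref{solrep} with $\widehat v=\widehat v_k$. The parts $\cG_\pm(\l)\widehat v_{k,\pm}$ solve $F(\l)y=\widehat v_k$ on the respective half-lines and lie in $L^2$ by the uniform dichotomy bounds on the Green's operators \eqref{greenform}--\eqref{greenkernels}, while the homogeneous parts $S(\cdot,0,\l)\eta_\pm(\l)$ decay exponentially on $\R_\pm$ precisely because $\eta_+(\l)\in U(\l)=\mathcal{R}(P_+(\l,0))$ and $\eta_-(\l)\in V(\l)=\mathcal{N}(P_-(\l,0))$, cf.\ \eqref{dichotomy}. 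Thus $y(\l,\cdot)\in L^2$ with $-y'+B(\cdot)y\in L^2$, and it belongs to $\cH$ (see \eqref{domainF}) as soon as it lies in $AC_\loc(\R,\C^d)$, which by the remark following \eqref{solrep} is exactly the scalar matching condition $\eta_+(\l)+\eta_-(\l)=[\widehat v_k]_0$. Since $\cE(\l)\neq0$ on $\rho(F)$ by Theorem \ref{evexist}(ii) and Remark \ref{rem:afterh1}(a), Theorem \ref{projecthol} provides $\cY_U,\cY_V$ with $\cE(\l)I_d=\cY_U(\l)+\cY_V(\l)$ and the range/kernel relations \eqref{UVprop}, so $\cE(\l)^{-1}\cY_U(\l)$ is the projection onto $U(\l)$ along $V(\l)$ and the unique admissible splitting of $[\widehat v_k]_0$ is the one recorded in \eqref{etasolve}. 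With this choice $y(\l,\cdot)\in\cH$ solves $F(\l)y=\widehat v_k$, and since $F(\l)$ is invertible it must coincide with the solution $y_k(\l)$ of \eqref{yeq}.

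It then remains to read off $E_{jk}$. Substituting \eqref{etasolve} into \eqref{solrep} and comparing with the definitions \eqref{newrep} of $\cG(\l)$ and \eqref{jumpop} of $G(\l,\cdot)$ identifies $y_k(\l,\cdot)$ with $\cE(\l)^{-1}G(\l,\cdot)[\widehat v_k]_0+\cG(\l)\widehat v_k$; pairing with $\widehat w_j\in L^2(\R,\C^d)\subset\cH'$ through $\langle\cdot,\cdot\rangle_\R$, as in \eqref{defE}, then gives \eqref{repEODE} for every $\l\in\rho(F)$. To extend it to all of $\Omega$ I would argue by meromorphy: the left-hand side extends meromorphically because $F^{-1}(\cdot)$ is finitely meromorphic (\cite[Thm.1.3.1]{MM}), and the right-hand side is holomorphic on $\Omega$ away from the zeros of $\cE$, since by Hypothesis \ref{hyp1}(ii) and Remark \ref{rem:afterh1}(c) the half-line dichotomy projections $P_\pm(\cdot,0)$, hence $\cG_\pm$, $S(\cdot,0,\cdot)$, $\cY_U$ and $\cY_V$, all depend holomorphically on $\l\in\Omega$, so the only singularities of the right-hand side come from the scalar factor $\cE(\l)^{-1}$. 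Because $\rho(F)=\Omega\setminus\s(F)$ with $\s(F)$ discrete, the identity theorem forces \eqref{repEODE} on all of $\Omega$. The version with the normalized Evans function follows by repeating the construction with the objects $\cE_0$, $\cY_U^0$, $\cY_V^0$ supplied by the last assertion of Theorem \ref{projecthol}, which are holomorphic on $\Omega\setminus\Lambda(U,V)$ and again satisfy $\cE_0(\l)I_d=\cY_U^0(\l)+\cY_V^0(\l)$ together with the corresponding range and kernel identities.

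I expect the only genuinely delicate step to be the verification that the candidate \eqref{solrep} with the choice \eqref{etasolve} actually lies in $\cH$: one has to check the exponential decay of the two homogeneous contributions — equivalently, that $\eta_\pm(\l)$ sit in the stable, respectively unstable, subspaces of the half-line dichotomies — together with the $L^2$-bounds on $\cG_\pm$ and enough regularity at $x=0$ to enter $AC_\loc$. Here the dichotomy estimates \eqref{dichotomy}, the domain description \eqref{domainF}, and the embedding of Lemma \ref{lem:A1} are the essential ingredients. The continuation step is then routine, relying once more on the holomorphic dependence on $\l$ of the half-line dichotomy projections.
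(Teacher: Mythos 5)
Your proposal is correct and follows essentially the same route as the paper: the solution of \eqref{yeq} is assembled from the half-line Green's operators plus exponentially decaying homogeneous parts, the matching condition at $x=0$ is solved via the splitting $\cE(\l)I_d=\cY_U(\l)+\cY_V(\l)$ from Theorem \ref{projecthol} to give \eqref{etasolve} and \eqref{solrep2}, and pairing with $\widehat w_j$ yields \eqref{repEODE}. The only addition is your explicit identity-theorem step extending the formula from $\rho(F)$ to all of $\Omega$ as an identity of meromorphic functions, which the paper leaves implicit; that is a welcome clarification rather than a different argument.
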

 Formula \eqref{repEODE} shows how to express the abstract
terms in \eqref{Erepsing} through integral kernels and 
the Evans function (or the normalized Evans function) for first order systems.

To conclude this subsection, we summarize our results for the operator pencil \eqref{1stOrderOP}. In particular,  we apply Theorem \ref{zerobehavior} to recover the singular part of 
the function $E_{jk}(\cdot)$ near a simple eigenvalue $\l_n, n=1,\ldots,\varkappa$. Recall that  $P(\lambda), Q(\lambda)\in\bbH(\Omega,\C^{d,k})$ are chosen as in \eqref{reploc}  with $\Pi_U(\lambda)=P_+(\lambda,0)$ and 
$\Pi_V(\lambda)=I-P_-(\lambda,0)$, and the Evans function is defined by $\cE(\lambda)=\det\big(P(\lambda)\big| Q(\lambda)\big)$.
\begin{theorem}\label{thmSum} Assume Hypotheses \ref{hyp1}. The following assertions are equivalent.

(i)\, $\lambda_0$ is a simple eigenvalue of the operator pencil $F$ \eqref{1stOrderOP};

(ii)\, $\lambda_0$ is a simple root of the Evans function $\cE$;

(iii)\, $\dim \mathcal{N}\big(P(\lambda_0)\big| Q(\lambda_0)\big)=1$;

(iv)\, $\dim\big(\mathcal{R}(P_+(\lambda_0,0))\cap \mathcal{N}(P_-(\lambda_0,0))\big)=1$;

(v)\, There exists a unique up to a scalar multiple exponentially decaying at $\pm\infty$ 

\qquad solution $v$ of \eqref{dfens};

(vi)\, There exists a unique up to a scalar multiple exponentially decaying at $\pm\infty$ 

\qquad solution $w$ of the adjoint 
 to \eqref{dfens} equation $(z^\top)'=-z^\top A(\lambda,x)^\top$.

\noindent Moreover, if $v(0)$ denotes the initial value of the
exponentially decaying solution in $(v)$, then $v(0)\in
\mathcal{R}(P_+(\lambda_0,0))\cap \mathcal{N}(P_-(\lambda_0,0))$ if and only if
\begin{equation}\label{PQPP}
v(0)=P(\lambda_0)v_{0,1}=-Q(\lambda_0)v_{0,2}
\end{equation}
for a vector $v_0=\begin{pmatrix}v_{0,1} \\v_{0,2} \end{pmatrix}$
  from $\mathcal{N}\big(P(\lambda_0)\big| Q(\lambda_0)\big)$. 

In addition, assume that all eigenvalues $\l_n$, $n=1,\ldots,\varkappa$, in $\Omega_0$ are simple, 
let $v_n, w_n^\top$  be the solutions described in assertions $(v), (vi)$  above for each $\lambda_n$, and normalized as indicated in \eqref{normalize}, 
let $\{\widehat{v}_k\}_{k=1}^\ell$, $\{\widehat{w}_j\}_{j=1}^m$ be linearly independent functions in $L^2(\R,\C^d)$ chosen as indicated in Section \ref{s:abst}. Then the singular part of the function $E_{jk}(\cdot)$ defined in \eqref{defE} in the framework of the abstract Keldysh theorem \eqref{eexpress} is given by the formula
\begin{equation}\label{cKf}
E^{\mathrm{sing}}_{jk}(\lambda)=\sum_{n=1}^\varkappa\frac{1}{\l-\l_n}
\langle\widehat{w}_j,v_n\rangle_\R\,\langle w_n,\widehat{v}_k\rangle_\R,
\quad |\l-\l_n| \ll 1, \, n=1,\dots,\varkappa.
\end{equation}
\end{theorem}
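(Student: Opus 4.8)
The plan is to prove all six equivalences by establishing three clusters, (iii)$\Leftrightarrow$(iv), (iv)$\Leftrightarrow$(v)$\Leftrightarrow$(vi), and (i)$\Leftrightarrow$(ii)$\Leftrightarrow$(iii), keeping the elementary geometric statements separate from the passage to algebraic simplicity, which is where the real content lies. First I would dispose of (iii)$\Leftrightarrow$(iv) together with \eqref{PQPP} by linear algebra: for $v_0\in\mathcal{N}\big(P(\l_0)\big|Q(\l_0)\big)$ with blocks $v_{0,1},v_{0,2}$ one has $P(\l_0)v_{0,1}=-Q(\l_0)v_{0,2}$ by the definition of the kernel, so the assignment $v_0\mapsto P(\l_0)v_{0,1}$ sends $\mathcal{N}\big(P(\l_0)\big|Q(\l_0)\big)$ into $\mathcal{R}(P(\l_0))\cap\mathcal{R}(Q(\l_0))=\mathcal{R}(P_+(\l_0,0))\cap\mathcal{N}(P_-(\l_0,0))$; it is injective because $P(\l_0)$ and $Q(\l_0)$ have full column rank, and it is onto that intersection. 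Hence it is an isomorphism between the spaces in (iii) and (iv), and naming the image vector $v(0)$ gives \eqref{PQPP}.

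Next I would derive (iv)$\Leftrightarrow$(v)$\Leftrightarrow$(vi) from dichotomy theory. By \eqref{dichotomy}, the solution $x\mapsto S(x,0,\l_0)\eta$ of \eqref{dfens} decays at $+\infty$ iff $\eta\in\mathcal{R}(P_+(\l_0,0))$ and at $-\infty$ iff $\eta\in\mathcal{N}(P_-(\l_0,0))$, and such a solution lies in the space $\cH$ of \eqref{domainF} because $-y'+B(\cdot)y=(B(\cdot)-A(\l_0,\cdot))y$ plus an exponentially decaying term is in $L^2$ by Hypothesis \ref{hyp1}(i). Since a solution is determined by its value at the origin, evaluation there identifies the space of solutions of \eqref{dfens} decaying at both ends with $\mathcal{R}(P_+(\l_0,0))\cap\mathcal{N}(P_-(\l_0,0))$, which is (iv)$\Leftrightarrow$(v). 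The same reasoning for the adjoint of \eqref{dfens} — which, by the standard duality of dichotomies, has exponential dichotomies on $\R_\pm$ whose stable subspaces at the origin are the annihilators $\mathcal{R}(P_+(\l_0,0))^\bot$ and $\mathcal{N}(P_-(\l_0,0))^\bot$ — identifies its decaying-at-both-ends solutions with $\big(\mathcal{R}(P_+(\l_0,0))+\mathcal{N}(P_-(\l_0,0))\big)^\bot$, whose dimension equals that of $\mathcal{R}(P_+(\l_0,0))\cap\mathcal{N}(P_-(\l_0,0))$ by the rank identity $k+(d-k)=d$; this gives (v)$\Leftrightarrow$(vi).

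The core of the argument is the link between the operator pencil $F$, the matching pencil $\cY(\l)=\big(P(\l)\big|Q(\l)\big)$, and its determinant $\cE$. A function $y\in\cH$ lies in $\mathcal{N}(F(\l_0))$ exactly when it is an $L^2$ solution of \eqref{dfens}, and every $L^2$ solution decays exponentially by the dichotomy; so $\mathcal{N}(F(\l_0))$ and $\mathcal{N}(F(\l_0)^\top)$ are precisely the solution spaces in (v) and (vi), and in particular $\dim\mathcal{N}(F(\l_0))=1$ is (v). For the algebraic content of ``simple'' I would use that $\cE(\l_0)=\det\cY(\l_0)=0$ iff $\mathcal{N}(\cY(\l_0))\neq\{0\}$ and that, by the discussion preceding Theorem \ref{zerobehavior} and \cite[Prop.1.8.5]{MM}, the order of $\l_0$ as a zero of $\cE$ equals the algebraic multiplicity of $\l_0$ as an eigenvalue of $\cY$. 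It then remains to match this with the algebraic multiplicity of $\l_0$ for $F$: a chain of generalized eigenvectors of $F$ at $\l_0$ is a family of exponentially decaying solutions of the hierarchy of variational equations obtained by differentiating $F(\l)v(\l)=0$ in $\l$, and — using the half-line Green's operators $\cG_\pm$ and the holomorphic $\l$-dependence of the dichotomy projections (Remark \ref{rem:afterh1}(c)), exactly as in the representation \eqref{solrep2} — the initial data of such a chain together with their $\l$-Taylor coefficients assemble into a chain of generalized eigenvectors for $\cY$ at $\l_0$, the construction being reversible. This matching step, which is in essence the classical identity of \cite{AGJ90} that the order of the Evans zero equals the algebraic multiplicity of the eigenvalue, is the one I expect to be the main obstacle; once it is in place it forces (i), (ii), (iii) to hold or fail together — with \eqref{evansderive} supplying the concrete criterion $\cE'(\l_0)\neq0$ for the unique chain to have length one — and the chain of equivalences closes.

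Finally, for the ``In addition'' part I would observe that when all $\l_n\in\Omega_0$ are simple the identifications above make $v_n$, the eigenvector of $F(\l_n)$, equal up to a scalar to the decaying solution of \eqref{dfens} from (v), and $w_n$, the dual eigenvector of $F(\l_n)^\top$, equal up to a scalar to the decaying solution from (vi); algebraic simplicity is exactly what allows the normalization $w_n^\top F'(\l_n)v_n=1$ of \eqref{normalize}. Feeding these $v_n,w_n$ into the Keldysh expansion \eqref{KF} and pairing with $\widehat{w}_j,\widehat{v}_k\in L^2(\R,\C^d)$ as in \eqref{eexpress} then recovers the singular part \eqref{cKf}.
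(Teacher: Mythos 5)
Your proof of the equivalences and of \eqref{PQPP} follows the same strategy as the paper: build explicit isomorphisms $\mathcal{N}\big(P(\lambda_0)\big|Q(\lambda_0)\big)\cong\mathcal{R}(P_+(\lambda_0,0))\cap\mathcal{N}(P_-(\lambda_0,0))\cong\mathcal{N}(F(\lambda_0))$ via evaluation at $0$ and the propagator, and handle $(vi)$ by passing to annihilators and the adjoint dichotomy (the paper writes this as the chain of identities $\mathcal{N}\big((P|Q)^\top\big)=\mathcal{R}(P)^\bot\cap\mathcal{R}(Q)^\bot=\mathcal{R}(I-P_+^\top)\cap\mathcal{N}(I-P_-^\top)$, which is your dimension count in disguise). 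You are also more explicit than the paper about where the algebraic content of ``simple'' lives: the paper leans silently on \cite[Prop.1.8.5]{MM} and the remark preceding Theorem \ref{zerobehavior}, whereas you flag the multiplicity matching between $F$ and $\cY(\l)=(P(\l)|Q(\l))$ as the real obstacle and sketch the CGE correspondence; that sketch is sound and standard, though you should note the paper never writes it out either. The genuine divergence is in \eqref{cKf}. You identify the abstract Keldysh eigenvectors $v_n\in\cH$, $w_n\in\cK'$ with the decaying solutions of \eqref{dfens} and its adjoint and then simply read off \eqref{cKf} from \eqref{eexpress}; this is legitimate (the identification of $w_n\in(L^2)'$ with a decaying adjoint solution via the $L^2$ pairing is exactly assertion $(vi)$) and shorter. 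The paper instead computes the singular part from the Evans-function representation \eqref{repEODE}: it uses \eqref{batsing} to get $G(\lambda_n,x)=\cE'(\lambda_n)v_n(x)w_n(0)^\top$, expands $1/\cE(\l)$ near $\l_n$, and then verifies that $w_n(0)^\top[\hat v_k]_0$ equals $\langle w_n,\hat v_k\rangle_\R$ by exhibiting $w_n^\top(\xi)$ piecewise through the dichotomy projections. What the paper's longer route buys — and your shortcut forgoes — is the explicit consistency between the Evans-function formula \eqref{repEODE} and the abstract Keldysh expansion, which is the thematic point of Section \ref{s:evfun}; what your route buys is economy, since \eqref{cKf} as stated is literally the abstract formula once the eigenvectors are identified.
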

\begin{proof}
  The three assertions, $\cE(\l_0)=0$, $\mathcal{R}(P_+(\lambda_0,0))\cap
\mathcal{N}(P_-(\lambda_0,0))\neq\{0\}$, and $\mathcal{N}(F(\l_0))\neq\{0\}$, are equivalent by Theorem
\ref{evexist} and by the dichotomy assumptions in Hypothesis \ref{hyp1}. 
Moreover, $\mathcal{R}(P_+(\lambda_0,0))\cap \mathcal{N}(P_-(\lambda_0,0))$ and
$\mathcal{N}(F(\l_0))$ are isomorphic via the map $v(0)\mapsto
v(\cdot)=S(\cdot,0,\l_0)v(0)$. Thus to see the equivalence of the first
four items in the theorem it suffices to show that the subspaces
$\mathcal{R}(P_+(\lambda_0,0))\cap \mathcal{N}(P_-(\lambda_0,0))$ and $\mathcal{N}\big(P(\lambda_0)\big| Q(\lambda_0)\big)$
are isomorphic as indicated in \eqref{PQPP}.
Let $v_0\in \mathcal{N}(P(\lambda_0)\big| Q(\l_0))$. Since
$\mathcal{R}(P(\lambda_0))=\mathcal{R}(P_+(\l_0,0))$ and
$\mathcal{R}(Q(\lambda_0))=\mathcal{N}(P_-(\lambda_0,0))$, we have
$P(\lambda_0)v_{0,1}\in \mathcal{R}(P_+(\l_0,0))$ and $Q(\lambda_0) v_{0,2}\in
\mathcal{N}(P_-(\lambda_0,0))$, and by the choice of $v_0$ we have
$P(\lambda_0)v_{0,1}=-Q(\lambda_n) v_{0,2}$. Then  $v(0)$ from
\eqref{PQPP} belongs to $\mathcal{R}(P_+(\lambda_0,0))\cap
\mathcal{N}(P_-(\lambda_0,0))$. Conversely, if $v(0)\in
\mathcal{R}(P_+(\lambda_0,0))\cap \mathcal{N}(P_-(\lambda_0,0))$
then $v(0)=P(\l_0)v_{0,1}$ and $v(0)=-Q(\l_0)v_{0,2}$ for some
$v_{0,1}\in\C^k$, $v_{0,2}\in\C^{d-k}$. Letting
$v_0=\begin{pmatrix}v_{0,1} \\v_{0,2} \end{pmatrix}$ yields $v_0\in
  \mathcal{N}\big(P(\lambda_0)\big| Q(\lambda_0)\big)$, as required. To
  begin the proof of  $(vi)$, we remark that $\dim
  \mathcal{N}\big(P(\lambda_0)\big| Q(\lambda_0)\big)$ is equal to $\dim
  \mathcal{N}\big((P(\lambda_0)\big| Q(\lambda_0))^\top\big)$;  also, the following identities hold:
\begin{align*}
  \mathcal{N}\big((P(\lambda_0)\big|& Q(\lambda_0))^\top\big)=
  \mathcal{R}\big((P(\lambda_0)\big| Q(\lambda_0))\big)^\bot=
  \mathcal{R}\big(P(\lambda_0)\big)^\bot\cap \mathcal{R} \big(Q(\lambda_0)\big)^\bot\\&=
  \mathcal{R}(P_+(\lambda_0,0))^\bot\cap \mathcal{N}(P_-(\l_0,0))^\bot=
  \mathcal{N}(P_+(\lambda_0,0)^\top)\cap \mathcal{R}(P_-(\l_0,0)^\top)\\&=
  \mathcal{R}(I-P_+(\lambda_0,0)^\top)\cap \mathcal{N}(I-P_-(\l_0,0)^\top).
\end{align*}
Since \eqref{dfens} has the exponential dichotomy $P_\pm(\l_0,0)$ if and
only if the adjoint equation has the exponential dichotomy
$I-P_\pm(\l_0,0)^\top$ (see, e.g., \cite[Lem.4.5]{BAG},
\cite[Lem.2.4]{DL}, \cite[Rem.3.4]{Sa02}), it follows that the subspaces
$\mathcal{N}\big((P(\lambda_0)\big| Q(\lambda_0))^\top\big)$ and
$\mathcal{N}(F(\l_0)^\top)$ are isomorphic via the map $w(0)\mapsto w(\cdot)^\top$, where $w$ is the exponentially decaying solution of the adjoint equation.

It remains to show \eqref{cKf}. For each $n=1,\dots,\k$
 let $v_n(0)$ be the vector from \eqref{PQPP}, let $v_n(x)=S(x,0,\lambda_n)v_n(0)$ be the decaying on both $\R_+$ and $\R_-$ solution of \eqref{dfens}, and let 
 $w_n(0)$ span $\mathcal{N}((P(\lambda_n)\big| Q(\l_n))^\top)$. We obtain from \eqref{batsing} that \[\cY_U(\lambda_n)=\cE'(\lambda_n)v_n(0)w_n(0)^\top\, \text{ and }\,
\cY_V(\lambda_n)=-\cE'(\lambda_n)v_n(0)w_n(0)^\top,\] and from \eqref{jumpop} that $G(\lambda_n,x)=\cE'(\lambda_n)v_n(x)w_n(0)^\top$.
 Using \eqref{repEODE} and 
$\cE(\l)=\cE'(\l_n)(\l-\l_n) +\mathcal{O}(|\l-\l_n|^2)$ 
yields the singular part
\begin{equation} \label{Erepsing2}
 E^{\mathrm{sing}}_{jk}(\l)=\frac{1}{\l-\l_n}
 \langle \widehat{w}_j,v_n\rangle w_n(0)^{\top}[\hat{v}_k]_0,\quad |\lambda-\l_n| \ll 1.
\end{equation}
Note that 
\begin{align*}
 w_n(0)^{\top}[\hat{v}_k]_0=& w_n^{\top}(0)
\left( \int_{-\infty}^0 S(0,\x,\l)P_-(\l,\x) \hat{v}_k(\x)d\x \right. \\
- & \left. \int_{0}^{\infty}S(0,\xi,\l)(P_{+}(\l,\x)-I) \hat{v}_k(\xi) d\x \right).
\end{align*}
Next we observe that 
\[ w_n^{\top}(\xi)=\left\{ \begin{array}{cc}
                     w_n^{\top}(0)S(0,\x,\l)P_-(\l,\x), & \x \le 0, \\
                     w_n^{\top}(0)S(0,\x,\l)(I-P_+(\l,\x)), & \x > 0
                    \end{array} \right. 
\]
 solves the adjoint equation of \eqref{dfens}, is continuous at $0$, and
decays exponentially in both directions. Using this  in \eqref{Erepsing2}
finally leads us back to the singular part determined by the abstract
Keldysh theorem in  \eqref{eexpress}, where the term $\langle w_n, \hat{v}_k\rangle$ is understood as the integral $\langle w_n, \hat{v}_k\rangle_\R$.
\hfill\end{proof}

%
%


\section{Convergence of eigenvalues for finite boundary value problems}
\label{s:converge}
In this section we provide error estimates of the eigenvalues obtained by the contour
method  in Section \ref{s:abst} when the boundary value problems \eqref{inhom} are
solved approximately on a bounded interval. In the first step we analyze the error of the boundary
value solutions themselves, and in the second step we discuss the implications for
the contour method.
\subsection{Estimates of boundary value solutions}
\label{bvestimates} Using the  setting and notation from 
Section \ref{sec3.2}
we consider the all-line boundary value problem
\[F(\lambda)y(\lambda)=-y'(\lambda,\cdot)+A(\lambda,\cdot)y(\lambda,\cdot)=\widehat{v}\in
L^2(\mathbb{R})\]
 for various values of $\lambda$ and $\widehat{v}$. Our main assumption is the following.
\begin{hypothesis}\label{hyp2} Let
  Hypotheses \ref{hyp1} hold and
  assume that the dichotomy exponent $\alpha>0$ in \eqref{dichotomy} 
  is uniform  for all $\lambda\in\Omega$ and that the dichotomy
  projections $P_\pm(\lambda,x)$ given in Hypothesis \ref{hyp1} (ii)
  are asymptotically constant, that is, 
  $\lim_{x\to\pm\infty}P_\pm(\lambda,x)=P_{\pm}(\lambda)$.
\end{hypothesis}
We approximate \eqref{inhom} by a sequence
of boundary value problems on finite intervals 
$J_N=[x_-^N,x_+^N]$, $N\in \mathbb{N}$, with $-x_-^N,x_+^N\to\infty$ as $N\to\infty$:
\begin{equation}\label{eq:inhomapprox}
  F_N(\lambda)y:=\begin{pmatrix}-y'+A(\lambda,\cdot)y\\
    R_-(\lambda)y(x_-^N)+R_+(\lambda)y(x_+^N) 
  \end{pmatrix}
  =\begin{pmatrix}\widehat{v}_{|J_N}\\0
  \end{pmatrix}\in L^2(J_N)\times \mathbb{C}^d,
\end{equation}
where $R_-,R_+\in \mathbb{H}(\Omega,\mathbb{C}^{d,d})$ are given matrix valued functions. 
We allow 
boundary conditions that are nonlinear in the eigenvalue parameter
in order to cover so-called projection boundary conditions 
 which lead to fast convergence of solutions of \eqref{eq:inhomapprox}
 as $-x_-^N,x_+^N \rightarrow
\infty$, see \eqref{eq:projbc} below.  

We will show how \eqref{eq:inhomapprox} fits into the framework of Section \ref{s:approxop} and apply Theorem \ref{discreteconverge} to obtain 
error estimates. Our approach is largely based on \cite{BR07} where
the case of smooth coefficients and $\dom F(\lambda)=H^1(\R,\C^d)$, the Sobolev space, was analyzed.
For any interval $J \subseteq \R$ introduce the  Banach space
\begin{equation} \label{eq:HJdef}
\mathcal{H}_J=\big\{y\in L^2(J,\mathbb{C}^d):y\in AC_{\mathrm{loc}}(J,\mathbb{C}^d),
-y'+By\in L^2(J,\mathbb{C}^d)\big\}
\end{equation}
with norm
$\|y\|_{\mathcal{H}_J}^2=\|y\|_{L^2(J)}^2+\|-y'+By\|_{L^2(J)}^2$.
Note that $\mathcal{H}_{\R}$ agrees with $\mathcal{H}$ from 
\eqref{domainF}.  Using Lemma \ref{lem:A2} it is easy to see that the space $\cH_J$ is complete in this norm (for later reference we collect
further properties of  $\mathcal{H}_J$ in the Appendix).

The spaces $\mathcal{H}_N$ and $\mathcal{K}_N$ from Section
\ref{s:approxop} are defined by
$\mathcal{H}_N=\mathcal{H}_{J_N}$ and
$\mathcal{K}_N=L^2(J_N,\mathbb{C}^d)\times
\mathbb{C}^d$
with  norms
$\|y\|_{\mathcal{H}_N}$
and $\|(v,r)\|_{\mathcal{K}_N}^2=\|v\|_{L^2}^2+|r|^2$. The spaces 
$\mathcal{H}$, $\mathcal{K}$ are taken as in \eqref{domainF}, \eqref{rangeF}
and mapped into $\mathcal{H}_N$, $\mathcal{K}_N$ by
\begin{equation}\label{defpq}
p_Ny=y|_{J_N} \quad \text{and} \quad 
q_Nv=(v|_{J_N},0).
\end{equation}
Obviously these mappings are linear, bounded uniformly in $N$ and
satisfy condition
(D1) from Section \ref{s:approxop}. 

 By Hypothesis \ref{hyp1} (i),(ii) we have  $F\in \mathbb{H}(\Omega,\mathcal{F}(\mathcal{H},\mathcal{K}))$ since $F(\l)$ is Fredholm for each $\l$ in $\Omega$
and the map $\Omega\ni \lambda \mapsto
A(\lambda,\cdot)-B(\cdot)\in L^\infty(\mathbb{R},\mathbb{C}^{d,d})$
is holomorphic. Moreover,
$\rho(F)\ne\emptyset$  by Hypothesis \ref{hyp1}(iii), and (D2)
follows from the next lemma.
\begin{lemma}\label{lem:D2}
  Under the above assumptions on $A,R_-,R_+$ the operators $F_N$ are
 in
 $ \mathbb{H}(\Omega,\mathcal{F}(\mathcal{H}_N,\mathcal{K}_N))$ 
  and $\sup_{N\in \mathbb{N}}\sup_{\lambda\in
  \cC}\|F_N(\lambda)\|<\infty$ for every compact set $\mathcal{C}\subset
  \Omega$.
\end{lemma}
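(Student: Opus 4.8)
The plan is to realize $F_N(\lambda)$, for each fixed $N\in\mathbb{N}$ and $\lambda\in\Omega$, as a finite-rank (hence compact) perturbation of an explicit isomorphism $T_N(\lambda)\colon\mathcal{H}_N\to L^2(J_N)\times\mathbb{C}^d$; this gives at once that $F_N(\lambda)$ is Fredholm of index $0$. Holomorphy in $\lambda$ and the uniform norm bound are then obtained by splitting $F_N(\lambda)$ into a principal part (independent of $\lambda$ and, as a formula, of $N$) plus lower-order pieces and estimating each separately.

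The first step is the splitting. Using Hypothesis \ref{hyp1}(i), write $A(\lambda,\cdot)=B(\cdot)+(A(\lambda,\cdot)-B(\cdot))$, so that the first component of $F_N(\lambda)y$ becomes $-y'+A(\lambda,\cdot)y=L_0y+C_N(\lambda)y$, where $L_0y=-y'+By$ satisfies $\|L_0\|_{\mathcal{L}(\mathcal{H}_N,L^2(J_N))}\le 1$ directly from the definition of the graph norm on $\mathcal{H}_N$, and $C_N(\lambda)y=((A(\lambda,\cdot)-B)y)|_{J_N}$ is multiplication by an $L^\infty$ matrix, hence bounded with $\|C_N(\lambda)\|\le\|A(\lambda,\cdot)-B\|_{L^\infty(\mathbb{R})}$. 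This decomposition is what makes the estimates survive the case $B\in L^1$, where $y'$ need not lie in $L^2$ although $-y'+By$ does. The second component $B_N(\lambda)y=R_-(\lambda)y(x_-^N)+R_+(\lambda)y(x_+^N)$ is well defined and bounded because the endpoint evaluations $y\mapsto y(x_\pm^N)$ are bounded on $\mathcal{H}_N$; here I would invoke the embedding properties of the spaces $\mathcal{H}_J$ from the Appendix (Lemma \ref{lem:A2}), in the form that there is a constant $c_{\mathcal{H}}$, \emph{independent of $N$}, with $\sup_{x\in J_N}|y(x)|\le c_{\mathcal{H}}\|y\|_{\mathcal{H}_N}$.

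For the Fredholm property I would put $T_N(\lambda)y=(-y'+A(\lambda,\cdot)y,\ y(x_-^N))$. Since $A(\lambda,\cdot)\in L^1(J_N)$ (because $A(\lambda,\cdot)-B\in L^\infty$, $B\in L^1$ or $B$ bounded, and $J_N$ is bounded), Carath\'eodory theory for the linear system $y'=A(\lambda,\cdot)y-v$ on $J_N$ shows that for each $(v,\xi)\in L^2(J_N)\times\mathbb{C}^d$ there is a unique $y\in AC(J_N)$ with $T_N(\lambda)y=(v,\xi)$, and $y\in\mathcal{H}_N$ because $-y'+By=v-(A(\lambda,\cdot)-B)y\in L^2(J_N)$. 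Thus $T_N(\lambda)$ is a bounded bijection between the Banach spaces $\mathcal{H}_N$ and $L^2(J_N)\times\mathbb{C}^d$, hence an isomorphism by the open mapping theorem. Since $(F_N(\lambda)-T_N(\lambda))y=(0,\ (R_-(\lambda)-I)y(x_-^N)+R_+(\lambda)y(x_+^N))$ has range in the $d$-dimensional subspace $\{0\}\times\mathbb{C}^d$, $F_N(\lambda)$ is a finite-rank perturbation of an isomorphism, hence Fredholm of index $0$, i.e.\ $F_N(\lambda)\in\mathcal{F}(\mathcal{H}_N,\mathcal{K}_N)$. Holomorphy of $\lambda\mapsto F_N(\lambda)\in\mathcal{L}(\mathcal{H}_N,\mathcal{K}_N)$ follows by writing $F_N(\lambda)$ as a finite sum of compositions of the $\lambda$-independent bounded maps ($L_0$, the two coordinate inclusions into $\mathcal{K}_N$, the evaluations at $x_\pm^N$) with the holomorphic families $\lambda\mapsto A(\lambda,\cdot)-B\in L^\infty(\mathbb{R})$ and $\lambda\mapsto R_\pm(\lambda)\in\mathbb{C}^{d,d}$, using that multiplication $L^\infty(\mathbb{R})\to\mathcal{L}(\mathcal{H}_N,L^2(J_N))$ is bounded and linear; these operations preserve holomorphy.

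Combining the bounds from the splitting, for any compact $\mathcal{C}\subset\Omega$ one gets
\[
\|F_N(\lambda)\|_{\mathcal{L}(\mathcal{H}_N,\mathcal{K}_N)}^2\le(1+\|A(\lambda,\cdot)-B\|_{L^\infty(\mathbb{R})})^2+c_{\mathcal{H}}^2(\|R_-(\lambda)\|+\|R_+(\lambda)\|)^2 ,
\]
and since $\lambda\mapsto A(\lambda,\cdot)-B$ is continuous into $L^\infty(\mathbb{R})$ and $R_\pm$ are continuous into $\mathbb{C}^{d,d}$, the right-hand side is bounded uniformly for $\lambda\in\mathcal{C}$, while $c_{\mathcal{H}}$ does not depend on $N$; this yields $\sup_N\sup_{\lambda\in\mathcal{C}}\|F_N(\lambda)\|<\infty$. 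The only genuinely delicate ingredient is precisely this uniformity in $N$ of the endpoint-evaluation bound on $\mathcal{H}_{J_N}$ — equivalently, that $\mathcal{H}_J$ embeds continuously into $C(J)$ with a constant that does not deteriorate as $|J|\to\infty$; this rests on the decay/integrability structure built into the graph norm through $B$, rather than on compactness of $J$, and is exactly what the Appendix lemmas are designed to supply. Everything else is routine ODE theory and bounded-operator bookkeeping.
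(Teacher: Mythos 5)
Your proof is correct and follows essentially the same route as the paper: the same splitting $-y'+A(\lambda,\cdot)y=(-y'+By)+(A(\lambda,\cdot)-B)y$ estimated via the graph norm and the $N$-uniform embedding of Lemma \ref{lem:A2}, with holomorphy and the uniform bound on compacta inherited from the holomorphy of $A,R_-,R_+$. The only difference is that you spell out the Fredholm index-zero property (as a finite-rank perturbation of the Carath\'eodory solution isomorphism $T_N(\lambda)$), which the paper simply cites as a well-known fact for finite boundary value problems.
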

\begin{proof}
  Let $y\in \mathcal{H}_N$ and $\lambda \in \Omega$. Then, by \eqref{AminBhol}
and Lemma \ref{lem:A2},
  \begin{multline*}
    \|F_N(\lambda)y\|_{\mathcal{K}_N}^2=
    \|-y'+A(\lambda,\cdot)y\|_{L^2(J_N)}^2+
    |R_-(\lambda)y(x_-^N)+R_+(\lambda)y(x_+^N)|^2\\
    \le 2 \big(
    \|y\|_{\mathcal{H}_N}^2+\|(A(\lambda,\cdot)-B(\cdot))y\|_{L^2(J_N)}^2 \big)
    + 2 \big( \|R_-(\lambda)\|^2+\|R_+(\lambda)\|^2
    \big)\|y\|_{L^\infty}^2\\
    \le c \big( \|A(\lambda,\cdot)-B(\cdot)\|_{L^\infty}^2 +
    \|R_-(\lambda)\|^2+\|R_+(\lambda)\|^2\big)\|y\|_{\mathcal{H}_N}^2.
  \end{multline*}
   From the holomorphy
  of $A,R_-,R_+$ we obtain uniform bounds for
  $\|F_N(\lambda)\|$ on compact sets $\mathcal{C}\subset \Omega$ as well
  as holomorphy of $\lambda\mapsto F_N(\lambda)$ for all $N\in
  \mathbb{N}$. Finally, the Fredholm property is
  a well-known fact for finite boundary value problems.
\hfill\end{proof}

For the application of Theorem \ref{discreteconverge} it remains to
verify (D3). Let $V_-^s(\lambda)$ be a basis of
the range of $P_{-}(\lambda)$ and let $V_+^u(\lambda)$ be a basis
of the kernel of $P_{+}(\lambda)$.

\begin{proposition}\label{prop:D3}
  Let Hypothesis \ref{hyp2} hold and 
  assume that matrices $R_\pm(\l)$ for all $\lambda\in\Omega$ satisfy
  \begin{equation}
    \det\big((R_-(\lambda)V_-^s(\lambda) \big| R_+(\lambda)
    V_+^u(\lambda))\big) \ne 0.
    \label{eq:detcond}
  \end{equation}
  Then $F_N(\lambda)$ converges regularly to $F(\lambda)$ for all
  $\lambda\in\Omega$.
\end{proposition}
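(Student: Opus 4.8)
The plan is to verify conditions (D3)(a) and (D3)(b) from Section~\ref{s:approxop} for the pair $F_N, F$ defined in \eqref{eq:inhomapprox}, \eqref{1stOrderOP}, using the restriction/extension maps $p_N, q_N$ from \eqref{defpq}. Consistency (D3)(a) is the easy part: for fixed $y\in\cH$ one has
$F_N(\lambda)p_N y - q_N F(\lambda)y = \big(0,\, R_-(\lambda)y(x_-^N)+R_+(\lambda)y(x_+^N)\big)$,
since the differential part of $F_N(\lambda)$ applied to $y|_{J_N}$ agrees exactly with $(F(\lambda)y)|_{J_N}$. By Lemma~\ref{lem:A1} (and the properties of $\cH_J$ collected in the Appendix) any $y\in\cH$ is continuous and vanishes at $\pm\infty$, so $y(x_\pm^N)\to 0$ as $-x_-^N, x_+^N\to\infty$; together with the local boundedness of $R_\pm$ this gives $\|F_N(\lambda)p_N y - q_N F(\lambda)y\|_{\cK_N}\to 0$, which is (D3)(a).

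The substance is the regularity condition (D3)(b): given $\lambda\in\Omega$, a subsequence $v_N\in\cH_N$, $N\in\N'$, with $\|v_N\|_{\cH_N}$ bounded and $\|F_N(\lambda)v_N - q_N y\|_{\cK_N}\to 0$ for some $y\in\cK$, produce a further subsequence and a $v\in\cH$ with $\|v_N - p_N v\|_{\cH_N}\to 0$. The hypothesis says $-v_N' + A(\lambda,\cdot)v_N \to y$ in $L^2(J_N)$ and $R_-(\lambda)v_N(x_-^N)+R_+(\lambda)v_N(x_+^N)\to 0$ in $\C^d$. The strategy is the standard dichotomy/variation-of-constants argument: on $\R_+$ and $\R_-$ write $v_N$ as the sum of the Green's-operator term $\cG_\pm(\lambda)$ applied to the (extended-by-zero) right-hand side plus a homogeneous solution $S(\cdot,x_\pm^N,\lambda)\eta_\pm^N$ carried in from the endpoint; the dichotomy estimates \eqref{dichotomy}, uniform in $\lambda$ by Hypothesis~\ref{hyp2}, control these pieces. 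Using that the projections $P_\pm(\lambda,x)$ are asymptotically constant, the surviving endpoint data lie (up to exponentially small errors) in $\cR(P_-(\lambda))$ at $x_-^N$ and in $\cN(P_+(\lambda))$ at $x_+^N$, i.e. are of the form $V_-^s(\lambda)c_-^N$, $V_+^u(\lambda)c_+^N$. The boundary condition then reads, to leading order, $\big(R_-(\lambda)V_-^s(\lambda)\,\big|\,R_+(\lambda)V_+^u(\lambda)\big)\binom{c_-^N}{c_+^N}\to 0$, and the determinant condition \eqref{eq:detcond} forces $c_\pm^N\to 0$. Hence the homogeneous contributions vanish and $v_N$ converges, in the $\cH_N$-norm, to $p_N v$ where $v = \cG(\lambda) y$ corrected by the (vanishing) endpoint terms is the natural all-line solution; extracting the stated subsequence is then immediate.

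The main obstacle is the bookkeeping in this last step: one must (i) pass from the finite-interval Green's functions to the half-line ones $\cG_\pm(\lambda)$ of \eqref{greenform} while tracking boundary-layer terms of size $O(e^{-\alpha|x_\pm^N|})$, (ii) convert convergence in the graph norm $\|\cdot\|_{\cH_N}$ rather than just $L^2$, which is where the estimates for $\cH_J$ from the Appendix (continuous embedding, uniform trace bounds) are needed, and (iii) handle the $\lambda$-dependent, possibly nonlinear boundary matrices $R_\pm(\lambda)$ together with the asymptotically-constant-but-not-constant projections, so that \eqref{eq:detcond} can be invoked cleanly. Once (D1)--(D3) are in place, the conclusion ``$F_N(\lambda)$ converges regularly to $F(\lambda)$'' is exactly the content of (D3), and Theorem~\ref{discreteconverge} then applies to yield the error estimates used in Section~\ref{s:converge}. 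This is essentially the argument of \cite{BR07} adapted to the present weaker regularity on $A$ and to the space $\cH$ in place of $H^1(\R,\C^d)$.
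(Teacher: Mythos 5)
Your treatment of (D3)(a) is exactly the paper's: the consistency defect reduces to the boundary term $R_-(\lambda)y(x_-^N)+R_+(\lambda)y(x_+^N)$, which tends to zero by Lemma \ref{lem:A1}. Your strategy for (D3)(b) — variation of constants with the dichotomy Green's kernels and the determinant condition \eqref{eq:detcond} to control the free data at the endpoints — is also in the right spirit, and the claim that the boundary system in $(c_-^N,c_+^N)$ forces $c_\pm^N\to 0$ can indeed be made rigorous (the Green's-kernel contribution to $v_N(x_\pm^N)$ vanishes because the tails of $y\in L^2(\R,\C^d)$ vanish and $F_N(\lambda)v_N\to q_Ny$).

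There is, however, a genuine gap in your identification of the limit. Killing $c_\pm^N$ removes only the homogeneous solutions that decay \emph{toward the interior} from the endpoints, i.e.\ the component of $v_N$ lying in $\mathcal{N}(P_+(\lambda,x_+^N))$ at $x_+^N$ and in $\mathcal{R}(P_-(\lambda,x_-^N))$ at $x_-^N$. The complementary homogeneous component — $S(\cdot,0,\lambda)P_+(\lambda,0)v_N(0)$ on $\R_+$ and its analogue on $\R_-$, anchored at $x=0$ — is exponentially small at the endpoints (hence invisible to the boundary condition) but of order one near $x=0$, and it does \emph{not} vanish in the limit. Consequently the limit is not $\cG(\lambda)y$: that function has a jump $(\cG_-(\lambda)y_{|\R_-})(0)-(\cG_+(\lambda)y_{|\R_+})(0)$ at $x=0$ and is generically not even in $\cH$, whereas every $v_N$ is continuous. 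The correct candidate is \eqref{eq:yallline}, namely $\pm G_\pm(\cdot,0,\lambda)y_0$ plus the Green's terms, where $y_0$ is a limit point of the bounded sequence $v_N(0)$ (bounded by Lemma \ref{lem:A2}); extracting that limit point is exactly why (D3)(b) is stated with a subsequence, and it is indispensable when $\lambda\in\sigma(F)$, where no all-line solution of $F(\lambda)v=y$ exists and the limit may contain an eigenfunction component (take $y=0$ and $v_N$ normalized approximate eigenfunctions: your argument would yield $v=0$, contradicting $\|v_N\|_{\cH_N}=1$). The paper's proof, following \cite{BR07} for the $L^2$ part, is organized around precisely this point via the representation \eqref{eq:ynrep} anchored both at $0$ and at the endpoints. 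Finally, the upgrade from $L^2$ to the graph norm, which you only gesture at, is carried out in the paper through the identity $-v_N'+Bv_N=F_N(\lambda)v_N+(B-A(\lambda,\cdot))v_N$ together with a Cauchy argument on $[-1,1]$ showing that the limit genuinely lies in $\cH$; this step cannot be skipped, since \cite{BR07} only delivers $L^2$ convergence.
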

\begin{proof}
  Let $y\in \mathcal{H}$, $\lambda\in\Omega$. Then
  \begin{equation}
    \|F_N(\lambda)p_Ny-q_NF(\lambda)y\|_{\mathcal{K}_N}^2\le 
    2\bigl( \|R_-(\lambda)\|^2+\|R_+(\lambda)\|^2
    \bigr)(|y(x_-^N)|^{2}+|y(x_+^N)|^2),
  \end{equation}
  where the right-hand side converges to $0$ as $N\to\infty$ by Lemma
  \ref{lem:A1}. This proves part (a) of (D3).

  Let $\lambda\in\Omega$ be fixed. Consider a subsequence
  $y_N\in \mathcal{H}_N$, $N\in \N'$, with bounded 
$\|y_N\|_{\mathcal{H}_N}$ and assume there is $v\in
  \mathcal{K}$ with
  $\lim_{N\to\infty}\|F_N(\lambda)y_N-q_Nv\|_{\mathcal{K}_N}=0$.
  Set $(v_N,r_N):=F_N(\lambda)y_N\in \mathcal{K}_N$ and note that $y_N$ can be
  written similarly to \eqref{solrep}:
  \begin{equation} \label{eq:ynrep}
    y_N(x)=
    \begin{cases}
      G_+(x,0,\lambda)y_N(0)-G_+(x,x_+^N,\lambda)y_N(x_+^N)
      +[\mathcal{G}_+^N(\lambda)v_N](x),&x\ge 0,\\
      -G_-(x,0,\lambda)y_N(0)+G_-(x,x_-^N,\lambda)y_N(x_-^N)
      +[\mathcal{G}_-^N(\lambda)v_N](x),&x\le 0,
    \end{cases}
  \end{equation}
  where $G_\pm$ are defined in \eqref{greenkernels} and, cf.\ \eqref{greenform},
  \begin{align*}
    \big(\mathcal{G}_+^N(\lambda)v_N\big)(x)&
    =\int_0^{x_+^N}G_+(x,\xi,\lambda)v_N(\xi)\,d\xi,\quad x_{+}^N\ge
    x\ge 0,\\
    \big(\mathcal{G}_-^N(\lambda)v_N\big)(x)&
    =\int_{x_-^N}^0 G_-(x,\xi,\lambda)v_N(\xi)\,d\xi,\quad x_-^N\le x\le 0.
  \end{align*}
  By Lemma \ref{lem:A2} the boundedness of
  $\|y_N\|_{\mathcal{H}_N}$ implies  boundedness and thus compactness of the sequence $(y_N(0))_{N\in \N'}\subset\C^d$, i.e.\
  $\lim_{\N''\ni
  N\to\infty}y_N(0)=y_0$ for some subsequence $\mathbb{N}''\subset
  \mathbb{N}'$ and some $y_0\in \mathbb{C}^d$.
  Let 
  \begin{equation}\label{eq:yallline}
    y(x)=
    \begin{cases}
      y_+(x)=G_+(x,0,\lambda)y_0
      +[\mathcal{G}_+(\lambda)v_{|\mathbb{R}_+}](x),&x\ge 0,\\
      y_-(x)=-G_-(x,0,\lambda)y_0
      +[\mathcal{G}_-(\lambda)v_{|\mathbb{R}_-}](x),&x< 0.
    \end{cases}
  \end{equation}
  Now we follow verbatim the proof of \cite[Thm.2.1]{BR07} until
  \cite[(2.11)]{BR07} to conclude
  \begin{equation}\label{eq:L2convyn}
    \lim_{\mathbb{N}''\ni N\to\infty}\|y_N-y_{|J_N} \|_{L^2(J_N)}^2= 0.
  \end{equation}
  Note that this step is crucial. It uses  the determinant condition 
  \eqref{eq:detcond} as well as the representation \eqref{eq:ynrep} and the
  exponential dichotomies. It remains to prove 
  $\|y_N-p_Ny\|_{\mathcal{H}_N}\to 0$ as $\mathbb{N}''\ni N\to\infty$,
  for which the arguments in \cite{BR07} do no longer apply.

  By construction $y_{\pm}\in AC_\loc(\mathbb{R}_\pm,\mathbb{C}^d)\cap
  L^2(\mathbb{R}_\pm,\mathbb{C}^d)$ and
  \begin{equation}
    -y_{\pm}'+A(\lambda,\cdot)y_{\pm}=v_{|\R_{\pm}}
  \end{equation}
  holds in $L^2(\mathbb{R}_{\pm},\mathbb{C}^d)$. Therefore, the function
  defined by
  \begin{equation} \label{eq:zdef}
    z(x) = 
    \begin{cases} -y'_+ +B(x) y_+(x),& x\ge 0, \\
      -y'_- +B(x) y_-(x),& x< 0, 
    \end{cases}
  \end{equation}
  is in $L^2(\R,\mathbb{C}^d)$ and satisfies $z= v+(B(\cdot)-A(\lambda,\cdot))y$.
  Using this we obtain
  \begin{multline}\label{eq:Hconvyn}
    \|-y_N'+B(\cdot)y_N- z_{|J_N}\|_{L^2(J_N)}
    = \|-y_N'+B(\cdot)y_N -(v+(B(\cdot)-A(\lambda,\cdot))y)_{|J_N}\|_{L^2(J_N)} \\
    \le  \|F_N(\lambda)y_N-q_Nv\|_{\mathcal{K}_N}+
    \|A(\lambda,\cdot)-B(\cdot)\|_{L^\infty}
    \|y_N-y_{|J_N}\|_{L^2(J_N)},
  \end{multline}
  where the right-hand side converges to zero as
  $\mathbb{N}''\ni N\to \infty$ by \eqref{eq:L2convyn} and our assumption.
  Without loss of
  generality we may assume $J_N\supset [-1,1]$ for all $N\in \mathbb{N}''$. 
Repeating
the estimate \eqref{eq:Hconvyn} with $[-1,1]$ instead of $J_N$ shows that
  $({y_N}|_{[-1,1]})_{N\in \mathbb{N}''}$ is a Cauchy sequence in
  $\mathcal{H}_{[-1,1]}$.
  Therefore, its limit, which coincides with $y_{|[-1,1]}$,
  is an element of $AC([-1,1])$. This allows us to conclude
$z=-y'+B(\cdot)y$ from \eqref{eq:zdef}, so that equations 
 \eqref{eq:L2convyn} and \eqref{eq:Hconvyn} prove our final assertion.
\hfill\end{proof}

The above results show that the abstract convergence result, Theorem
\ref{discreteconverge}, applies:
\begin{theorem}\label{thm:approxbybvp}
  Let the assumptions of Proposition \ref{prop:D3} hold.
  Then for any compact set $\mathcal{C}\subset \rho(F)\cap \Omega$ and any
  $\widehat{v}\in
  \mathcal{K}$ there is $N_0\in \mathbb{N}$ such that for all $N\ge N_0$
  and $\lambda\in\mathcal{C}$ the boundary value
  problem \eqref{eq:inhomapprox} has a unique solution
  $y_N(\lambda,\cdot)\in \mathcal{H}_N$. Furthermore, for some constant
  $C$, independent of $\widehat{v}$,
  \begin{equation}\label{eq:bvperror}
    \sup_{\lambda\in\mathcal{C}}
    \|y_N(\lambda,\cdot)-p_Ny(\lambda,\cdot)\|_{\mathcal{H}_N}\le
    C \sup_{\lambda\in
    \mathcal{C}}|R_-(\lambda)y(\lambda,x_-^N)+R_+(\lambda)y(\lambda,x_+^N)|,
  \end{equation}
  where $y(\lambda,\cdot)\in \mathcal{H}$ solves \eqref{inhom}.
\end{theorem}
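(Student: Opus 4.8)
The plan is to recognize this as a direct application of the abstract discrete-approximation machinery of Section \ref{s:approxop}: first verify that the hypotheses (D1)--(D3) hold for the family $F_N$, then invoke Theorem \ref{discreteconverge}(i) on the compact set $\mathcal{C}\subset\rho(F)\cap\Omega$ and the fixed right-hand side $\widehat v$, and finally identify the consistency error appearing in \eqref{estimatesolution} with the boundary residual in \eqref{eq:bvperror}. Hypothesis (D1) has already been recorded right after \eqref{defpq}, since the restriction maps $p_N,q_N$ are linear, uniformly bounded, and norm-preserving in the limit. Hypothesis (D2) is precisely Lemma \ref{lem:D2} together with the facts that $F\in\mathbb{H}(\Omega,\mathcal{F}(\mathcal{H},\mathcal{K}))$ (Hypothesis \ref{hyp1}(i),(ii)) and $\rho(F)\ne\emptyset$ (Hypothesis \ref{hyp1}(iii)). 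Hypothesis (D3) --- regular convergence of $F_N(\lambda)$ to $F(\lambda)$ for every $\lambda\in\Omega$ --- is exactly the content of Proposition \ref{prop:D3}, whose determinant assumption \eqref{eq:detcond} is part of the hypotheses of the present theorem.

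With (D1)--(D3) in force, Theorem \ref{discreteconverge}(i) applies verbatim: there is $N_0\in\mathbb{N}$ so that for all $N\ge N_0$ and all $\lambda\in\mathcal{C}$ the equation $F_N(\lambda)y_N=q_N\widehat v$ has a unique solution $y_N(\lambda,\cdot)\in\mathcal{H}_N$, and
\[
\sup_{\lambda\in\mathcal{C}}\|y_N(\lambda,\cdot)-p_Ny(\lambda,\cdot)\|_{\mathcal{H}_N}
\le C\sup_{\lambda\in\mathcal{C}}\|F_N(\lambda)p_Ny(\lambda,\cdot)-q_NF(\lambda)y(\lambda,\cdot)\|_{\mathcal{K}_N},
\]
where $y(\lambda,\cdot)\in\mathcal{H}$ solves \eqref{inhom}. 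By the definition \eqref{defpq} of $q_N$, the equation $F_N(\lambda)y_N=q_N\widehat v$ is literally the boundary value problem \eqref{eq:inhomapprox}, which settles the first assertion. For the constant $C$ one notes that it is the stability constant of the family $F_N$ on $\mathcal{C}$ coming from regular convergence, and that $y_N-p_Ny$ solves $F_N(\lambda)(y_N-p_Ny)=q_NF(\lambda)y-F_N(\lambda)p_Ny$, so the same $C$ controls $y_N-p_Ny$ irrespective of the particular right-hand side; hence $C$ is independent of $\widehat v$.

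It then only remains to compute the consistency error. Since $y(\lambda,\cdot)$ solves $-y'+A(\lambda,\cdot)y=\widehat v$ on all of $\mathbb{R}$, its restriction to $J_N$ satisfies the interior equation defining $F_N$, so the $L^2(J_N)$-component of $F_N(\lambda)p_Ny(\lambda,\cdot)-q_NF(\lambda)y(\lambda,\cdot)$ vanishes identically; the only surviving contribution is the boundary component $R_-(\lambda)y(\lambda,x_-^N)+R_+(\lambda)y(\lambda,x_+^N)$. By the definition of the norm on $\mathcal{K}_N$ the right-hand side of the displayed estimate therefore equals $C\sup_{\lambda\in\mathcal{C}}|R_-(\lambda)y(\lambda,x_-^N)+R_+(\lambda)y(\lambda,x_+^N)|$, which is exactly \eqref{eq:bvperror}. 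In this argument the only genuinely substantive ingredient is Proposition \ref{prop:D3} (the compactness/collective-compactness-type condition (D3)(b) for $F_N$, which relies on the exponential dichotomies, the solution representation \eqref{eq:ynrep}, and the determinant condition \eqref{eq:detcond}); once that is granted, the steps above are bookkeeping, and I expect no further obstacle.
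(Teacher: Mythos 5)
Your proposal is correct and follows exactly the route the paper takes: the text preceding Theorem \ref{thm:approxbybvp} verifies (D1) after \eqref{defpq}, establishes (D2) via Lemma \ref{lem:D2}, and establishes (D3) via Proposition \ref{prop:D3}, and the theorem is then stated as an immediate consequence of Theorem \ref{discreteconverge}(i), with the consistency error reducing to the boundary residual precisely as you observe. Your remark on why $C$ is independent of $\widehat{v}$ is a small but welcome clarification that the paper leaves implicit.
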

From the well known decay $|y(\lambda,x_{\pm}^N)|\to 0$ as $N\to \infty$
(e.g. see the proof of \cite[Thm.3.2]{BL}) 
estimate \eqref{eq:bvperror} implies
 that the solutions $y_N(\lambda,\cdot)$ of the
finite interval problems converge uniformly in $\lambda\in
\mathcal{C}$ to the solution of the problem on the line.

We will now concentrate on the differential equation \eqref{dfens} with the coefficient of the special perturbative structure which appears in the case of traveling fronts with asymptotic hyperbolic rest states, see Example \ref{front}. 
Specifically, let us consider a first order operator of the form \eqref{1stOrderOP} where
  \begin{equation}\label{tfe}
  A(\lambda,x)=A_{\text{pc}}(\l,x)+B(x), \quad x\in\R,\end{equation} with $A_{\text{pc}}(\l,x)$ defined in \eqref{APC}. We impose the following assumptions.
  
 
  
  \begin{hypothesis}\label{hyp3}
    The differential equation \eqref{dfens} with $A(\l,x)$ from \eqref{tfe} satisfies Hypothesis
    \ref{hyp1} and Hypothesis \ref{hyp2} with the uniform
    exponential estimate, 
    \begin{equation}\label{eq:ProjConv}
      \|P_\pm(\lambda,x)-P_{\pm}(\lambda)\|\le c e^{-\alpha|x|}, \quad x\in\R_\pm,
    \end{equation}
    for all $\lambda\in\Omega$, where $\alpha$ is the exponent from \eqref{dichotomy} of the
    exponential dichotomy on $\R_\pm$ for \eqref{dfens}. The projections $P_\pm(\l)$ depend analytically
    on $\lambda\in\Omega$.
  \end{hypothesis}

  A typical situation where Hypothesis \ref{hyp3} is satisfied, 
occurs when the matrix-valued function 
  $A(\lambda,\cdot)=A_{\rm{pc}}(\l,\cdot)+B(\cdot)$ is continuous,
  assumptions (a) -- (c) in Example \ref{front} hold, and there is $c>0$
  such that for all $\lambda\in \Omega$,
  \begin{equation}\label{eq:expA}
    \|A(\lambda,x)-A_-(\lambda)\|\le c e^{-\alpha |x|},\, x\le 0,\quad
    \|A(\lambda,x)-A_+(\lambda)\|\le c e^{-\alpha |x|},\, x\ge 0,
  \end{equation}
    where $\alpha$ is the exponent of
 exponential dichotomy on $\R_\pm$ for the constant coefficient  equations
 $y'=A_\pm(\l)y$.
Then the differential equation \eqref{dfens} with $A(\l,x)$ as in \eqref{tfe} has exponential
  dichotomy on $\mathbb{R}_\pm$ for all $\lambda\in\Omega$ and the
  roughness theorem \cite[Thm.A.3]{BL} implies \eqref{eq:ProjConv}.
Thus Hypothesis \ref{hyp3} is satisfied provided \eqref{dfens} has an exponential dichotomy on $\R$ for at least one $\l\in\Omega$.

%

  Under Hypothesis \ref{hyp3}, projection boundary conditions in
  \eqref{eq:inhomapprox} are a
  suitable choice, because they always satisfy \eqref{eq:detcond} by
  construction. For convenience, we recall the definition of the
  projection boundary conditions, see \cite{B90} for more details.
  Since the limits $P_\pm(\lambda)$ depend analytically on $\lambda$,
  there are analytic bases $V_\pm^s(\lambda)$, respectively, 
  $V_\pm^u(\lambda)$ of $\mathcal{R}(P_\pm(\l))$,
  respectively, $\mathcal{N}(P_\pm(\l))$ (e.g. \cite[Sec.II.1.4]{K80}). 
  Let us split the inverse matrix composed as follows:
  \[
    \big(V_\pm^s(\lambda) \big| V_\pm^u(\lambda)
  \big)^{-1}=\begin{pmatrix}
    L_{\pm}^s(\lambda)\\L_\pm^u(\lambda)
  \end{pmatrix}, \quad L_{\pm}^s(\l)\in\C^{k,d},\, \quad L_{\pm}^u(\l)\in\C^{d-k,d}.\]
  The projection boundary conditions are then given by the boundary
  matrices
  \begin{equation}\label{eq:projbc}
    R_-(\lambda):=\begin{pmatrix}
      L_-^s(\lambda)\\0_{(d-k)\times d}
    \end{pmatrix}
    \in \mathbb{C}^{d,d},\quad
    R_+(\lambda):=\begin{pmatrix}
      0_{k\times d}\\
      L_+^u(\lambda)
    \end{pmatrix}
    \in \mathbb{C}^{d,d}.
  \end{equation}
  By construction $ \big(R_-(\lambda)V_-^s(\lambda) \big| R_+(\lambda)
    V_+^u(\lambda)\big) = I_d$, i.e. \eqref{eq:detcond} is satisfied,
and 
  \begin{equation}\label{eq:BConRange}
    R_-(\lambda)(I-P_-(\lambda))=0, \quad
    R_+(\lambda)P_+(\lambda)=0.
  \end{equation}

We now use Theorem~\ref{thm:approxbybvp} to establish our main convergence
result  for the
matrix in \eqref{defE} and the integrals  in \eqref{defB01} when 
the underlying boundary value problems are solved on a finite interval.
We denote by $\mathcal{M}_b^c=\mathcal{M}_b^c(\mathbb{R},\mathbb{C}^d)$ 
the set of finite, compactly supported, $\mathbb{C}^d$ valued Radon
measures on $\mathbb{R}$. By Riesz's Theorem, e.g.
\cite[Thm.7.17]{F99}, and Lemma~\ref{lem:A1}, $\mathcal{M}_b^c\subset
\mathcal{H}'$ for $\cH$ from \eqref{domainF}. If
$\widehat{w}\in \mathcal{H}'$ is given by $\mu\in \mathcal{M}_b^c$, we
write $\langle \widehat{w},v\rangle=\int_\mathbb{R}v^\top(x) d\mu$ for
$v\in \mathcal{H}$.  
We approximate $\widehat{w}=\mu\in \mathcal{M}_b^c\subset\mathcal{H}'$
on a finite interval $J$ by its trace  $\widehat{w}|_J=
\mu|_J$ defined through $\langle \widehat{w}|_J,v\rangle=\int_Jv^\top(x)d\mu$ for all
$v\in \mathcal{H}_J$. Obviously, 
\begin{equation}\label{eq:cpsuppwhat}
  \langle\widehat{w}|_J,p_Jv\rangle-\langle\widehat{w},v\rangle=
  \int_{\mathbb{R}\setminus J}v^\top(x) d\mu=0, \quad
\text{if} \: J\supset \supp(\mu)\; \text{and}\; v\in
  \mathcal{H}.
\end{equation}
\begin{example} \label{ex:compactf}
  Two standard examples for $\widehat{w}\in \mathcal{H}'$, given as
  $\mu\in \mathcal{M}_b^c$:
  \begin{enumerate}
    \item If $\mu=e_i \delta_{x_0}$ for some $x_0\in \mathbb{R}$,
      $i\in\{1,\dots,d\}$, then 
      $\langle\widehat{w},v\rangle=\int v^\top(x) d\mu=v(x_0)^\top e_i=v_i(x_0)$
      is the $i$'th component of $v$ evaluated at $x_0$.
    \item If $\mu$ has density $f\in
      L^1_\text{loc}(\mathbb{R},\mathbb{C}^d)$ with respect to the
      Lebesgue measure, then
      $\langle\widehat{w},v\rangle=\int_\R v^\top(x) f(x)\,dx$. \hfill$\Diamond$
  \end{enumerate}
\end{example}
After these preliminaries we define and estimate approximations
$E^N(\lambda)$ of the function $E(\lambda)$ from \eqref{defE} 
by solving finite interval boundary value problems.
\begin{theorem}\label{thm:EConv}
  Let $F$ from \eqref{1stOrderOP}
  satisfy Hypothesis \ref{hyp2}, and let condition \eqref{eq:detcond}
  hold for boundary matrices $R_\pm(\l)$. Moreover, let
  $\Gamma\subset\rho(F)\cap\Omega$ be a contour and assume
  linearly independent elements $\widehat{w}_j\in \mathcal{H}'$,
  $j=1,\dots,m$, defined by $\mu_j\in \mathcal{M}_b^c$, and linearly
  independent functions $\widehat{v}_k\in \mathcal{K}$, $k=1,\dots,\ell$ which
are bounded and have compact support.
  Then the finite interval approximation $E^N(\lambda)$ of $E(\lambda)$,
  defined by (cf. \eqref{eq:inhomapprox})
  \[
  \begin{aligned}
    F_N(\lambda)y_k^N(\lambda)&= (\widehat{v}_{k}|_{J_N},0),\, k=1,\dots,\ell,\\
    E^N(\lambda)_{jk}&=\langle\widehat{w}_j|_{J_N},
    y_k^N(\lambda)\rangle=\int_{J_N}y_k^N(\lambda)^\top d\mu_j,\, j=1,\dots,m,
  \end{aligned}
  \]
  satisfies
  \begin{equation}\label{eq:En-Eest}
    \sup_{\lambda\in\Gamma}\big\|E^N(\lambda)-E(\lambda)\big\|
    \le c e^{-\alpha\min\{-x_-^N,x_+^N\}},
  \end{equation}
  with $\alpha$ being the dichotomy exponent in $\Omega$ from
  Hypothesis~\ref{hyp2} and $c$ a uniform constant.

  If $F$ additionally satisfies Hypothesis~\ref{hyp3} and $F_N$ is given
  with the projection boundary conditions defined via
  \eqref{eq:projbc}, then
  \eqref{eq:En-Eest} improves to
  \begin{equation}\label{eq:En-Eest2}
    \sup_{\lambda\in\Gamma}\big\|E^N(\lambda)-E(\lambda)\big\|
    \le c e^{-2\alpha\min\{-x_-^N,x_+^N\}}.
  \end{equation}
\end{theorem}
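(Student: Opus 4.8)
The plan is to deduce both bounds from the boundary value error estimate of Theorem~\ref{thm:approxbybvp}, applied with the compact set $\mathcal{C}=\Gamma$, by controlling the boundary residual $R_-(\lambda)y_k(\lambda,x_-^N)+R_+(\lambda)y_k(\lambda,x_+^N)$ through the explicit representation \eqref{solrep2} of the line solution together with the dichotomy estimates \eqref{dichotomy}. First I would pass from the $\mathcal{H}_N$-error to the error in $E$: choosing $N$ large enough that $J_N$ contains the supports of all measures $\mu_j$ and all functions $\widehat{v}_k$, identity \eqref{eq:cpsuppwhat} gives $\langle\widehat{w}_j|_{J_N},p_Ny_k(\lambda)\rangle=\langle\widehat{w}_j,y_k(\lambda)\rangle$, hence $E^N(\lambda)_{jk}-E(\lambda)_{jk}=\langle\widehat{w}_j|_{J_N},\,y_k^N(\lambda)-p_Ny_k(\lambda)\rangle$. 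By Riesz's theorem and the uniform embedding $\mathcal{H}_{J_N}\hookrightarrow C(J_N)$ from Lemma~\ref{lem:A2}, the dual norms $\|\widehat{w}_j|_{J_N}\|_{\mathcal{H}_{J_N}'}$ are bounded by $c\,|\mu_j|(\R)$ uniformly in $N$; so $\sup_{\lambda\in\Gamma}\|E^N(\lambda)-E(\lambda)\|$ is bounded by a constant times $\max_k\sup_{\lambda\in\Gamma}\|y_k^N(\lambda)-p_Ny_k(\lambda)\|_{\mathcal{H}_N}$, which by Theorem~\ref{thm:approxbybvp} is in turn controlled by $\max_k\sup_{\lambda\in\Gamma}|R_-(\lambda)y_k(\lambda,x_-^N)+R_+(\lambda)y_k(\lambda,x_+^N)|$.

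The next step is to estimate $y_k(\lambda,\cdot)$ at the endpoints. Since $\Gamma\subset\rho(F)$ one has $\cE(\lambda)\neq0$ on $\Gamma$, so \eqref{solrep2} is valid, and for $x_+^N$ beyond $\supp\widehat{v}_k$ it reads $y_k(\lambda,x_+^N)=S(x_+^N,0,\lambda)\eta_+(\lambda)+\int_0^{x_+^N}S(x_+^N,\xi,\lambda)P_+(\lambda,\xi)\widehat{v}_k(\xi)\,d\xi$ with $\eta_+(\lambda)=\cE(\lambda)^{-1}\cY_U(\lambda)[\widehat{v}_k]_0\in\mathcal{R}(P_+(\lambda,0))$, bounded on $\Gamma$. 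Using the invariance $S(x,\xi,\lambda)P_+(\lambda,\xi)=P_+(\lambda,x)S(x,\xi,\lambda)P_+(\lambda,\xi)$ and the dichotomy bound \eqref{dichotomy} with the uniform exponent $\alpha$ of Hypothesis~\ref{hyp2}, each term is $O(e^{-\alpha x_+^N})$ uniformly in $\lambda\in\Gamma$ and, crucially, $y_k(\lambda,x_+^N)\in\mathcal{R}(P_+(\lambda,x_+^N))$. Symmetrically, $y_k(\lambda,x_-^N)\in\mathcal{N}(P_-(\lambda,x_-^N))$ and $|y_k(\lambda,x_-^N)|\le c\,e^{\alpha x_-^N}$. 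Combining these with the boundedness of $\|R_\pm(\lambda)\|$ on $\Gamma$ already gives $|R_-(\lambda)y_k(\lambda,x_-^N)+R_+(\lambda)y_k(\lambda,x_+^N)|\le c\,e^{-\alpha\min\{-x_-^N,x_+^N\}}$, and hence \eqref{eq:En-Eest}.

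For the improved rate \eqref{eq:En-Eest2} I would exploit that the projection boundary matrices \eqref{eq:projbc} satisfy $R_+(\lambda)P_+(\lambda)=0$ and $R_-(\lambda)(I-P_-(\lambda))=0$ by \eqref{eq:BConRange}. Writing $P_+(\lambda,x_+^N)=P_+(\lambda)+(P_+(\lambda,x_+^N)-P_+(\lambda))$ and using $y_k(\lambda,x_+^N)=P_+(\lambda,x_+^N)y_k(\lambda,x_+^N)$ from the previous step, one gets $R_+(\lambda)y_k(\lambda,x_+^N)=R_+(\lambda)(I-P_+(\lambda))(P_+(\lambda,x_+^N)-P_+(\lambda))y_k(\lambda,x_+^N)$, which by the exponential convergence \eqref{eq:ProjConv} of Hypothesis~\ref{hyp3} together with $|y_k(\lambda,x_+^N)|\le c\,e^{-\alpha x_+^N}$ is $O(e^{-2\alpha x_+^N})$; the analogous computation with $P_-(\lambda,x_-^N)y_k(\lambda,x_-^N)=0$ gives $R_-(\lambda)y_k(\lambda,x_-^N)=O(e^{2\alpha x_-^N})$. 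Substituting back through the chain of the first paragraph yields \eqref{eq:En-Eest2}.

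I expect the decisive point to be the observation in the second paragraph that the line solution lies in the stable subspace $\mathcal{R}(P_+(\lambda,x))$ at the right endpoint (and in $\mathcal{N}(P_-(\lambda,x))$ at the left), because this is exactly what makes the projection boundary condition — which annihilates the \emph{limiting} stable/unstable subspaces — see only the $O(e^{-\alpha|x|})$ deviation of $P_\pm(\lambda,x)$ from $P_\pm(\lambda)$, supplying the second exponential factor responsible for the quadratic rate. The remaining work is bookkeeping: keeping every constant uniform in $\lambda\in\Gamma$ (uniform dichotomy exponent from Hypothesis~\ref{hyp2}) and in $N$ (uniform embedding constant from Lemma~\ref{lem:A2}, so the dual norms of the $\mu_j$-functionals do not grow as $J_N$ expands).
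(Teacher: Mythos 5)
Your proposal is correct and follows essentially the same route as the paper: reduce the error in $E^N-E$ via \eqref{eq:cpsuppwhat} and Lemma~\ref{lem:A2} to the $\mathcal{H}_N$-error of Theorem~\ref{thm:approxbybvp}, bound the boundary residual $R_-(\l)y_k(\l,x_-^N)+R_+(\l)y_k(\l,x_+^N)$ by $e^{-\alpha\min\{-x_-^N,x_+^N\}}$ using the exponential dichotomy and the compact support of $\widehat{v}_k$, and gain the second factor for projection boundary conditions from \eqref{eq:BConRange} together with \eqref{eq:ProjConv}. The only (harmless) difference is that you represent $y_k$ at the endpoints via \eqref{solrep2} and the membership $y_k(\l,x_+^N)\in\mathcal{R}(P_+(\l,x_+^N))$, whereas the paper uses the whole-line Green's function and extracts the same gain from the factor $\|R_-(\l)(P(\l,x_-^N)-I)\|$.
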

As a corollary we  obtain estimates for the approximate
matrices (cf. \eqref{defB01})
\beq\lb{defBn01}D_j^N=\frac{1}{2\pi i}\int_\G \l^j E^N(\l)\,d\l,\quad
j=0,1.\enq
\begin{corollary}\label{cor:4.8}
  Under the assumptions of Theorem~\ref{thm:EConv} the following estimates hold
  \begin{equation}\label{eq:422}
    \|D_0-D_0^N\|\le c e^{-\alpha \min\{-x_-^N,x_+^N\}},\quad
    \|D_1-D_1^N\|\le c e^{-\alpha \min\{-x_-^N,x_+^N\}}.
  \end{equation}
  In the case of projection boundary conditions the constant $\alpha$
  improves to $2\alpha$.
\end{corollary}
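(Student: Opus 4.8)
The plan is to reduce everything directly to Theorem~\ref{thm:EConv} by exploiting linearity of the contour integral. By Theorem~\ref{thm:approxbybvp} there is an $N_0$ such that for $N\ge N_0$ the finite interval boundary value problems defining $E^N(\lambda)$ are uniquely solvable for all $\lambda\in\Gamma$, so that $D_0^N,D_1^N$ from \eqref{defBn01} are well defined. Subtracting \eqref{defBn01} from \eqref{defB01} and using linearity of the integral gives, for $j=0,1$,
\[
D_j - D_j^N = \frac{1}{2\pi i}\int_\Gamma \lambda^j\bigl(E(\lambda)-E^N(\lambda)\bigr)\,d\lambda .
\]

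First I would apply the elementary bound $\Norm{\int_\Gamma g(\lambda)\,d\lambda}\le \mathrm{length}(\Gamma)\,\sup_{\lambda\in\Gamma}\norm{g(\lambda)}$ with $g(\lambda)=\lambda^j\bigl(E(\lambda)-E^N(\lambda)\bigr)$, obtaining
\[
\norm{D_j-D_j^N}\le \frac{\mathrm{length}(\Gamma)}{2\pi}\Bigl(\sup_{\lambda\in\Gamma}\abs{\lambda}^j\Bigr)\sup_{\lambda\in\Gamma}\norm{E(\lambda)-E^N(\lambda)} .
\]
Since $\Gamma$ is a smooth (hence rectifiable) contour with compact image, both $\mathrm{length}(\Gamma)$ and $\sup_{\lambda\in\Gamma}\abs{\lambda}^j$ are finite, and they get absorbed into a uniform constant.

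Finally I would insert the estimate \eqref{eq:En-Eest} of Theorem~\ref{thm:EConv}, which is already uniform in $\lambda\in\Gamma$, to conclude the first pair of inequalities in \eqref{eq:422}; replacing \eqref{eq:En-Eest} by \eqref{eq:En-Eest2} in the case of projection boundary conditions produces the exponent $2\alpha$ and hence the improved rate. I do not anticipate any genuine obstacle here: the argument is merely ``integrate the pointwise estimate over a compact contour'', and the only points to verify---finiteness of the contour length and of $\sup_{\lambda\in\Gamma}\abs{\lambda}$, together with the uniformity in $\lambda$ already built into Theorem~\ref{thm:EConv}---are immediate.
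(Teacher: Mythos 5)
Your proof is correct and is exactly the argument the paper intends: the corollary carries no written proof because it is the immediate consequence of linearity of the contour integral, a standard $\mathrm{length}\times\sup$ bound, and Theorem~\ref{thm:EConv}. Nothing more is needed.
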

\begin{proof}[Proof of Theorem~\ref{thm:EConv}]
  Throughout the proof, $c$ is a generic constant.
  For all $\lambda\in \Gamma\subset\rho(F)$ the differential equation \eqref{dfens} has an
  exponential dichotomy on $\mathbb{R}$ with a uniform exponent $\alpha$
  and projections $P(\l,x)=P_\pm(\l,x)$ that depend
  holomorphically on $\lambda$ and satisfy
  $\lim_{x\to\pm\infty}P(\lambda,x)=P_\pm(\lambda)$, see
  \cite[Thm.~A.5]{BL}. With these
  projections the Green's function reads
  \[G(x,\xi,\lambda)=
  \begin{cases}
    S(x,\xi,\lambda)P(\lambda,\xi),&x\ge\xi,\\
    S(x,\xi,\lambda)(P(\lambda,\xi)-I),&x<\xi,
  \end{cases}\]
  and the solution $y_k(\lambda)$ of
  $F(\lambda)y_k(\lambda)=\widehat{v}_k$ is given by
  (see \cite[Thm.A.1]{BL})
  \[
  y_k(\lambda,x)=\int_\mathbb{R}
  G(x,\xi,\lambda)\widehat{v}_k(\xi)\, d\xi.\]
  There is $N_0\in \mathbb{N}$ with $J_N\supset \supp(\mu_j)$ for all
  $N\ge N_0$ and $j=1,\dots,m$. Using \eqref{defpq} and \eqref{eq:cpsuppwhat} 
we find for $N\ge N_0$
  \begin{align}
    &\bigl|\langle\widehat{w}_j,y_k(\lambda)\rangle-
    \langle\widehat{w}_j|_{J_N},y_k^N(\lambda)\rangle\bigr|\notag\\
    &\qquad 
    \le \Big|\langle \widehat{w}_j,y_k(\lambda)\rangle-
    \langle\widehat{w}_j|_{J_N},p_Ny_k(\lambda)\rangle\Big|+
    \Big|\langle\widehat{w}_j|_{J_N},
    p_Ny_k(\lambda)-y_k^N(\lambda)\rangle\Big|\notag\\
    &\qquad \le c\Bigl\| p_Ny_k(\lambda)-y_k^N(\lambda)
    \Bigr\|_{\mathcal{H}_N}.
    \label{eq:MainEstPt1}
  \end{align}
  Since $\widehat{v}_k$ has compact support and is bounded there is a constant $c$ such that 
  $|\widehat{v}_k(\xi)|\le c e^{-2\alpha|\xi|}$ for all
  $\xi\in\mathbb{R}$ and $k=1,\dots,\ell$. This is used to bound the right
  hand side of \eqref{eq:bvperror}:
  \begin{equation} \label{eq:estintern}
    \begin{aligned}
      |R_-(\lambda)y_k(\lambda,x_-^N)|
      &\le c \|R_-(\lambda)\| 
      \int_{-\infty}^{x_-^N}e^{-\alpha|x_-^N-\xi|}|\widehat{v}_k(\xi)|\,d\xi\\
      &\qquad+ c \|R_-(\lambda)(P(\lambda,x_-^N)-I)\|
      \int_{x_-^N}^{\infty}e^{-\alpha(\xi-x_-^N)}|\widehat{v}_k(\xi)|\,d\xi\\
      &\le c
      \int_{-\infty}^{x_-^N}e^{-\alpha|x_-^N-\xi|}e^{-2\alpha|\xi|}\,d\xi\\
      &\qquad+ c\|R_-(\lambda)(P(\lambda,x_-^N)-I)\|
      e^{\alpha x_-^N}\int_{x_-^N}^{\infty}e^{-\alpha\xi}e^{-2\alpha|\xi|}\,d\xi\\
      &\le c e^{2\alpha x_-^N} + c\|R_-(\lambda)(P(\lambda,x_-^N)-I)\|
      e^{\alpha x_-^N}.
    \end{aligned}
  \end{equation}
  A similar estimate holds for $|R_+(\lambda)y_k(\lambda,x_+^N)|$.
  Since the estimates are uniform in $\lambda\in\Gamma$, we obtain
  \begin{equation}
    \sup_{\lambda\in
    \Gamma}|R_-(\lambda)y_k(\lambda,x_-^N)+R_+(\lambda)y_k(\lambda,x_+^N)|\le
    ce^{-\alpha\min\{-x_-^N,x_+^N\}},
  \end{equation}
  by Theorem~\ref{thm:approxbybvp} this proves \eqref{eq:En-Eest}.

  If Hypothesis \ref{hyp3} holds, the projections $P(\lambda,x)$ of the
  exponential dichotomy on the whole real line can be chosen to satisfy
  \eqref{eq:ProjConv}, again see \cite[Thm.~A.5]{BL}. 
  For projection boundary conditions we then find from
  \eqref{eq:ProjConv} and \eqref{eq:BConRange}
  \[
  \| R_-(\lambda)(P(\lambda,x_-^N)-I)\|
  =\|R_-(\lambda)(P(\lambda,x_-^N)-P_-(\lambda))\| \le
  c e^{\alpha x^N_-}.
  \]
  Summarizing,  we can bound the right hand side of \eqref{eq:bvperror}
  as follows
  \[ \sup_{\lambda\in
  \Gamma}|R_-(\lambda)y_k(\lambda,x_-^N)+R_+(\lambda)y_k(\lambda,x_+^N)|
  \le c e^{-2\alpha \min\{-x_-^N,x_+^N\}},
  \]
  which gives the desired improved order of convergence.
\hfill\end{proof}

\begin{remark}
  It is not difficult to weaken the assumption of compact support for
  $\widehat{v}_k$ and $\widehat{w}_j$. For example, the proof in 
\eqref{eq:estintern} shows that it is sufficient to have
$|\widehat{v}_k(\xi)|\le c e^{-2\alpha|\xi|}$ for 
  $\xi\in\mathbb{R}$ and $k=1,\dots,\ell$. \hfill$\Diamond$
\end{remark}
\subsection{Estimates of eigenvalues}
We analyze the error of the numerical algorithm from
Section~\ref{s:simpleev} when the matrices $D_0$,
$D_1$ are replaced by their approximations $D_0^N$, $D_1^N$ satisfying
the estimates \eqref{eq:422}.

As before, let $D_0$ be of rank $\varkappa$ and let $D_0=V_0\Sigma_0W_0^*$,
be the short form of its singular value decomposition (SVD), cf.
 \eqref{singval}.
In the following we consider a small perturbation $\wti{D}_0\in \C^{m,\ell}$ of
$D_0$ with the full SVD
\begin{equation}\label{eq:SVD_Tilde}
  \wti{D}_0=\vect{\wti{V}_0&\wti{V}_1}
  \vect{\wti{\Sigma}_0&0_{\k,\ell-\k}\\0_{m-\k,\k}&\wti{\Sigma}_1}\vect{\wti{W}_0^*\\\wti{W}_1^*},
\end{equation}
where
$\wti{V}_0 \in \C^{m,\varkappa}$,$\wti{V}_1\in \C^{m,m-\varkappa}$, 
$\wti{W}_0 \in \C^{\ell,\varkappa}$, $\wti{W}_1\in \C^{\ell,\ell - \varkappa}$,
 and $\wti{\Sigma}_0\in \C^{\varkappa,\varkappa}$
contains the $\varkappa$ largest singular values of $\wti{D}_0$. 
Instead of computing the eigenvalues of
\beq\label{new2.18}
D=V_0^*D_1W_0\Sigma_0^{-1}\enq in \eqref{eqfL}, we use \eqref{eq:SVD_Tilde}
and compute the eigenvalues of
\begin{equation}\label{eq:D_Tilde}
  \wti{D}=\wti{V}_0^* \wti{D}_1\wti{W}_0 \wti{\Sigma}_0^{-1},
\end{equation}
where $\wti{D}_1$ is a small perturbation of $D_1$.
The following lemma shows that the eigenvalues of $\wti{D}$
approximate those of $D$ with the order of the original perturbations.
In order to apply the perturbation theory from \cite{St} we use the Frobenius norm 
$\|D\|_{F}^2 =\mathrm{tr}(D^{*}D)$,
the spectral norm $\|D\|_2$ 
and the Hausdorff distance 
\[ \dist_H(M_1,M_2)=\max(\sup_{z\in M_1} \inf_{y \in M_2}|z-y|,
\sup_{z\in M_2} \inf_{y \in M_1}|z-y| ), \quad M_1,M_2 \subset \C.
\]

\begin{lemma}\label{thm:error_evals} 
  Let $D_0, D_1 \in \mathbb{C}^{m,\ell}$ be given such that $\rank D_0=\k$
and such that the matrix $D$ from \eqref{new2.18} has only simple eigenvalues.
  Then there exist
  $\varepsilon_0>0$ and $C>0$ such that the spectra 
 of $D$  and $\wti{D}$ from \eqref{eq:D_Tilde}
satisfy
  \begin{equation}\label{eq:evalErr}
    \dist_H\left( \sigma(D),\sigma(\wti{D}) \right)\le C 
(\|D_0-\wti{D}_0\|_F+\|D_1-\wti{D}_1\|_F),
  \end{equation}
 provided  $ \|D_0-\wti{D}_0\|_F+\|D_1-\wti{D}_1\|_F \le\varepsilon_0 $.
\end{lemma}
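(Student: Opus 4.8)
The plan is to reduce the comparison of the spectra $\sigma(D)$ and $\sigma(\wti D)$ to a comparison of two scalar polynomials of degree $\varkappa$, and then to invoke the local Lipschitz dependence of the roots of a polynomial at a point where all roots are simple. Put $\varepsilon:=\|D_0-\wti D_0\|_F+\|D_1-\wti D_1\|_F$ and write $M:=V_0^*D_1W_0$, $\wti M:=\wti V_0^*\wti D_1\wti W_0$. Since $zI-D=(z\Sigma_0-V_0^*D_1W_0)\Sigma_0^{-1}$, one has $\det(zI-D)=(-1)^\varkappa(\det\Sigma_0)^{-1}p(z)$ with $p(z):=\det(M-z\Sigma_0)$, so $p$ is a polynomial of degree $\varkappa$ whose roots are exactly the (by hypothesis simple) eigenvalues of $D$, with leading coefficient $(-1)^\varkappa\det\Sigma_0$ bounded away from $0$. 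Likewise $\sigma(\wti D)$ is the root set of $\tilde p(z):=\det(\wti M-z\wti\Sigma_0)$. Hence it suffices to show that, for $\varepsilon$ small, $\tilde p$ coincides with $p$ up to a unimodular factor and an error of size $O(\varepsilon)$, uniformly on a fixed disk that a priori contains all eigenvalues involved.

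First I would record the standard singular value decomposition perturbation facts (as in \cite{St}). Because $\rank D_0=\varkappa$, the singular values satisfy $\sigma_1\ge\cdots\ge\sigma_\varkappa>0=\sigma_{\varkappa+1}=\cdots$, so there is a spectral gap at index $\varkappa$. Weyl's inequality gives $|\wti\sigma_j-\sigma_j|\le\|D_0-\wti D_0\|_2\le\varepsilon$, so for $\varepsilon\le\varepsilon_0$ small enough $\wti\Sigma_0$ is invertible with $\|\wti\Sigma_0^{-1}\|\le 2\sigma_\varkappa^{-1}$ (hence $\wti D$ is well defined), and a gap of size $\ge\sigma_\varkappa/2$ separates the singular values collected in $\wti\Sigma_0$ from those in $\wti\Sigma_1$. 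Wedin's $\sin\Theta$ theorem then yields unitary matrices $R_V,R_W\in\C^{\varkappa,\varkappa}$ with $\|\wti V_0-V_0R_V\|_F\le C\varepsilon$ and $\|\wti W_0-W_0R_W\|_F\le C\varepsilon$, where $C$ depends only on $\sigma_\varkappa$. Finally $\|D\|_2$ and, for $\varepsilon\le\varepsilon_0$, $\|\wti D\|_2$ are bounded by a fixed $R_0$ (use $\|\wti V_0\|_2=\|\wti W_0\|_2=1$, the bound on $\wti\Sigma_0^{-1}$, and $\|\wti D_1\|\le\|D_1\|+\varepsilon_0$), so all eigenvalues lie in the disk $\{|z|\le\rho\}$ with $\rho:=R_0+1$.

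Next I would insert these bounds. Writing $\wti V_0=V_0R_V+\Delta_V$, $\wti W_0=W_0R_W+\Delta_W$, $\wti D_1=D_1+\Delta_1$ with $\|\Delta_V\|_F,\|\Delta_W\|_F,\|\Delta_1\|_F\le C\varepsilon$, and using $V_0^*V_0=W_0^*W_0=I_\varkappa$, $V_0^*D_0W_0=\Sigma_0$ and $\wti V_0^*\wti V_1=0$ (orthonormal columns in the full SVD \eqref{eq:SVD_Tilde}, whence $\wti\Sigma_0=\wti V_0^*\wti D_0\wti W_0$ exactly), a direct expansion gives
\[
\wti M=R_V^*MR_W+O(\varepsilon),\qquad \wti\Sigma_0=R_V^*\Sigma_0R_W+O(\varepsilon),
\]
where $\|O(\varepsilon)\|_F\le C\varepsilon$ (the bounded factors $\|D_0\|,\|D_1\|,\|R_V\|=\|R_W\|=1$ are absorbed into $C$). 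Hence $\wti M-z\wti\Sigma_0=R_V^*\bigl[(M-z\Sigma_0)+R_VE(z)R_W^*\bigr]R_W$ with $\|E(z)\|_F\le C(1+|z|)\varepsilon$, and therefore
\[
\tilde p(z)=\det(R_V^*)\,\det(R_W)\,\det\!\bigl((M-z\Sigma_0)+R_VE(z)R_W^*\bigr),\qquad|\det(R_V^*)\det(R_W)|=1 .
\]
On $|z|\le\rho$ the entries of $M-z\Sigma_0$ are bounded, so the determinant is Lipschitz there and $\|R_VE(z)R_W^*\|_F=\|E(z)\|_F\le C\varepsilon$; consequently $|\tilde p(z)-c\,p(z)|\le C\varepsilon$ on $|z|\le\rho$ for some $c$ with $|c|=1$, and replacing $\tilde p$ by $\bar c\,\tilde p$ (which leaves its roots unchanged) we may take $c=1$.

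To finish, I would apply the elementary fact that the roots of a degree-$\varkappa$ polynomial depend locally Lipschitz-continuously on its coefficients at a polynomial whose roots are all simple (implicit function theorem for each simple root, together with a Rouch\'e argument for uniformity and to rule out escaping roots). Cauchy's estimates on $|z|=\rho$ turn the uniform bound $|\tilde p-p|\le C\varepsilon$ into an $O(\varepsilon)$ bound on the coefficients of $\tilde p-p$; since the roots of $p$ are the simple eigenvalues of $D$ and all roots of $\tilde p$ lie in $|z|<\rho$, this provides a bijective labelling of $\sigma(D)$ and $\sigma(\wti D)$ by pairs at distance $O(\varepsilon)$, hence $\dist_H(\sigma(D),\sigma(\wti D))\le C\varepsilon$ once $\varepsilon\le\varepsilon_0$, as claimed. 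I expect the main obstacle to be the bookkeeping around the non-uniqueness of the singular value decomposition — in particular, recognizing that the alignment unitaries $R_V,R_W$ cancel inside the determinant so that $p$ (hence $\sigma(D)$) is intrinsically defined, and checking that the spectral gap furnished by $\rank D_0=\varkappa$ is exactly what makes Wedin's theorem applicable; the remaining estimates are routine.
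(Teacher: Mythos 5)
Your proof is correct, but it takes a genuinely different route from the paper's. The paper works at the matrix level: it invokes Stewart's block-diagonalization theorem to build explicit near-identity unitaries $\widetilde{Y},\widetilde{X}$ that reduce $\widetilde{D}_0$ to block form, identifies $\mathcal{R}(\widetilde{Y}_0)=\mathcal{R}(\widetilde{V}_0)$ and $\mathcal{R}(\widetilde{X}_0)=\mathcal{R}(\widetilde{W}_0)$ via the singular value gap, and thereby exhibits a matrix $\widehat{D}=\widetilde{Y}_0^*\widetilde{D}_1\widetilde{X}_0A_{11}^{-1}$ that is \emph{similar} to $\widetilde{D}$ and satisfies $\|\widehat{D}-D\|_F\le C\varepsilon$; the spectral estimate then follows from analytic dependence of simple eigenvalues. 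You instead work at the level of characteristic polynomials: Weyl plus Wedin's $\sin\Theta$ theorem give the alignment unitaries $R_V,R_W$, which cancel (up to a unimodular constant) inside $\det(\widetilde{M}-z\widetilde{\Sigma}_0)$, reducing everything to a uniform $O(\varepsilon)$ comparison of two degree-$\varkappa$ polynomials with simple roots. Both arguments hinge on the same two ingredients — the gap $\sigma_{\varkappa}>0$ supplied by $\rank D_0=\varkappa$, and a mechanism for neutralizing the non-uniqueness of the SVD of $\widetilde{D}_0$ — but they resolve the second ingredient differently. The paper's construction buys more than the lemma states: the Frobenius-norm closeness of $D$ and a matrix similar to $\widetilde{D}$ is exactly what Remark 4.11 exploits to get the rate $\varepsilon^{1/\mu}$ for defective eigenvalues and to control invariant subspaces. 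Your polynomial route also extends to multiple roots (with H\"older rate) via continuity of roots, but it yields only spectral information; recovering the eigenvector/subspace statements used elsewhere in the paper would require reinstating something like the paper's similarity argument.
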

\begin{proof}
  In the following $C$ denotes a generic constant depending
 on $\varepsilon_0$ but not on 
   $\wti{D}_0$, $\wti{D}_1$ and $\varepsilon:=\|D_0-\wti{D}_0\|_F+\|D_1-\wti{D}_1\|_F \le \varepsilon_0$.

  Let us extend  $Y_0=V_0$ and $X_0=W_0$ to unitary matrices  
$Y=\vect{Y_0&Y_1}\in \mathbb{C}^{m,m}$ and $X=\vect{X_0&X_1} \in
  \mathbb{C}^{\ell,\ell}$ and introduce $E\in
  \mathbb{C}^{m,\ell}$ such that
  \[Y^*D_0X=\vect{\Sigma_0&0_{\k,\ell-\k}\\0_{m-\k,\k}&0_{m-\k,\ell-\k}},\quad
  E=\vect{E_{11}&E_{12}\\E_{21}&E_{22}}:=Y^*(\wti{D}_0-D_0)X\]
  with conformal partitioning. We require 
$4\varepsilon_0< \sigma_{\min}=\min_{j=1,\ldots,\varkappa}\sigma_j$ and obtain 
  \[
  \begin{aligned}
    \gamma&:=\big({\|E_{12}\|_F^2+\|E_{21}\|_F^2}\big)^{1/2}\le \|E\|_F\le
    \varepsilon\le \varepsilon_0, \\
    2 \varepsilon &\le 2 \varepsilon_0 < \frac{\sigma_{\min}}{2} \le
\sigma_{\min} -\sqrt{2}\varepsilon_0
\le\sigma_{\min} -\|E_{11}\|_{2}-\|E_{22}\|_2=:\delta.
  \end{aligned}
  \]
   Therefore \cite[Thm.~6.4]{St} applies and yields
  $Q\in \mathbb{C}^{m-\k,\k},P\in \mathbb{C}^{l-\k,\k}$ with 
  \begin{equation}\label{eq:PQEst}
    \big({\|Q\|_F^2+\|P\|_F^2}\big)^{1/2}\le 2\frac{\gamma}{\delta}\le
    \frac{4}{\sigma_{\min}} \varepsilon,
  \end{equation}
    so that the unitary matrices
  \begin{equation} \label{eq:XYperturb}
  \begin{aligned}
  \wti{Y}&=\vect{\wti{Y}_0&\wti{Y}_1}
  = Y\vect{I&-Q^*\\Q&I} \vect{(I+Q^*Q)^{-{1}/{2}}&0_{\k,m-\k}\\
  0_{m-\k,\k}&(I+QQ^*)^{-{1}/{2}}}, \\
  \wti{X}&=\vect{\wti{X}_0&\wti{X}_1}
  = X\vect{I&-P^*\\P&I} \vect{(I+P^*P)^{-{1}/{2}}&0_{\k,\ell-\k}\\
  0_{\ell-\k,\k}&(I+PP^*)^{-{1}/{2}}}
  \end{aligned}
  \end{equation}
  transform $\wti{D}_0$ into block diagonal form
  \[ \wti{Y}^*\wti{D}_0\wti{X}=\vect{A_{11}&0_{\k,\ell-\k}\\0_{m-\k,\k}&A_{22}}\in\bbC^{m,\ell}.\]
  The matrices $A_{11}$ and $A_{22}$ can be written as
  \begin{equation}\label{eq:A11A22}
    \begin{aligned}
      A_{11}=\left( I+Q^*Q \right)^{{1}/{2}}\left(
      \Sigma_0+E_{11}+E_{12}P \right)\left( I+P^*P
      \right)^{-{1}/{2}},\\
      A_{22}=\left( I+QQ^* \right)^{{1}/{2}}\left(
      E_{22}-E_{21}P^* \right)\left( I+PP^*
      \right)^{-{1}/{2}}.
    \end{aligned}
  \end{equation}
  From the estimate \eqref{eq:PQEst} we find
  \[\left\| (I+Q^*Q)^{\pm{1}/{2}}-I\right\|_{2}\le C \|Q\|_{2}^2\le C
  \varepsilon^2\]
  and a similar estimate holds for $(I+QQ^*)^{\pm{1}/{2}}$. Using
\eqref{eq:XYperturb}, \eqref{eq:A11A22}  this leads to 
  \begin{equation}\label{eq:MatErrors}
      \|\wti{Y}_0-V_0\|_F\le C\varepsilon,\;
      \|\wti{X}_0-W_0\|_F\le C\varepsilon,\;
      \|\Sigma_0-A_{11}\|_2\le C\varepsilon,\;
      \|A_{22}\|_2\le C\varepsilon.
  \end{equation}
  By \cite[II~Cor.~2.3]{GK} and the last two inequalities in \eqref{eq:MatErrors} the sets of singular values
  $\sing A_{11}$ of $A_{11}$ and $\sing A_{22}$ of $A_{22}$ satisfy
  \[
  \min(\sing A_{11})\ge \sigma_{\min}-C\varepsilon,\quad
  \max(\sing A_{22})\le C\varepsilon.\]
  Decreasing $\varepsilon_0$ further we find that $\sing A_{11}$ and 
$\sing A_{22}$ are disjoint and hence $\sing A_{11}$ contains
 the $\k$ largest singular values of $\wti{D}_0$.

  Therefore, $\mathcal{R}(\wti{X}_0)$ is the invariant subspace of
  $\wti{D}_0^*\wti{D}_0$ corresponding to the $\k$ largest eigenvalues
  of $\wti{D}_0^*\wti{D}_0$ and coincides with
  $\mathcal{R}(\wti{W}_0)$. Similarly,
  $\mathcal{R}(\wti{Y}_0)=\mathcal{R}(\wti{V}_0)$.
  Then the matrices $T_0=\wti{X}_0^*\wti{W}_0\in\mathbb{C}^{\k,\k}$ and
  $S_0=\wti{Y}_0^*\wti{V}_0\in\mathbb{C}^{\k,\k}$ are
  unitary  and satisfy 
  $\wti{X}_0T_0=\wti{W}_0$ and $\wti{Y}_0S_0=\wti{V}_0$.
  Moreover, $\wti{\Sigma}_0=S_0^*\wti{Y}_0^*\wti{D}_0\wti{X}_0 T_0=
  S_0^*A_{11}T_0$, so that the matrices
\[\wti{D}=\wti{V}^*_0\wti{D}_1\wti{W}_0\wti{\Sigma}_0^{-1}=
  S_0^*\wti{Y}_0^*\wti{D}_1\wti{X}_0 T_0T_0^*A_{11}^{-1}S_0
=S_0^*\wti{Y}_0^*\wti{D}_1\wti{X}_0 A_{11}^{-1}S_0\]
and
  \[\widehat{D}=\wti{Y}_0^*\wti{D}_1\wti{X}_0 A_{11}^{-1}\]
  are similar and have the same spectrum.
    Now estimates  \eqref{eq:MatErrors} imply
  \[\|\widehat{D}-D\|_F\le C\varepsilon.\]
  Since simple eigenvalues depend analytically on the matrix, we
  finally obtain for some $C>0$
  \begin{equation} \label{eq:comparespectra}
\dist_H\left( \sigma(D),\sigma(\wti{D}) \right)=
  \dist_H\left( \sigma(D),\sigma(\widehat{D}) \right)
  \le C \varepsilon.
  \end{equation}
  This finishes the proof. \hfill
\end{proof}
Combining this result with Corollary \ref{cor:4.8}  shows that using
boundary value problems for the computation of the point spectrum is a
robust method and leads to exponential convergence with respect to
the length of intervals.
\begin{theorem}\label{thm:EvalsConv}
  Let the assumptions of Theorem~\ref{thm:EConv} hold and let all eigenvalues of $F$
inside the contour $\Gamma$ be simple. Then there is $C>0$
such that the following holds for all intervals $[x_-^N,x_+^N]$ with
  $\min\{-x_-^N,x_+^N\}$ sufficiently large.
 The set $\sigma^N$ of the eigenvalues of the approximate pencil $F_N$ from \eqref{eq:inhomapprox}, computed by the method
  from Section~\ref{s:simpleev} using the approximations $D_0^N$, $D_1^N$
from \eqref{defBn01} instead of
  $D_0, D_1$, satisfies  the estimate
  \begin{equation}\label{eq:MainEvalError}
    \dist_H\left( \sigma^N,\sigma(D) \right)\le C
    e^{-\alpha\min\{-x_-^N,x_+^N\}}.
  \end{equation}
  In case of projection boundary conditions the constant $\alpha$
  improves to $2\alpha$.
\end{theorem}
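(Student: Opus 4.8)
The plan is to obtain \eqref{eq:MainEvalError} by combining the a~priori bounds of Corollary~\ref{cor:4.8} with the matrix perturbation estimate of Lemma~\ref{thm:error_evals}; since both of these are already available, what remains is essentially bookkeeping. First I would fix the structure of the exact matrix. Because the assumptions of Theorem~\ref{thm:EConv} hold and all eigenvalues $\lambda_1,\dots,\lambda_\varkappa$ of $F$ inside $\Gamma$ are simple, Theorem~\ref{simpleeigen} applies: with $D_0,D_1$ as in \eqref{defB01} and $D=V_0^\ast D_1W_0\Sigma_0^{-1}$ as in \eqref{eqfL}, the matrix $D$ has precisely $\lambda_1,\dots,\lambda_\varkappa$ as its eigenvalues and they are simple, so that $\rank D_0=\varkappa$ and $D$ fulfils the hypotheses of Lemma~\ref{thm:error_evals}. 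Moreover, by construction \eqref{defBn01} the matrices $D_0^N,D_1^N$ are precisely the perturbed data $\wti D_0,\wti D_1$ appearing in \eqref{eq:SVD_Tilde}--\eqref{eq:D_Tilde}, and the matrix $\wti D$ assembled from a singular value decomposition of $D_0^N$ as in Section~\ref{s:simpleev} is by definition the one whose spectrum is the computed set $\sigma^N$; one may further remark that, for $N$ large, Theorem~\ref{discreteconverge}(ii) together with Lemma~\ref{lem:D2} and Proposition~\ref{prop:D3} guarantees that $F_N$ has exactly $\varkappa$ eigenvalues inside $\Gamma$, all simple, so that Theorem~\ref{simpleeigen} applied to $F_N$ identifies $\sigma^N$ with $\sigma(F_N)\cap\Omega_0$.

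Next I would control the size of the perturbation. By Corollary~\ref{cor:4.8},
\[
\|D_0-D_0^N\|+\|D_1-D_1^N\|\le c\,e^{-\alpha\min\{-x_-^N,x_+^N\}},
\]
with $\alpha$ replaced by $2\alpha$ in the case of projection boundary conditions. Since $D_0,D_1$ lie in the fixed finite-dimensional space $\C^{m,\ell}$, the norm used there is equivalent to the Frobenius norm required by Lemma~\ref{thm:error_evals} up to a constant depending only on $m$ and $\ell$; hence
\[
\varepsilon_N:=\|D_0-D_0^N\|_F+\|D_1-D_1^N\|_F\le c\,e^{-\alpha\min\{-x_-^N,x_+^N\}}.
\]
Choosing $\min\{-x_-^N,x_+^N\}$ large enough that $\varepsilon_N\le\varepsilon_0$, with $\varepsilon_0$ the threshold from Lemma~\ref{thm:error_evals}, that lemma yields
\[
\dist_H\big(\sigma(D),\sigma^N\big)=\dist_H\big(\sigma(D),\sigma(\wti D)\big)\le C\,\varepsilon_N\le C\,e^{-\alpha\min\{-x_-^N,x_+^N\}},
\]
which is exactly \eqref{eq:MainEvalError}, and the projection-boundary-condition case follows verbatim with $2\alpha$ in place of $\alpha$.

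There is no genuine obstacle in this last step: the theorem is a clean composition of Corollary~\ref{cor:4.8} (which itself rests on Theorem~\ref{thm:EConv} and hence on the discrete-approximation machinery behind Theorem~\ref{thm:approxbybvp}) with the purely algebraic Lemma~\ref{thm:error_evals}. The only points that deserve a moment's care are the harmless passage between equivalent matrix norms in fixed finite dimension, and the observation that $\min\{-x_-^N,x_+^N\}$ must be taken large enough for both the smallness condition $\varepsilon_N\le\varepsilon_0$ and the identification of $\sigma^N$ with the spectrum of $F_N$ (equivalently, of $\wti D$) to hold.
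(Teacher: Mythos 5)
Your proof is correct and takes exactly the paper's route: the paper itself presents Theorem \ref{thm:EvalsConv} simply as the composition of Corollary \ref{cor:4.8} (exponential bound on $\|D_j-D_j^N\|$) with the matrix perturbation estimate of Lemma \ref{thm:error_evals}, invoking Theorem \ref{simpleeigen} to know that $\rank D_0=\varkappa$ and $D$ has simple spectrum, which is precisely what you do. Your added remarks on finite-dimensional norm equivalence and on identifying $\sigma^N$ with $\sigma(F_N)\cap\Omega_0$ via Theorem \ref{discreteconverge} are harmless and consistent with the paper's framework.
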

\begin{remark}
We note that the simplicity of eigenvalues was only used in the very
last step \eqref{eq:comparespectra} of the proof of Lemma \ref{thm:error_evals}. Similar to Remark \ref{rem:jordanrobust} convergence
of spectra as $\varepsilon\rightarrow0$ still follows in the general case
from the perturbation theory in \cite{K80}. But now the rate is $\varepsilon^{\frac{1}{\mu}}$ in
\eqref{eq:comparespectra} where $\mu$ is the maximal algebraic multiplicity
of eigenvalues inside $\Gamma$. Correspondingly, the rate $\alpha$ in
\eqref{eq:MainEvalError} deteriorates to $\frac{\alpha}{\mu}$.
\end{remark}

%

\section{The Schr\"odinger operator on the line}
\label{s:appl}
We consider the eigenvalue problem for the one dimensional  Schr\"odinger operator, $H$,
\beq\lb{spSch} (H-\l)u=0,\; H=-d^2/dx^2+V(x),\;   x\in\bbR.
\enq
Here, the real valued potential $V$ satisfies $V\in L^1(\bbR)$,  the domain of $H$ is given by
\[\dom(H)=\big\{u\in L^2(\bbR): u, u' \in AC_{\loc}(\bbR),\;
-u''+Vu\in L^2(\bbR)\big\},\] 
and we assume that $\l\in\Omega=\bbC\setminus[0,\infty)$. Since $V\in L^1(\bbR)$, the essential spectrum of $H$ is equal to $[0,\infty)$, and the discrete spectrum consists of no
more than finitely many negative simple eigenvalues $0>\l_1>\l_2>\dots>\l_\k$, see, e.g.,
\cite[Sec.XIII.3]{RS78}, \cite[Sec.XVII.1.3]{CS}. The eigenvalue problem 
\eqref{spSch} can be written as the first order differential equation
\begin{equation}\label{foSch}
y'=A(\l,x)y, \,  A(\l,x)=A(\l,\infty)+B(x),\, x\in\bbR;
\end{equation}
here and below we denote
\begin{equation}\label{foSch2}
 A(\l,\infty)=\begin{pmatrix}0&1\\-\l&0\end{pmatrix},\,
B(x)=\begin{pmatrix}0&0\\V(x)&0\end{pmatrix},\, y(x)=\begin{pmatrix}u(x)\\u'(x)\end{pmatrix}.
\end{equation}
In Subsection \ref{ss:sodo} we consider the linear operator pencil of second order differential operators
$F^{(\text{II})}(\l)=H-\l I$, see \eqref{spSch}, acting from the space
$\cH=\dom(H)$ equipped with the graph norm into the space
$\cK=L^2(\bbR,\C^2)$. In Subsection \ref{ss:fodo} we consider the
nonlinear operator pencil of first order differential operators
$F^{(\text{I})}(\l)=-\partial_x+A(\l,\cdot)$, see \eqref{foSch}, \eqref{foSch2}, acting from the space $\cH$ as defined in \eqref{domainF} into the space $\cK=L^2(\bbR,\C^2)$.
Our objective is to illustrate the construction of the matrix $E$ from \eqref{defE}, \eqref{repEODE}, \eqref{cKf} and also its computation via some approximation arguments related to the boundary value problems on finite intervals, that is, to the equation
\beq\lb{BVPsch}-u''(x)+V(x)u(x)-\l u(x)=0,\; x\in[x_-^{N},x_+^{N}],\enq
equipped with appropriate boundary conditions at the endpoints $x_-^{N}$, $x_+^{N}$
satisfying $x_-^{N}\to-\infty$ and $x_+^{N}\to+\infty$ as $N\to\infty$,
and to the boundary value problems for the first order differential
equation \eqref{foSch}. 
In the current section we do not assume exponential decay of the
perturbation. Although we offer some explicit formulas for the matrix
$E$ and its approximation $E^N$ in terms of certain solutions of the
differential equations 
\eqref{spSch}, \eqref{foSch}, \eqref{foSch2}, we emphasize that they are mainly of theoretical value as our general approach in practical applications is {\em not} to use these formulas but instead  to construct $E^N$ by solving boundary value problems on finite intervals numerically.

\subsection{Second order differential operators}\label{ss:sodo}
We consider the linear operator pencil 
$F^{(\text{II})}(\l)=H-\l I$ with $H$ as in \eqref{spSch}. Our main tool will be the Jost solutions $u_\pm(\l,x)$, $x\in\R$, $\l\in\Omega=\C\setminus[0,\infty)$, of the second order Schr\"odinger differential equation
\eqref{spSch} which are uniquely determined as the solutions of the  Volterra integral equations
\begin{equation}\label{volt}u_\pm(\l,x)=e^{\pm\il x}-\int_0^{\pm\infty}\l^{-1/2}\sin(\l^{1/2}(x-\xi))V(\xi)u_\pm(\l,\xi)\,d\xi, \, x\in\R.
\end{equation} 
Here and everywhere below we choose the branch of the square root such that $\Im(\l^{1/2})>0$ for $\l\in\Omega$, in particular, $e^{\il x}\to0$ as $x\to+\infty$. It is well known that the Jost solutions  
satisfy the asymptotic boundary conditions
\begin{equation}\label{Jsas}
\lim_{x\to\pm\infty}e^{\mp\il x}u_\pm(\l,x)=1,
\end{equation}
they are holomorphic functions of $\l\in\Omega$,  for $\l<0$ they are real valued and positive for $\pm x$ sufficiently large, 
see, e.g., \cite[Chap.XVII]{CS}. The Wronskian 
\begin{equation}\label{defW}
\cW(\l)=\cW(u_-,u_+)=u_-(\l,x)u'_+(\l,x)-u'_-(\l,x)u_+(\l,x),\, x\in\R, \l\in\Omega, \end{equation} of the Jost solutions is equal to zero precisely at the points $\l_n\in\Omega$, the isolated eigenvalues of the Schr\"odinger operator $H$, where the exponentially decaying at $+\infty$ solution $u_+(\l_n,\cdot)$ is proportional to the exponentially decaying at $-\infty$ solution $u_-(\l_n,\cdot)$ with a nonzero constant $c_n$, that is, when
\begin{equation}\label{propJS}
u_+(\l_n,x)=c_nu_-(\l_n,x),\, x\in\R,\, c_n\in\C\setminus\{0\},\, n=1,\dots,\k.
\end{equation}

We refer to \cite{W87} for the general theory of Sturm-Liouville
differential  operators (see also \cite{W05} for a brief but
exceptionally readable account). 
In particular, due to $V\in L^1(\R)$
the Schr\"odinger operator $H$ is in the limit point case at
$\pm\infty$, the Jost solutions $u_\pm$ are $L^2$-solutions at
$\pm\infty$, and thus the resolvent operator
$\big(F^{(\text{II})}(\l)\big)^{-1}=(H-\l I)^{-1}$ for
$\l\in\Omega\setminus\{\l_1,\dots,\l_\k\}$ is the integral operator 
with the kernel 
\begin{equation}
R(\l,x,\xi)=\frac{1}{\cW(u_+,u_-)}\begin{cases}u_+(\l,x)u_-(\l,\xi), &-\infty<\xi\le x<+\infty,\\
u_-(\l,x)u_+(\l,\xi), &-\infty<x< \xi<+\infty.\end{cases}\end{equation}
Therefore, if $\widehat{w}_j,\widehat{v}_k\in L^2(\R)$ are chosen as indicated in Section \ref{s:abst}, that is, such that
\begin{equation*}
\rank\big(\langle \widehat{w}_j(\cdot), u_\pm(\l_n,\cdot)\rangle_{\R}\big)_{j,n=1}^{m,\k}\ge\k, \rank\big(\langle u_\pm(\l_n,\cdot), \widehat{v}_k(\cdot)\rangle_{\R}\big)_{n,k=1}^{\k,\ell}\ge\k,
\end{equation*}
then the matrix $E(\l)=\big(\langle\widehat{w}_j(\cdot), \big(\big(F^{(\text{II})}(\l)\big)^{-1}\widehat{v}_k\big)(\cdot)\rangle_\R\big)_{j,k=1}^{m,\ell}$ from \eqref{defE} is given by 
\begin{equation}\label{Ejk}
\begin{split}E_{jk}(\l)&=\frac{1}{\cW(u_+,u_-)}\int_{-\infty}^\infty \widehat{w}_j(x)u_+(\l,x)\int_{-\infty}^xu_-(\l,\xi)\widehat{v}_k(\xi)\,d\xi\,dx\\
&+\frac{1}{\cW(u_+,u_-)}\int_{-\infty}^\infty \widehat{w}_j(x)u_-(\l,x)\int^{\infty}_xu_+(\l,\xi)\widehat{v}_k(\xi)\,d\xi\,dx.
\end{split}\end{equation}

Integrating $E(\l)$ from \eqref{Ejk} over the contour $\Gamma$ from Section \ref{s:simpleev}
we thus obtain $\k=\rank D_0$ and formulas \eqref{eqfL}, \eqref{lkf}  for the eigenvalues of $H$.

We now equip equation \eqref{BVPsch}  with self-adjoint boundary conditions
\begin{equation}\label{Rbc}
u(x^N_\pm)\cos\omega_\pm-u'(x_\pm^N)\sin\omega_\pm=0, \, \text{ with some $\omega_\pm\in[0,\pi)$},
\end{equation}
and define the operator $H^N$ in $L^2([x_-^N,x_+^N])$ by
$H^N=-d^2/dx^2+V(x)$ with
\begin{equation*}
\begin{split}\dom(H^N)=\big\{u\in & L^2([x_-^N,x_+^N]):  u,  u' \in AC_{\loc}([x_-^N,x_+^N]),\; 
-u''+Vu\in L^2([x_-^N,x_+^N])\\
&\text{ and both boundary conditions \eqref{Rbc} hold}\big\},\end{split}\end{equation*}
cf.\ \cite[Sec.7]{W05}. Let $\widetilde{u}_\pm(\l,x)$ denote the $N$-dependent solutions of the Schr\"odinger equation \eqref{spSch} each of them satisfying one of the respective initial conditions
\begin{equation}
\widetilde{u}_\pm(\l,x_\pm^N)=e^{\il x_\pm^N}\sin\omega_\pm,\,
\widetilde{u}'_\pm(\l,x_\pm^N)=e^{\il x_\pm^N}\cos\omega_\pm.
\end{equation}
Since  $\widetilde{u}_+(\l,x)$, respectively, $\widetilde{u}_-(\l,x)$
satisfies the boundary condition \eqref{Rbc} at $x_+^N$, respectively, $x_-^N$ we conclude (see, e.g., \cite[p.84]{W05}) that the resolvent operator  $\big(F^{(\text{II}, N)}(\l)\big)^{-1}=(H^N-\l I)^{-1}$ is the integral operator 
with the kernel 
\begin{equation}\label{defRn}
R^N(\l,x,\xi)=\frac{1}{\cW(\widetilde{u}_+,\widetilde{u}_-)}\begin{cases}\widetilde{u}_+(\l,x)\widetilde{u}_-(\l,\xi), &x_-^N\le\xi\le x\le x_+^N,\\
\widetilde{u}_-(\l,x)\widetilde{u}_+(\l,\xi), &x_-^N\le x< \xi\le x_+^N.\end{cases}\end{equation}
In the following we consider the restriction $p_N u =u|_{[x_-^N,x_+^N]}$
(cf. \eqref{defpq}) 
as an operator from  $L^2(\R)$ into $L^2([x_-^N,x_+^N])$. Then it is known from 
\cite[Thm. 7.1]{W05} that $(F^{(II,N)}(\l))^{-1} p_N$ converges strongly
to $(F^{(II)}(\l))^{-1}$ in $L^2(\R)$. This is called generalized
strong resolvent convergence of $H^N$ to $H$ in \cite{W05} (here, one imbeds $L^2([x_-^N,x_+^N])$ into  $L^2(\R)$ by setting functions equal to zero in $\bbR\setminus[x_-^N,x_+^N]$). It follows that
the matrix $E(\l)$ from \eqref{defE} for the operator pencil
$F^{(\text{II})}(\l)$ can be written as the limit of the matrices
$E^N(\l)$ defined via the approximative operator pencils $F^{(\text{II},
N)}(\l)$.
\begin{proposition}
Assume $V\in L^1(\R)$ and let $E(\l)$ be defined as in \eqref{Ejk}. Then \[ E(\l)=\lim_{N\to\infty}E^N(\l) \text{ where }
E^N(\l)=\Big(\int_{x_-^N}^{x_+^N}\widehat{w}_j^N(x) \big((F^{(\text{II},
N)}(\l))^{-1}\widehat{v}_k^N\big)(x)\,dx\Big)_{j,k=1}^{m,\ell},\]
and we denote $\widehat{w}_j^N=p_N \widehat{w}_j$,  $\widehat{v}_k^N=p_N \widehat{v}_k$.
Similarly to \eqref{Ejk}, using \eqref{defRn} the matrix $E^N(\l)$  can be computed  by the formula
\begin{equation}\label{Enjk}
\begin{split}E_{jk}^N(\l)&=\frac{1}{\cW(\widetilde{u}_+,\widetilde{u}_-)}\int_{x_-^N}^{x_+^N} \widehat{w}_j^N(x)\widetilde{u}_+(\l,x)\int_{x_-^N}^x\widetilde{u}_-(\l,\xi)\widehat{v}_k^N(\xi)\,d\xi\,dx\\
&+\frac{1}{\cW(\widetilde{u}_+,\widetilde{u}_-)}\int_{x_-^N}^{x_+^N} \widehat{w}_j^N(x)\widetilde{u}_-(\l,x)\int^{x_+^N}_x\widetilde{u}_+(\l,\xi)\widehat{v}_k^N(\xi)\,d\xi\,dx.
\end{split}\end{equation}
\end{proposition}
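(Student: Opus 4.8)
The plan is to prove the two assertions separately. The limit relation $E(\l)=\lim_{N\to\infty}E^N(\l)$ will be read off from the generalized strong resolvent convergence recalled above, and the closed formula \eqref{Enjk} will be obtained by inserting the Green's kernel \eqref{defRn} into the definition of $E^N(\l)$.

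First I would fix $\l$ in the resolvent set of $H$ and take $N$ large enough that $\l\in\rho(H^N)$ as well, so that both resolvents exist, and identify $L^2([x_-^N,x_+^N])$ with its image under zero extension inside $L^2(\R)$. By the generalized strong resolvent convergence of $H^N$ to $H$ (see \cite[Thm.~7.1]{W05}) one has $\big(F^{(\text{II},N)}(\l)\big)^{-1}p_Nf\to\big(F^{(\text{II})}(\l)\big)^{-1}f$ in $L^2(\R)$ for every $f\in L^2(\R)$; applied with $f=\widehat v_k$, and since $\widehat v_k^N=p_N\widehat v_k$, this gives $\big(F^{(\text{II},N)}(\l)\big)^{-1}\widehat v_k^N\to\big(F^{(\text{II})}(\l)\big)^{-1}\widehat v_k$ in $L^2(\R)$. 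Because $\widehat w_j^N=p_N\widehat w_j$ agrees with $\widehat w_j$ on $[x_-^N,x_+^N]$ while the zero extension of $\big(F^{(\text{II},N)}(\l)\big)^{-1}\widehat v_k^N$ vanishes off this interval, the truncated pairing defining $E^N(\l)_{jk}$ equals the pairing $\big\langle\widehat w_j,\big(F^{(\text{II},N)}(\l)\big)^{-1}\widehat v_k^N\big\rangle_\R$ over all of $\R$. Since $\widehat w_j\in L^2(\R)$ makes $v\mapsto\langle\widehat w_j,v\rangle_\R$ a bounded, hence norm-continuous, functional on $L^2(\R)$, passing to the limit yields $E^N(\l)_{jk}\to\big\langle\widehat w_j,\big(F^{(\text{II})}(\l)\big)^{-1}\widehat v_k\big\rangle_\R$, which is precisely $E(\l)_{jk}$ as given by \eqref{Ejk}.

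For the explicit formula, I would use that for $\l\in\rho(H^N)$ the resolvent $\big(F^{(\text{II},N)}(\l)\big)^{-1}$ is integration against the kernel $R^N(\l,x,\xi)$ of \eqref{defRn}. Writing $\big(F^{(\text{II},N)}(\l)\big)^{-1}\widehat v_k^N(x)=\int_{x_-^N}^{x_+^N}R^N(\l,x,\xi)\widehat v_k^N(\xi)\,d\xi$, splitting the inner integral at $\xi=x$ according to the two cases in \eqref{defRn}, substituting into $E^N(\l)_{jk}=\int_{x_-^N}^{x_+^N}\widehat w_j^N(x)\,\big(F^{(\text{II},N)}(\l)\big)^{-1}\widehat v_k^N(x)\,dx$, and factoring out $\cW(\widetilde u_+,\widetilde u_-)^{-1}$ reproduces the two terms of \eqref{Enjk}; this step is the exact analogue, on a finite interval, of the derivation of \eqref{Ejk} from the kernel $R(\l,x,\xi)$. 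I do not expect a genuine obstacle: the analytic content---approximating $(H-\l)^{-1}$ by the finite-interval resolvents---has been delegated to \cite{W05}, so the only points needing care are the identification of the truncated pairing with a pairing over $\R$ (which is what lets the strong $L^2$-limit pass through $\langle\widehat w_j,\cdot\rangle_\R$) and the remark that for $\l\in\rho(H)$ one indeed has $\l\in\rho(H^N)$ for all large $N$, a point that is handled within the framework of \cite{W05}.
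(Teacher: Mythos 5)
Your proposal is correct and follows exactly the route the paper takes: the paper gives no separate proof, but derives the limit relation from the generalized strong resolvent convergence of $H^N$ to $H$ cited from \cite[Thm.7.1]{W05} (together with the zero-extension identification of $L^2([x_-^N,x_+^N])$ inside $L^2(\R)$), and obtains \eqref{Enjk} by inserting the kernel \eqref{defRn}, just as you do. Your write-up merely makes explicit the two small points the paper leaves implicit (the truncated pairing equals the pairing over $\R$, and continuity of $\langle\widehat w_j,\cdot\rangle_\R$ lets the strong $L^2$-limit pass through), so there is nothing to add.
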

We note in passing that $\lim_{N\to\infty}\cW(\widetilde{u}_+,\widetilde{u}_-)=C(\l)\cW({u}_+,{u}_-)$ where the factor  $C(\l)$ can be explicitly computed and is equal to zero precisely at the eigenvalues of the operators $H^0_+$ and $H_-^0$ defined as $H^0_\pm=-d^2/dx^2$ on $L^2((-\infty,x_+^N])$ and $L^2([x_-^N,\infty))$ with the domain determined by the respective boundary condition in \eqref{Rbc}, see \cite[Thm.3.3]{LS}.
\subsection{First order differential operators}\label{ss:fodo}
W ith $A(\l,x)$ as in \eqref{foSch},\eqref{foSch2} we consider the 
operator pencil $F^{(\text{I})}(\l)y=-y'+A(\l,x)y$ 
  and follow step-by-step the constructions in Subsections \ref{ss:defev} 
and \ref{sec3.2} culminating in formulas \eqref{repEODE} and \eqref{cKf}.

First, we need to choose the projections on the subspaces of the initial
values of the solutions of \eqref{foSch} exponentially decaying at $+\infty$ and $-\infty$ and construct their representation \eqref{reploc} and the respective Evans function \eqref{defevans1}.  The matrix $A(\l,\infty)$ for $\l\in\Omega$ has no pure imaginary eigenvalues and the differential equation 
$y'=A(\l,\infty)y$ has the exponential dichotomy on $\R$ with the dichotomy projection being the spectral projection $P(\l,\infty):=P_+(\l)=P_-(\l)$ for   $A(\l,\infty)$ corresponding to the eigenvalue $\il$. We recall that if $\l\in\Omega$ then the eigenvalues $\pm\il$ of $A(\l,\infty)$ satisfy $\Re(\il)<0<\Re(-\il)$. 

Since the perturbation $B$ in \eqref{foSch} satisfies  $\|B(\cdot)\|\in L^1(\R)$ by the general theory in e.g. \cite{BL,Cop,GLM07} there exist dichotomy projections 
\begin{equation}\label{PPM}
P_\pm(\l,x)=S(x,0,\l)P_\pm(\l,0)S(0,x,\l)
\end{equation} on $\R_\pm$ for the perturbed equation $y'=A(\l,x)y$ such
that the dichotomy subspaces $\mathcal{R}(P_+(\l,x))=\Span\{y_+(\l,x)\}$ and
$\mathcal{N}(P_-(\l,x))=\Span\{y_-(\l,x)\}$ are uniquely determined
while their direct complements $\mathcal{N}(P_+(\l,x))$ and
$\mathcal{R}(P_-(\l,x))$ are arbitrary. Here and below we use notation 
\beq\label{defypmx}
y_\pm(\l,x)=\begin{pmatrix}u_\pm(\l,x)\\u'_\pm(\l,x)\end{pmatrix},\quad x\in\bbR,\quad\l\in\Omega, \enq for the $(2\times 1)$ vector solutions of \eqref{foSch},\eqref{foSch2} which correspond to the Jost solutions $u_\pm(\l,x)$ of \eqref{spSch} defined via the Volterra equations \eqref{volt}. Also, given a vector $v=\begin{pmatrix}v_1\\v_2\end{pmatrix}\in\C^2$ we denote $v^\dag=\begin{pmatrix}-v_2\\v_1\end{pmatrix}$ and $v^\bot=(v^\dag)^\top=\begin{pmatrix} -v_2 & v_1\end{pmatrix}$ so that for any two vectors $v,w\in\C^2$ we have $\det(v\big| w)=v^\bot w=-v^\top w^\dag$; thus, if $v,w\in\C^2$ are linearly independent then the projection on $\Span\{v\}$ parallel to $\Span\{w\}$ is the matrix $(w^\bot v)^{-1}vw^\bot$.

To make a choice of the dichotomy projections $P_\pm(\l,x)$ one is tempted to let 
\begin{equation}\label{wDPP}\begin{split}
P_+(\l,0)&=(\cW(u_-,u_+))^{-1}y_+(\l,0)y_-(\l,0)^\bot,\\
I-P_-(\l,0)&=(\cW(u_+,u_-))^{-1}y_-(\l,0)y_+(\l,0)^\bot\end{split}
\end{equation} and then use \eqref{PPM} to define $P_\pm(\l,x)$ for $x\in\R_\pm$. This choice, however, is not satisfactory as these projections are meromorphic in $\Omega$ with the poles precisely at the eigenvalues $\l_n$ while the constructions in Subsection \ref{s:evfun} require holomorphy. Another choice is to normalize $u_\pm$ by letting $\widehat{u}_\pm(\l,x)=(u^2_\pm(\l,0)+u'^2_\pm(\l,0))^{-1/2}u_\pm(\l,x)$ and then replace $u_\pm$ and $y_\pm$ in \eqref{wDPP} by $\widehat{u}_\pm$ and $\widehat{y}_\pm$. This choice yields holomorphy of $P_\pm(\l,x)$ in $\Omega\setminus\Lambda$, where $\Lambda=\{\l: (u^2_+(\l,0)+u'^2_+(\l,0))(u^2_-(\l,0)+u'^2_-(\l,0))=0\}$, and admits the normalization of the type \eqref{normalrep2}, \eqref{holrep2} leading to the construction of the normalized Evans function $\cE_0$. However, the great disadvantage of this choice of $P_\pm(\l,x)$ is that these dichotomy projections are not asymptotic as $x\to\pm\infty$ to the spectral projections of $A(\l,\infty)$. 

We now construct the holomorphic dichotomy projections that are asymptotic to the spectral projections at infinity.
Our main tool will be the solutions $y_+^s(\l,\cdot)$,  $y_+^u(\l,\cdot)$ on $\R_+$ and $y_-^s(\l,\cdot), y_-^u(\l,\cdot)$ on $\R_-$  of the differential equation \eqref{foSch} satisfying the asymptotic boundary conditions
\begin{align}
\lim_{x\to+\infty}e^{-\il x}y_+^s(\l,x)&=\bv, \,
\lim_{x\to+\infty}e^{\il x}y_+^u(\l,x)=\bw,\label{LS1}\\
\lim_{x\to-\infty}e^{\il x}y_-^s(\l,x)&=\bw, \,
\lim_{x\to-\infty}e^{-\il x}y_-^u(\l,x)=\bv,\label{LS2}
\end{align}
where $\bv=\begin{pmatrix}1\\ \il\end{pmatrix}$, $\bw=\begin{pmatrix}1\\ -\il\end{pmatrix}$ are the eigenvectors of  $A(\l,\infty)=\begin{pmatrix}0&1\\-\l&0\end{pmatrix}$ such that $A(\l,\infty)\bv=\il\bv$, $A(\l,\infty)\bw=-\il\bw$.  The  spectral projections of the matrix $A(\l,\infty)$ are given by the formulas $P(\l,\infty)=(2\il)^{-1}\bv\bw^\bot$ and $I-P(\l,\infty)=-(2\il)^{-1}\bw\bv^\bot$.

Since $\|B(\cdot)\|\in L^1(\R)$, the existence of the solutions $y_+^{s,u}(\l,\cdot)$ on $\bbR_+$ satisfying \eqref{LS1} and the solutions $y_-^{s,u}(\l,\cdot)$ on $\bbR_-$ satisfying \eqref{LS2} is guaranteed by the celebrated Levinson theorem from asymptotic theory of differential equations (see, e.g., \cite[Probl.III.29]{CoL} or \cite[Thms.1.3.1,1.8.1]{E} and also \cite[Rem.7.11]{GLM07}). These solutions are obtained by solving certain inhomogeneous Fredholm type integral equations on semilines with holomorphic integral kernels and inhomogeneities.  In fact, inspecting the proof of the theorem, cf.\ \cite[Sec.1.4]{E} or \cite[Thm.8.3]{GLM07}, we observe that the solutions  $y_\pm^{s,u}(\l,\cdot)$ are holomorphic in $\l\in\Omega$ and limiting relations \eqref{LS1}, \eqref{LS2} hold uniformly in $\l$ on compact subsets of $\Omega$. 

We will now discuss how the solutions $y_+^{s,u}(\l,\cdot)$, $y_-^{s,u}(\l,\cdot)$ satisfying \eqref{LS1}, \eqref{LS2} are related to the solutions $y_\pm(\l,\cdot)$ defined in \eqref{defypmx} via the Jost solutions $u_\pm(\l,\cdot)$ of \eqref{volt}. In fact, the solution $y_+^s(\l,\cdot)$, respectively, $y_-^s(\l,\cdot)$ is uniquely determined by the first limiting relation in \eqref{LS1}, respectively, \eqref{LS2} since due to \eqref{Jsas} it is precisely the solution $y_+(\l,\cdot)$, respectively, $y_-(\l,\cdot)$ defined in \eqref{defypmx}: 
\begin{align}
y_\pm^s(\l,x)&=y_\pm(\l,x),\quad x\in\bbR_\pm, \quad\l\in\Omega.\label{LSs}\end{align}
The solution $y_+^u(\l,\cdot)$, respectively,  $y_-^u(\l,\cdot)$ is not unique and can be changed by adding  a summand proportional to $y_+(\l,\cdot)$, respectively, $y_-(\l,\cdot)$. A convenient choice of the solutions $y_\pm^u(\l,\cdot)$ is furnished by the formulas
\begin{equation}\begin{split}
y_+^u(\l,x)&=\frac{2\il}{\cW(u_-,u_+)}y_-(\l,x)-\sum_{n=1}^\k\frac{2\il \rho_n}{c_n(\l-\l_n)}y_+(\l,x),\quad x\in\bbR_+,\\
y_-^u(\l,x)&=\frac{2\il}{\cW(u_-,u_+)}y_+(\l,x)-\sum_{n=1}^\k\frac{2\il \rho_n c_n}{(\l-\l_n)} y_-(\l,x),\quad x\in\bbR_-,
\end{split}\label{LSu}
\end{equation}
where $y_\pm(\l,\cdot)$ are defined in \eqref{defypmx}, $\l_n$, $n=1,\dots,\k$, are the eigenvalues of $H$, the constants $c_n$ are taken from \eqref{propJS}, and we denote by $\rho_n$ the residue at $\l_n$ of the function $1/\cW(\l)$ for the Wronskian $\cW(\l)$ defined in  \eqref{defW}. As we will see in a moment, the solutions $y_\pm^{s}(\l,\cdot)$,
$y_\pm^{u}(\l,\cdot)$  defined in \eqref{LSs},\eqref{LSu} satisfy \eqref{LS1}, \eqref{LS2}. Using these solutions at $x=0$ we let 
\begin{equation}\label{rDPP}\begin{split}
P_+(\l,0)&=(2\il)^{-1}y_+^s(\l,0)y_+^u(\l,0)^\bot,\\
I-P_-(\l,0)&=-(2\il)^{-1}y_-^s(\l,0)y_-^u(\l,0)^\bot\end{split}
\end{equation} and then use \eqref{PPM} to define $P_\pm(\l,x)$ for $x\in\R_\pm$. By a direct calculation we also have $P_-(\l,0)=(2\il)^{-1}y_-^u(\l,0)y_-^s(\l,0)^\bot$.
\begin{lemma}\label{hPr} Assume that $V\in L^1(\R)$ and $\l\in\Omega=\C\setminus[0,\infty)$. Then  the projections 
\begin{align}\label{fDPP1}
P_+(\l,x)&=(2\il)^{-1}y_+^s(\l,x)y_+^u(\l,x)^\bot, \quad x\in\R_+,\\
P_-(\l,x)&=(2\il)^{-1}y_-^u(\l,x)y_-^s(\l,x)^\bot, \quad x\in\R_-,\label{fDPP2}
\end{align} defined via formulas \eqref{PPM}, \eqref{rDPP} are holomorphic in $\Omega$ and satisfy 
\begin{equation}\label{asF}
\lim_{x\to\pm\infty}P_\pm(\l,x)=P(\l,\infty).\end{equation} 
\end{lemma}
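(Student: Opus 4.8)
The plan is to prove the lemma in three stages: (a) establish that the closed-form expressions \eqref{LSs}, \eqref{LSu} define solutions $y^s_\pm(\l,\cdot)$, $y^u_\pm(\l,\cdot)$ of \eqref{foSch} that are holomorphic on all of $\Omega$ and realize the Levinson normalizations \eqref{LS1}, \eqref{LS2}; (b) deduce that the matrices in \eqref{rDPP} are genuine rank-one projections and that their conjugation \eqref{PPM} by the propagator $S(\cdot,0,\l)$ coincides with \eqref{fDPP1}, \eqref{fDPP2} and depends holomorphically on $\l\in\Omega$; (c) pass to the limit $x\to\pm\infty$ in \eqref{fDPP1}, \eqref{fDPP2} to obtain \eqref{asF}. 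Two facts are used throughout: the asymptotics of the Jost solutions $u_\pm$, and the observation that, because the coefficient matrix $A(\l,\cdot)$ in \eqref{foSch} has zero trace, the determinant $\det(z_1(\l,x)\,|\,z_2(\l,x))$ of any two solutions of \eqref{foSch} is independent of $x$.

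For stage (a) I would first record, besides \eqref{Jsas} and its differentiated form (which give $e^{-\il x}y_+(\l,x)\to\bv$ as $x\to+\infty$ and $e^{\il x}y_-(\l,x)\to\bw$ as $x\to-\infty$), the behaviour of the Jost vectors on the opposite half-line: by the Levinson theorem applied to \eqref{foSch} near $+\infty$, any solution $z(\l,\cdot)$ of \eqref{foSch} that is not a scalar multiple of $y_+(\l,\cdot)$ satisfies $e^{\il x}z(\l,x)\to(2\il)^{-1}\det(z(\l,\cdot)\,|\,y_+(\l,\cdot))\,\bw$ as $x\to+\infty$, the determinant being $x$-independent; symmetrically with $y_-$ at $-\infty$. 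Taking $z=y_-$ and $z=y_+$ gives $e^{\il x}y_-(\l,x)\to(2\il)^{-1}\cW(u_-,u_+)\,\bw$ as $x\to+\infty$ and $e^{-\il x}y_+(\l,x)\to(2\il)^{-1}\cW(u_-,u_+)\,\bv$ as $x\to-\infty$. Substituting these, together with $e^{\pm\il x}y_\pm(\l,x)\to0$ on the far half-line, into \eqref{LSu} produces $e^{-\il x}y^s_+\to\bv$, $e^{\il x}y^u_+\to\bw$ at $+\infty$ and $e^{\il x}y^s_-\to\bw$, $e^{-\il x}y^u_-\to\bv$ at $-\infty$, i.e. exactly \eqref{LS1}, \eqref{LS2}. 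Holomorphy of $y^s_\pm=y_\pm$ on $\Omega$ is classical; for $y^u_\pm$ the only possible singularities are the eigenvalues $\l_n$, and there the principal part of the first term of \eqref{LSu} (whose coefficient involves the residue $\rho_n$ of $1/\cW$ at $\l_n$) is cancelled by the subtracted term, because $y_+(\l_n,\cdot)=c_ny_-(\l_n,\cdot)$ by \eqref{propJS}; since also $\il=i\l^{1/2}$ is holomorphic on $\Omega$, $y^u_\pm(\l,\cdot)$ extends holomorphically across every $\l_n$.

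For stage (b), $x$-invariance of $\det(y^u_+(\l,x)\,|\,y^s_+(\l,x))$ together with its value $\det(\bw\,|\,\bv)=2\il$ at $x\to+\infty$ (read off from stage (a)) gives $y^u_+(\l,x)^\bot y^s_+(\l,x)\equiv2\il$; by the description of rank-one projectors recalled just before the lemma, this makes $(2\il)^{-1}y^s_+(\l,x)y^u_+(\l,x)^\bot$ idempotent, equal to the projector onto $\Span\{y^s_+(\l,x)\}$ along $\Span\{y^u_+(\l,x)\}$. Because $S(x,0,\l)$ carries $\Span\{y^s_+(\l,0)\}$ onto $\Span\{y^s_+(\l,x)\}$ and $\Span\{y^u_+(\l,0)\}$ onto $\Span\{y^u_+(\l,x)\}$, the conjugated operator \eqref{PPM} of the projector \eqref{rDPP} is again that same projector, hence coincides with \eqref{fDPP1}; holomorphy in $\l\in\Omega$ is inherited from that of $S(x,0,\l)$ and of $y^{s,u}_+(\l,0)$. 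The identical argument — now with $\det(y^u_-(\l,x)\,|\,y^s_-(\l,x))\equiv\det(\bv\,|\,\bw)=-2\il$, and the elementary identity $vw^\bot-wv^\bot=-\det(v\,|\,w)\,I_2$ in $\C^2$, which recasts $I-P_-(\l,0)$ from \eqref{rDPP} into the form $P_-(\l,0)=(2\il)^{-1}y^u_-(\l,0)y^s_-(\l,0)^\bot$ already noted in the text — yields \eqref{fDPP2} and its holomorphy. Stage (c) then follows by letting $x\to+\infty$ in \eqref{fDPP1}: from $e^{-\il x}y^s_+(\l,x)\to\bv$ and $e^{\il x}y^u_+(\l,x)^\bot\to\bw^\bot$ we obtain $y^s_+(\l,x)y^u_+(\l,x)^\bot\to\bv\bw^\bot$, so $P_+(\l,x)\to(2\il)^{-1}\bv\bw^\bot=P(\l,\infty)$; likewise $P_-(\l,x)\to(2\il)^{-1}\bv\bw^\bot=P(\l,\infty)$ as $x\to-\infty$, which is \eqref{asF}.

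The step I expect to be the real obstacle is stage (a): one has to pin down the exact scalar constants in the opposite-side asymptotics of the Jost solutions — in particular that $e^{\il x}u_-(\l,x)$ tends to $\cW(u_-,u_+)/(2\il)$, not merely to some unnamed constant — so that after substitution into \eqref{LSu} the boundary limits come out to be precisely the eigenvectors $\bv,\bw$ of $A(\l,\infty)$, and at the same time verify the cancellation of the poles at the $\l_n$, which is what upgrades holomorphy from $\Omega\setminus\{\l_1,\dots,\l_\k\}$ to all of $\Omega$. Once these two points are in place, stages (b) and (c) are routine manipulations with $2\times2$ determinants and the $x$-independence of Wronskians.
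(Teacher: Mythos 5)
Your proof is correct and follows essentially the same route as the paper's: verify that the explicit formulas \eqref{LSs}, \eqref{LSu} satisfy \eqref{LS1}, \eqref{LS2} via the far-side asymptotics of the Jost solutions and that the poles at the $\l_n$ cancel by \eqref{propJS}, then use constancy of the Wronskian to get $y_+^u(\l,x)^\bot y_+^s(\l,x)\equiv 2\il$ and pass to the limit. The only cosmetic differences are that the paper cites the far-side limits $e^{\pm\il x}u_\mp\to\cW/(2\il)$ from the literature where you rederive them via Liouville's formula, and that it checks compatibility with \eqref{PPM} through the adjoint equation where you use the range/kernel characterization of the projector.
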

\begin{proof}
Formulas \eqref{LS1}, \eqref{LS2} for the solutions $y_\pm^{s}(\l,\cdot)$,
$y_\pm^{u}(\l,\cdot)$ defined in \eqref{LSs}, \eqref{LSu} follow from
\eqref{Jsas} and  the relations $\lim_{x\to\pm\infty}e^{\mp\il x}u'_\pm(\l,x)=\pm\il$ and 
 \[\lim_{x\to\pm\infty}e^{\pm\il x}u_\mp(\l,x)=\cW(\l)/(2\il),
\, \lim_{x\to\pm\infty}e^{\pm\il x}u'_\mp(\l,x)=\mp\cW(\l)/2,\]
(see, e.g., \cite[Lem.3.1]{LS}).
Also, computing the residues of the RHS of \eqref{LSu} and using \eqref{propJS} we see that the solutions $y^s_\pm(\l,\cdot)$, $y^u_\pm(\l,\cdot)$ defined via \eqref{LSs}, \eqref{LSu} are holomorphic in $\l\in\Omega$. In the remaining part of the proof we concentrate on the case of $\R_+$ as the arguments for $\R_-$ are similar. The Wronskian of the solutions $y_+^s(\l,\cdot)$ and $y_+^u(\l,\cdot)$ is $x$-independent and by \eqref{LS1}, \eqref{LS2} we infer that
\begin{align*}
y^u_+(\l,x)^\bot y_+^s(\l,x)=(e^{\il x}y^u_+(\l,x))^\bot (e^{-\il x}y_+^s(\l,x))\to \bw^\bot\bv=2\il
\end{align*}
 as $x\to+\infty$. Therefore, 
 \begin{equation}\label{xcalc}
 y^u_+(\l,x)^\bot y_+^s(\l,x)=2\il\quad\text{ for all $x\in\R_+$ }
 \end{equation} 
 and thus \eqref{fDPP1} is a holomorphic projection. But $y_+^s(\l,x)=S(x,0,\l)y_+^s(\l,0)$ by the definition of $S(x,0,\l)$ and $y_+^u(\l,x)^\bot = y_+^u(\l,0)^\bot S(x,0,\l)^{-1}$ since  $w(x)=y_+^u(\l,0)^\bot S(x,0,\l)^{-1}$ satisfies the  adjoint equation $w'=-wA(\l,x)^\top$ and therefore should be of the form $w=y^\bot$ for a solution $y$ of the equation $y'=A(\l,x)y$. Thus, \eqref{fDPP1} is in concert with \eqref{PPM} and \eqref{rDPP}. Using \eqref{LS1}, \eqref{LS2} again we have
 \[y_+^s(\l,x)y^u_+(\l,x)^\bot = (e^{-\il x}y_+^s(\l,x))(e^{\il x}y^u_+(\l,x))^\bot\to \bv\bw^\bot\]
 as $x\to+\infty$ yielding \eqref{asF}. 
\hfill\end{proof}

We are ready to identify the ingredients in the representation \eqref{reploc} of the projections $\Pi_U(\l)=P_+(\l,0)$ and $\Pi_V(\l)=I-P_-(\l,0)$ on the holomorphic families of one dimensional subspaces $U(\l)=\Span\{y_+(\l,0)\}$ and $V(\l)=\Span\{y_-(\l,0)\}$. Indeed, \eqref{LSs} and formulas \eqref{fDPP1}, \eqref{fDPP2} show that  \eqref{reploc} holds with
\begin{align}
P(\l)&=y_+^s(\l,0),\, \Phi(\l)=(2\il)^{-1}(y_+^u(\l,0))^\dag,\\
Q(\l)&=-y_-^s(\l,0),\, \Psi(\l)=(2\il)^{-1}(y_-^u(\l,0))^\dag,
\end{align}
where the normalization \eqref{normloc1} has been shown in the proof of Lemma \ref{hPr}, see \eqref{xcalc}. Therefore, due to \eqref{LSs} the Evans function \eqref{defevans1} is given by
\begin{equation}\label{defEV}
\cE(\l)=\det\big(y_+(\l,0)\big|y_-(\l,0)\big)=\cW(u_-,u_+),
\end{equation}
while the matrices $\cY_U(\l),\cY_V(\l)$ in Theorem \ref{projecthol} are given by
\begin{equation}\label{defYUV}
\cY_U(\l)=y_+(\l,0)y_-(\l,0)^\bot,\, \cY_V(\l)=y_-(\l,0)y_+(\l,0)^\bot.
\end{equation}
Furthermore, the Green's kernels \eqref{greenkernels} of the Green's operators  \eqref{greenform} are given as follows:
\begin{align*}
G_+(x,\xi,\l)&=\frac{1}{2\il}\begin{cases}y_+^s(\l,x)y_+^u(\l,\xi)^\bot,&0\le\xi\le x,\\y_+^u(\l,x)y_+^s(\l,\xi)^\bot,&0\le x<\xi,\end{cases}\\
G_-(x,\xi,\l)&=\frac{1}{2\il}\begin{cases}y_-^u(\l,x)y_-^s(\l,\xi)^\bot,&\xi\le x\le0,\\y_-^s(\l,x)y_-^u(\l,\xi)^\bot,&x<\xi\le0,\end{cases}
\end{align*}
while the function \eqref{jumpop} and the vector $[\widehat{v}]_0=(\cG_-(\l)\widehat{v})(0-)-(\cG_+(\l)\widehat{v})(0+)$ from Theorem \ref{formulaE} are computed as follows (recall that
$y_\pm^s(\l,\cdot)=y_\pm(\l,\cdot)$ by \eqref{LSs}):
\begin{align*}
G(\l,x)&=\begin{cases}y_+(\l,x)y_-(\l,0)^\bot,&x\ge0,\\
y_-(\l,x)y_+(\l,0)^\bot,&x<0,\end{cases}\\
[\widehat{v}]_0&=\frac{1}{2\il}\Big(\int_{-\infty}^0y_-^u(\l,0)y_-(\l,\xi)^\bot\widehat{v}(\xi)\,d\xi-\int_0^\infty y_+^u(\l,0)y_+(\l,\xi)^\bot\widehat{v}(\xi)\,d\xi\Big).
\end{align*}
To give a compact formula for the singular part of the RHS of \eqref{repEODE} it is convenient to introduce, for given functions $\widehat{w},\widehat{v}\in L^2(\R,\C^2)$, the following $(2\times 1)$, $(2\times 1)$ and $(2\times 2)$ matrices:
\begin{align}
\widehat{W}(\l)&=\begin{pmatrix}\int_0^\infty y_+(\l,\xi)^\top\widehat{w}(\xi)\,d\xi \\\\
\int^0_{-\infty} y_-(\l,\xi)^\top\widehat{w}(\xi)\,d\xi\end{pmatrix},\,
\widehat{V}(\l)=\begin{pmatrix}\int_0^\infty y_+(\l,\xi)^\bot\widehat{v}(\xi)\,d\xi\\ \\
\int^0_{-\infty} y_-(\l,\xi)^\bot\widehat{v}(\xi)\,d\xi\end{pmatrix},\label{DEFwv}\\
\widehat{\cY}(\l)&=\frac{1}{2\il}\begin{pmatrix}-y_-(\l,0)^\bot y_+^u(\l,0)&
y_-(\l,0)^\bot y_-^u(\l,0)\\ -y_+(\l,0)^\bot y_+^u(\l,0)&
y_+(\l,0)^\bot y_-^u(\l,0) \end{pmatrix}.\label{DEFcy}
\end{align}
Plugging  \eqref{LSu} in \eqref{DEFcy}, a short calculation reveals the 
following concretization of 
formulas \eqref{repEODE} and  \eqref{cKf} for the case of the Schr\"odinger operator.
\begin{theorem}
\label{thm:singformjost}
Assume $V\in L^1(\R)$ and use
linearly independent functions $\widehat{w}_j,\widehat{v}_k\in L^2(\R,\C^2)$, $j=1,\dots,m$, $k=1,\dots,\ell$, in \eqref{DEFwv}. Then the singular part of the RHS of \eqref{repEODE} can be expressed as follows:
\begin{equation}\label{newEs}
\frac{1}{\cE(\l)}\langle\widehat{w}_j, G(\l,\cdot)[\widehat{v}_k]_0\rangle_\R=\widehat{W}_j(\l)^\top\,\widehat{\cY}(\l)\,\widehat{V}_k(\l), \,\l\in\Omega.
\end{equation} 
As in \eqref{cKf}, using formula \eqref{newEs} the singular part of $E_{jk}(\l)$ near the eigenvalue $\l_n$, $ n=1,\dots,\k$, of the operator $H$ can be computed as follows:
\begin{equation}\label{finans}
E_{jk}^{\text{sing}}(\l)=\frac{\rho_n}{\l-\l_n}\Big(\int\limits_{-\infty}^\infty
y_-(\l_n,\xi)^\top\widehat{w}_j(\xi)\,d\xi\Big)^\top\Big(\int\limits_{-\infty}^\infty
y_+(\l_n,\xi)^\bot\widehat{v}_k(\xi)\,d\xi\Big).
\end{equation} 
\end{theorem}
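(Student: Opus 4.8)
The plan is to start from the general representation provided by Theorem~\ref{formulaE}, namely $E_{jk}(\l)=\tfrac{1}{\cE(\l)}\langle\widehat{w}_j,G(\l,\cdot)[\widehat{v}_k]_0\rangle_\R+\langle\widehat{w}_j,\cG(\l)\widehat{v}_k\rangle_\R$, in which the second summand is holomorphic and therefore contributes nothing to the singular part. All ingredients of the first summand have already been made explicit for the first order Schr\"odinger pencil $F^{(\mathrm{I})}$ in the paragraphs preceding the theorem: the Evans function is $\cE(\l)=\cW(u_-,u_+)$ by \eqref{defEV}; the function $G$ of \eqref{jumpop} reads $G(\l,x)=y_+(\l,x)y_-(\l,0)^\bot$ for $x\ge0$ and $G(\l,x)=y_-(\l,x)y_+(\l,0)^\bot$ for $x<0$; and the explicit half line Green's kernels built from the holomorphic dichotomy projections \eqref{fDPP1}, \eqref{fDPP2} yield $[\widehat{v}_k]_0=\tfrac{1}{2\il}\bigl(y_-^u(\l,0)\widehat{V}_{k,2}(\l)-y_+^u(\l,0)\widehat{V}_{k,1}(\l)\bigr)$, where $\widehat{V}_{k,1},\widehat{V}_{k,2}$ denote the two components of $\widehat{V}_k$ from \eqref{DEFwv}.

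To obtain \eqref{newEs} I would exploit the rank one structure of $G(\l,\cdot)$. On each half line $G(\l,x)$ is the product of a column, $y_\pm(\l,x)$, and a row, $y_\mp(\l,0)^\bot$, so the pairing splits as $\langle\widehat{w}_j,G(\l,\cdot)[\widehat{v}_k]_0\rangle_\R=\bigl(\int_0^\infty y_+(\l,\xi)^\top\widehat{w}_j(\xi)\,d\xi\bigr)\,y_-(\l,0)^\bot[\widehat{v}_k]_0+\bigl(\int_{-\infty}^0 y_-(\l,\xi)^\top\widehat{w}_j(\xi)\,d\xi\bigr)\,y_+(\l,0)^\bot[\widehat{v}_k]_0$. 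The scalar integrals are exactly the two components of $\widehat{W}_j(\l)$ from \eqref{DEFwv}; inserting the explicit expression for $[\widehat{v}_k]_0$ turns the two remaining scalar factors into linear combinations of $y_\pm(\l,0)^\bot y_\pm^u(\l,0)$ with coefficients $\widehat{V}_{k,1},\widehat{V}_{k,2}$. Assembling everything as (row)$\times$($2\times2$ matrix)$\times$(column) reproduces $\widehat{W}_j(\l)^\top\widehat{\cY}(\l)\widehat{V}_k(\l)$ with $\widehat{\cY}$ exactly as in \eqref{DEFcy}; substituting \eqref{LSu} for $y_\pm^u(\l,0)$ and using the elementary identities $v^\bot v=0$ for $v\in\C^2$ and $y_-(\l,0)^\bot y_+(\l,0)=\cW(\l)=-\,y_+(\l,0)^\bot y_-(\l,0)$ puts $\widehat{\cY}$ into the closed form claimed, which is \eqref{newEs}.

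For \eqref{finans} I would take the residue at an eigenvalue $\l_n$ in the identity just proved. Since $\widehat{W}_j,\widehat{V}_k$ are holomorphic on $\Omega$, the singularity is carried by the factor $\cE(\l)^{-1}$ (equivalently by $y_\pm^u$ through \eqref{LSu}): by the definition of $\rho_n$ as the residue of $1/\cW$ one has $\cW(\l)=\rho_n^{-1}(\l-\l_n)+\mathcal{O}((\l-\l_n)^2)$, hence $\cE(\l)^{-1}=\tfrac{\rho_n}{\l-\l_n}+(\text{holomorphic})$. Evaluating $\widehat{\cY}$ at $\l_n$ with the help of \eqref{LSu}, $\cW(\l_n)=0$ and $\cW'(\l_n)=\rho_n^{-1}$, one checks that it degenerates to the rank one matrix $\binom{c_n^{-1}}{1}\!\begin{pmatrix}1 & c_n\end{pmatrix}$, so the $n$-th singular term of $E_{jk}$ equals $\tfrac{\rho_n}{\l-\l_n}\bigl(c_n^{-1}\widehat{W}_{j,1}(\l_n)+\widehat{W}_{j,2}(\l_n)\bigr)\bigl(\widehat{V}_{k,1}(\l_n)+c_n\widehat{V}_{k,2}(\l_n)\bigr)$. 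Finally the proportionality \eqref{propJS}, which also gives $y_+(\l_n,\cdot)^\bot=c_n\,y_-(\l_n,\cdot)^\bot$, re-assembles the two half line integrals into the full line integrals $\int_\R y_-(\l_n,\xi)^\top\widehat{w}_j(\xi)\,d\xi$ and $\int_\R y_+(\l_n,\xi)^\bot\widehat{v}_k(\xi)\,d\xi$, which yields \eqref{finans}. As an independent check one may recover the same expression from the abstract Keldysh formula \eqref{cKf} via Theorem~\ref{thmSum}, identifying the eigenfunction $v_n$ of $F^{(\mathrm{I})}(\l_n)$ with a multiple of $y_-(\l_n,\cdot)$ and the dual eigenfunction $w_n$ with a multiple of $y_-(\l_n,\cdot)^{\dag}$, the two constants being fixed by the normalization \eqref{normalize}.

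The main obstacle is the linear-algebra bookkeeping: one must track the $2\times2$ matrix products and the maps $v\mapsto v^\bot$, $v\mapsto v^{\dag}$ carefully enough to (i) recognize the middle matrix of the bilinear form as precisely $\widehat{\cY}$ from \eqref{DEFcy}, and (ii) compute correctly the rank one degeneration of $\widehat{\cY}$ at $\l_n$ together with its residue. If one also carries out the cross-check against \eqref{cKf}, the delicate point is to reconcile the constants $\rho_n$, $c_n$ from the Jost/Wronskian description with the pair $v_n,w_n$ normalized by $w_n^\top F'(\l_n)v_n=1$.
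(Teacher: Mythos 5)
Your proposal is correct and follows essentially the same route as the paper, whose proof consists precisely of the explicit formulas for $G$, $[\widehat v_k]_0$ and $\widehat{\cY}$ derived in the preceding paragraphs, the substitution of \eqref{LSu} into \eqref{DEFcy}, and the residue computation at $\l_n$ using \eqref{propJS} and $\rho_n=1/\cW'(\l_n)$. One remark: your own splitting actually yields $\langle\widehat w_j, G(\l,\cdot)[\widehat v_k]_0\rangle_\R=\widehat W_j(\l)^\top\widehat{\cY}(\l)\widehat V_k(\l)$ \emph{without} the prefactor $1/\cE(\l)$ (indeed $\widehat{\cY}$ is holomorphic, with rank-one value $\bigl(\begin{smallmatrix}c_n^{-1}\\1\end{smallmatrix}\bigr)(1\;\;c_n)$ at $\l_n$), so the factor $1/\cE(\l)$ in \eqref{newEs} should appear on both sides; your residue computation in the second step treats it this way and therefore arrives at \eqref{finans} correctly.
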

We recall that at $\l=\l_n$ the solutions $y_+(\l_n,\cdot)$ and $y_-(\l_n,\cdot)$ from \eqref{defypmx} are proportional, see \eqref{propJS}, and that the residue $\rho_n$ of the function $1/\cW(\l)$ at the point $\l_n$ is given by
\begin{equation}\label{rhon}
\rho_n=\frac{1}{2\pi i}\int_{\gamma_n}\frac{d\l}{\cW\big(u_-(\l,\cdot), u_+(\l,\cdot)\big)}
\end{equation}
for a sufficiently small circle $\gamma_n$ centered at $\l_n$, see \eqref{defW}.

We now consider approximation \eqref{BVPsch} of the Schr\"odinger equation on the finite segment $[x_-^N,x_+^N]$.
 Let $u_\pm^N$ denote the Jost solutions corresponding to the truncated potential $V^N$ defined on $\R$ by $V^N(x)=V(x)$, $x\in[x_-^N,x_+^N]$ and $V^N(x)=0$ otherwise, that is, the solutions of the truncated Volterra equations
\begin{equation}\label{volttr}u_\pm^N(\l,x)=e^{\pm\il x}-\int_0^{x_\pm^N}\l^{-1/2}\sin(\l^{1/2}(x-\xi))V(\xi)u_\pm^N(\l,\xi)\,d\xi, \, x\in\R.
\end{equation}
Similarly to \eqref{defypmx}, we denote by 
\beq\label{defypmxN}
y_\pm^N(\l,x)=\begin{pmatrix}u^N_\pm(\l,x)\\ (u^N_\pm)'(\l,x)\end{pmatrix},\quad x\in\bbR,\quad\l\in\Omega,
\enq the corresponding solutions of the first order differential equation \eqref{foSch}, \eqref{foSch2}. It is easy to see from \eqref{volttr} that these solutions satisfy the following conditions:
\begin{equation}\label{cypmn}
y_+^N(\l,x_+^N)=e^{\il x_+^N}\bv,\quad
y_-^N(\l,x_-^N)=e^{-\il x_-^N}\bw,
\end{equation}
where $\bv,\bw$ are the eigenvectors of $A(\l,\infty)$ defined after equations \eqref{LS1}, \eqref{LS2}. 

We will now identify the boundary conditions as required in
\eqref{eq:inhomapprox} and \eqref{eq:projbc}. Since
$A_+(\l)=A_-(\l)=A(\l,\infty)$, we have
$P_+(\l)=P_-(\l)=(2\il)^{-1}\bv\bw^\bot$, $\mathcal{R}(P_+(\l))=\Span\{\bv\}$,
$\mathcal{N}(P_+(\l)=\Span\{\bw\}$, and thus the discussion in Section \ref{s:converge} leading to \eqref{eq:projbc} yields
\begin{equation}
R_+(\l)=\frac{1}{2\il}\begin{pmatrix}0&0\\\il&-1\end{pmatrix},\quad
R_-(\l)=\frac{1}{2\il}\begin{pmatrix}\il&1\\0&0\end{pmatrix}.
\end{equation}
In particular, due to \eqref{cypmn} the solutions $y_\pm^N(\l,\cdot)$ from \eqref{defypmxN} satisfy the boundary conditions
\begin{equation}\label{BCR}
R_-(\l)y(\l,x_-^N)+R_+(\l)y(\l,x_+^N)=0
\end{equation}
used in \eqref{eq:inhomapprox} to define the approximate operator pencil $F_N^{(\text{I})}(\l)$.

We are ready to formulate the final convergence result of this section  for the Schr\"odinger case.
Let $y_\pm^N(\l,\cdot)$ from \eqref{defypmxN} be the solutions of the equation \eqref{foSch}, \eqref{foSch2} satisfying
\eqref{volttr}, \eqref{cypmn}, and choose linearly independent functions $\widehat{w}_j,\widehat{v}_k\in L^2(\R,\C^2)$, $j=1,\dots,m$, $k=1,\dots,\ell$. Similarly to \eqref{finans}, at the eigenvalues $\l_n$, $n=1,\dots,\k$, of the operator $H$ we consider
\begin{equation}\label{finansn}
E_{jk}^{N,{\rm sing}}(\l)=\frac{\rho^N_n}{\l-\l_n}\Big(\int\limits_{-\infty}^\infty
y_-^N(\l_n,\xi)^\top\widehat{w}_j(\xi)\,d\xi\Big)^\top\Big(\int\limits_{-\infty}^\infty
y_+^N(\l_n,\xi)^\bot\widehat{v}_k(\xi)\,d\xi\Big),
\end{equation} 
where $\rho_n^N$ is defined by \eqref{rhon} with the Jost solutions $u_\pm(\l,\cdot)$ replaced by $u_\pm^N(\l,\cdot)$.
\begin{theorem}\label{EntoE} Assume $V\in L^1(\R)$.  Then the
  singular part \eqref{cKf} of the matrix $E(\l)$ associated with the
  operator pencil $F^{(\text{I})}(\l)$ on the whole line is the limit as
  $N\to\infty$ of the matrices $E^{N,{\rm sing}}(\l)$ in \eqref{finansn} associated with the operator pencil $F_N^{(\text{I})}(\l)$ on $[x_-^N,x_+^N]$ with the boundary conditions \eqref{BCR}.
\end{theorem}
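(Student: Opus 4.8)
The strategy is to compare the two \emph{explicit} formulas \eqref{finans} and \eqref{finansn} factor by factor. For each fixed $n=1,\dots,\k$, both right-hand sides are the product of a scalar residue ($\rho_n$, resp.\ $\rho_n^N$), a left integral built from $y_-(\l_n,\cdot)$, resp.\ $y_-^N(\l_n,\cdot)$, and $\widehat{w}_j$, and a right integral built from $y_+(\l_n,\cdot)$, resp.\ $y_+^N(\l_n,\cdot)$, and $\widehat{v}_k$, divided by the common factor $(\l-\l_n)^{-1}$. Hence, for fixed $n,j,k$, it suffices to prove the three convergences
\[
\rho_n^N\to\rho_n,\qquad
\int_\R y_-^N(\l_n,\xi)^\top\widehat{w}_j(\xi)\,d\xi\to\int_\R y_-(\l_n,\xi)^\top\widehat{w}_j(\xi)\,d\xi,
\]
together with the analogous one with $y_+^N,y_+$, $\widehat{v}_k$, and the $\bot$-pairing; summing over $n$ and recalling \eqref{cKf} (cf.\ Theorem~\ref{thm:singformjost}) then finishes the proof. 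Everything rests on one elementary fact about the truncated Jost solutions, which I would establish first.

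\emph{Convergence of Jost solutions.} The truncated potential obeys $V^N\to V$ in $L^1(\R)$, so comparing the Volterra equations \eqref{volttr} and \eqref{volt} and running the standard Gronwall / Neumann-series estimate for these Volterra operators --- uniformly for $\l$ in compact subsets of $\Omega$ --- the renormalized functions $m_\pm^N(\l,x):=e^{\mp\il x}u_\pm^N(\l,x)$ converge to $m_\pm(\l,x):=e^{\mp\il x}u_\pm(\l,x)$ uniformly for $x\in\R_\pm$ and $\l$ on compacts, with uniform-in-$N$ bounds $|m_\pm^N(\l,x)|\le C$ and a rate governed by $\|V^N-V\|_{L^1}$; the same holds for the $x$-derivatives. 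Consequently $y_\pm^N(\l,\cdot)\to y_\pm(\l,\cdot)$ locally uniformly in $x$, locally uniformly in $\l\in\Omega$, and on $\R_\pm$ the functions $y_\pm^N(\l_n,\cdot)$ decay exponentially at a rate uniform in $N$. The first convergence is now immediate: the Wronskian $\cW^N(\l):=\cW(u_-^N(\l,\cdot),u_+^N(\l,\cdot))$ of \eqref{defW} is a bilinear expression in $u_\pm^N$ and $(u_\pm^N)'$ at a single point, so $\cW^N\to\cW(u_-,u_+)$ uniformly on the fixed circle $\gamma_n$; since $\l_n$ is the only zero of $\cW(u_-,u_+)$ inside $\gamma_n$, this Wronskian is bounded away from $0$ on $\gamma_n$, whence $1/\cW^N\to 1/\cW(u_-,u_+)$ uniformly there and $\rho_n^N\to\rho_n$ by \eqref{rhon}.

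\emph{The integral factors --- the main obstacle.} The delicate point is that $y_+^N(\l_n,\cdot)$ and $y_-^N(\l_n,\cdot)$ are in general \emph{not} in $L^2(\R)$: at $\l=\l_n$ the truncated Jost solution $u_+^N$ (resp.\ $u_-^N$) acquires a component of the solution growing at $-\infty$ (resp.\ $+\infty$), whose coefficient equals $\cW^N(\l_n)$ up to the normalizing constant $2\il$ --- this tends to $\cW(u_-,u_+)(\l_n)=0$ but is nonzero for finite $N$. For the integrals over $\R$ in \eqref{finansn} to be well defined and to pass to the limit I would use, as throughout Section~\ref{s:converge}, that $\widehat{w}_j$ and $\widehat{v}_k$ are compactly supported (or, more generally, decay at $\pm\infty$ faster than $e^{-|\l_n|^{1/2}|x|}$). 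In the compactly supported case the argument is clean: once $N$ is so large that $[x_-^N,x_+^N]$ contains $\supp\widehat{w}_j\cup\supp\widehat{v}_k$ for all $j,k$, each integrand in \eqref{finansn} is supported on a fixed compact set on which the growing tails play no role, and the locally uniform convergence of the Jost solutions together with dominated convergence yields the two required integral limits; in the fast-decay case one additionally bounds the tail contribution on $\R\setminus[x_-^N,x_+^N]$ by $C\,|\cW^N(\l_n)|=o(1)$. Multiplying the three convergent factors then gives $E^{N,\mathrm{sing}}_{jk}(\l)\to E^{\mathrm{sing}}_{jk}(\l)$ entrywise, and summing over $n$ completes the proof. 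The genuine difficulty is exactly this loss of $L^2$-membership, which forces the argument to exploit decay of the test functions rather than any uniform $L^2$-control of $y_\pm^N(\l_n,\cdot)$ on all of $\R$.
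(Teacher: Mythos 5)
Your proof is correct, and it reaches the same three-factor reduction (residue, left integral, right integral) as the paper, but the technical core is genuinely different. The paper never touches the Volterra equations \eqref{volt}, \eqref{volttr} directly: it expands the truncated solutions in the Levinson basis, $y_\pm^N=\alpha_\pm^N y_\pm^s+\beta_\pm^N y_\pm^u$ with $y_\pm^u$ given by the explicit formulas \eqref{LSu}, solves for the coefficients from the boundary data \eqref{cypmn} and the Wronskian normalization, and shows $\alpha_\pm^N\to1$ together with $e^{\mp2\il x_\pm^N}\beta_\pm^N\to0$ (see \eqref{abp}, \eqref{abm}); this yields both the Wronskian convergence $\rho_n^N\to\rho_n$ and the $L^2$ estimate \eqref{llm} on $\R_\pm\cap[x_-^N,x_+^N]$, after which Cauchy--Schwarz finishes the integral factors. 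You instead run the Gronwall/Neumann-series comparison of the two Volterra equations for the renormalized Jost functions $e^{\mp\il x}u_\pm^N$, using only $\|V^N-V\|_{L^1}\to0$; this buys locally uniform convergence plus $N$-uniform exponential decay on $\R_\pm$ without ever introducing the unstable solutions $y_\pm^u$, so it is more elementary and self-contained (though it gives up the quantitative information encoded in $\beta_\pm^N$). Your treatment of the integral factors is in fact \emph{more} careful than the paper's: you correctly observe that $y_+^N(\l_n,\cdot)$ carries a growing component at $-\infty$ of size $O(\cW^N(\l_n))$, so the integrals over all of $\R$ in \eqref{finansn} need compact support (or sufficiently fast decay) of $\widehat w_j,\widehat v_k$ to make sense and to pass to the limit; the paper's appeal to \eqref{llm}, which controls the difference only on $\R_\pm\cap[x_-^N,x_+^N]$, leaves this tail contribution implicit. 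Since the test functions used throughout Section \ref{s:converge} are compactly supported, this is a presentational gap in the paper rather than an error, and your explicit handling of it is welcome.
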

\begin{proof}
 Since the solutions $y_+^s(\l,\cdot)$ and $y_+^u(\l,\cdot)$ from \eqref{LS1} are linearly independent solutions of \eqref{foSch}, \eqref{foSch2} and $y_+^N(\l,\cdot)$ is a solution of the same equation, we may choose constants $\alpha^N_+$, $\beta^N_+$ such that $y_+^N(\l,x)=\alpha_+^Ny_+^s(\l,x)+\beta_+^Ny_+^u(\l,x)$ for all $x\in\bbR_+$. Solving the last equation for $\alpha_+^N$, $\beta_+^N$ at $x=x_+^N$ and using \eqref{cypmn}, \eqref{xcalc} yields
\begin{equation}\label{abp}\begin{split}
&\alpha_+^N=\frac{\det\big(y_+^N(\l,x_+^N)\big| y_+^u(\l,x_+^N)\big)}{-2\il}=
\frac{\det\big(\bv\big|e^{\il x_+^N} y_+^u(\l,x_+^N)\big)}{-2\il}\to1,\\
&e^{-2\il x_+^N}\beta_+^N=\frac{\det\big(e^{-\il x_+^N}y_+^s(\l,x_+^N)\big|\bv\big)}{-2\il}\to0
\end{split}\end{equation}
as $N\to\infty$ due to \eqref{LS1} (we recall that $\Re(\il)<0$). A similar argument shows that if
$y_-^N(\l,x)=\alpha_-^Ny_-^s(\l,x)+\beta_-^Ny_-^u(\l,x)$ then
\begin{equation}\label{abm}
\alpha_-^N\to 1,\quad e^{2\il x_-^N}\beta_-^N\to0\quad\text{as $N\to\infty$}.
\end{equation}
Computing Wronskians and using \eqref{LSs}, \eqref{abp}, \eqref{abm} we conclude
that 
\[
\cW(y_-^N(\l,\cdot),y_+^N(\l,\cdot))=\cW\big(\alpha_-^Ny_-^s(\l,\cdot)+\beta_-^Ny_-^u(\l,\cdot), \alpha_+^Ny_+^s(\l,\cdot)+\beta_+^Ny_+^u(\l,+)\big)
\]
converges to $\cW(y_-(\l,\cdot),y_+(\l,\cdot))$ as $N\to\infty$ (and even uniformly in $\lambda$ on compacta in $\Omega$ because the convergence in \eqref{LS1}, \eqref{LS2} is uniform on compacta). It follows that  $\rho^N_n\to\rho_n$ as $N\to\infty$.

It remains to show that the integral terms in \eqref{finansn} converge to the respective integral terms in \eqref{finans}.  The latter fact follows from the  assertions
\begin{equation}\label{llm}
\|y_\pm(\l,\cdot)-y_\pm^N(\l,\cdot)\|_{L^2(\R_\pm\cap[x_-^N,x_+^N])}\to0\quad\text{as $N\to\infty$}
\end{equation}
using the  Cauchy-Schwartz inequality.
To prove \eqref{llm} for $\R_+$ (the argument for  $\R_-$ is similar), we recall that $y^s_+(\l,\cdot)=y_+(\l,\cdot)$ by \eqref{LSs}, and then use  the representation $y_+^N(\l,\cdot)=\alpha_+^Ny_+(\l,\cdot)+\beta_+^Ny_+^u(\l,\cdot)$ to estimate
\begin{align*}
\|y_+(\l,\cdot)&-y_+^N(\l,\cdot)\|_{L^2([0,x_+^N])}\le\big|1-\alpha_+^N\big|
\,\|y_+(\l,\cdot)\|_{L^2([0,x_+^N])}\\ &
\qquad+|\beta_N|\big(\int_0^{x_+^N}e^{-2\Re(\il)x}\cdot e^{2\Re(\il)x}|y_+^u(\l,x)|^2\,dx \big)^{1/2}\\
&\le\big|1-\alpha_+^N\big|
\,\|y_+(\l,\cdot)\|_{L^2(\R_+)}+ c|\beta_N|\big(e^{-2\Re(\il)x_+^N}-1\big)^{1/2}
\end{align*}
since the function $e^{\il x}y_+^u(\l,x)$ is bounded on $\R_+$ due to \eqref{LS1}. Now \eqref{abp} implies \eqref{llm} finishing the proof of the 
theorem.
\hfill\end{proof}



\section{Numerical experiments for the FitzHugh-Nagumo equation}
\label{sec:num}
We apply the contour-method to investigate
spectral stability of traveling waves in the FitzHugh-Nagumo system
(FHN). A traveling wave in FHN is a solution of
\begin{equation}\label{eq:fhn01}
  \begin{aligned}
    0&=u''+cu'+u-\tfrac{1}{3}u^3-v,\\
    0&=cv'+\Phi(u+a-bv).
  \end{aligned}
\end{equation}
For the standard parameter values $a=0.7$, $b=0.8$, $\Phi=0.08$
(see \cite{M81},\cite[\S~5]{BL})
one finds both, a stable pulse with speed $c \approx -0.812$ and an
unstable pulse with speed $c\approx-0.514$. We choose the latter one
and denote it by $(\bar{u},\bar{v})$.

Linearization about this pulse leads to the linear operator
\[
\cL \begin{pmatrix}u\\v
\end{pmatrix}
=\begin{pmatrix} u''+cu'+u-\bar{u}^2u-v\\
  cv'+\Phi u-\Phi b v
\end{pmatrix}.\]
For spectral stability one has to analyze the location of the spectrum
of this operator. The eigenvalue problem reads
\[(\lambda I-\cL )\begin{pmatrix} u\\v
\end{pmatrix} = \begin{pmatrix} 0\\0
\end{pmatrix}\;\text{in}\; L^2(\R,\C^2).\]
From the dispersion relation one can show (e.g. \cite[\S~5]{BL} or
\cite[\S~7]{RM12}) that there is
no essential spectrum in $\Omega=\{\Re \lambda > -0.064\}$. 
Thus, we may use the circle $\Gamma=\{\lambda\in
\mathbb{C}:|\lambda-1|=1.05\}$ for the contour method.
Equation \eqref{yeq} becomes
\begin{equation}\label{eq:yeqFHN2ndOrd}
  (\lambda I-\cL )\begin{pmatrix} y_k^1(\lambda)\\y_k^2(\lambda) \end{pmatrix} 
    = \widehat{v}_k \;\text{in}\; L^2(\R,\C^2).
\end{equation}
For the computation we choose
$\widehat{v}_k$ as a random linear combination from the
$2M$-dimensional space
\[\widetilde{\mathcal{K}}_M=
\Span\Bigl\{ \begin{pmatrix} \varphi_j \\ 0 \end{pmatrix},
             \begin{pmatrix} 0 \\ \varphi_j  \end{pmatrix} :
\,j=0,\ldots,M-1\Bigr\} \; \subset L^2(\mathbb{R},\mathbb{C})^2,
\]
\[ \varphi_j(x)=\max\{0,1-|x-x_j|\},
\quad x_j=-5+\frac{10j}{M-1},\,j=0,\dots,M-1.
\]
More precisely, $ \widehat{v}_k=\sum_{j=0}^{M-1}\bigl(\xi^j_k
(\varphi_j,0)^\top +\zeta_k^j (0,\varphi_j)^\top\bigr)$, with
$\xi_k^j$ and $\zeta_k^j$ independent and normally distributed random
variables.

With $z =(y_k^1,(y_k^1)',y_k^2)^\top$ we rewrite \eqref{eq:yeqFHN2ndOrd}
as a first order system (see Section \ref{sec3.2}),
\begin{equation}\label{eq:yeqFHN1stOrd}
\begin{pmatrix}z_1\\z_2\\z_3
\end{pmatrix}'
=\begin{pmatrix}
  z_2\\ 
  \lambda z_1-cz_2-z_1+\bar{u}^2 z_1+z_3\\
  -\tfrac{1}{c}\Phi z_1+\tfrac{1}{c}(\lambda+\Phi b)
  z_3
\end{pmatrix}+
\begin{pmatrix}
  0\\-\widehat{v}_k^1\\-\tfrac{1}{c}\widehat{v}_k^2
\end{pmatrix}
\;\text{in}\; L^2(\R,\C^3).
\end{equation}
For the approximation \eqref{eq:inhomapprox} of \eqref{eq:yeqFHN1stOrd}
on a bounded interval we choose projection and periodic boundary
conditions, which both satisfy \eqref{eq:detcond}. For these 
boundary conditions, we showed in
Theorem~\ref{thm:EConv} exponential error estimates (with a twice as
good rate for the projection boundary conditions), when the
$\widehat{v}_k$ are compactly supported, a property shared by the linear
combinations of hat functions  
$\widehat{v}_k\in\widetilde{\mathcal{K}}_M$.
Furthermore, for the contour method we choose 
\begin{itemize}
  \item $l=10$ right hand sides from $\widetilde{\mathcal{K}}_{40}$,
  \item $m=401$ functionals $\widehat{w}_j=\delta_{x_j}\in
    \mathcal{H}'$, where $\delta_{x_j}\in \mathcal{M}_b^c$ is the
    Dirac measure at $x_j=-2+\frac{j}{100}$, $j=0,\dots,400$,
  \item symmetric finite intervals $J=[-\frac{L}{2},\frac{L}{2}]$ of length $L$,
  \item the number $\varkappa$ for the rank test \eqref{vkrank} such
that the singular values of $D_0^N$ satisfy
 $\sigma_1 \ge \ldots \ge \sigma_{\varkappa} \ge \theta \sigma_1 > \sigma_{\varkappa+1}$ for some given $\theta >0$.
\end{itemize}
There are always two eigenvalues inside the circle, the zero and the unstable
eigenvalue, see Figure \ref{fig:ChoiceOfKappa} (a). We take the unstable
eigenvalue for tests of accuracy. A highly accurate reference eigenfunction
 is computed by applying Newton's 
method to  the discrete  boundary eigenvalue problem (trapezoidal method on a
large interval $[-50,50]$ at small step-size $\Delta x=0.01$).
The same step-size $\Delta x =0.01$ is used when solving 
\eqref{eq:inhomapprox} on equidistant grids for the contour method.

\subsection{Dependence on the interval size}
In our first experiment we vary the length of the interval for the
finite boundary value problems \eqref{eq:inhomapprox}. The other data is
fixed. In particular, we use a large number 
 of quadrature points ($M=100$)  on the contour and  $\theta=10^{-8}$ for determining
 $\varkappa$ as above to keep the influence of the rank test small.
\begin{figure}[tb!]
  \psfrag{Interval length L}{\small Intervall length $L$}
  \psfrag{log10(|la-la(L)|)}{\small $\log_{10}(|\lambda_u-\lambda_u(L)|)$}
  \psfrag{-10}{\small $-10$}
  \psfrag{-8}{\small $-8$}
  \psfrag{-6}{\small $-6$}
  \psfrag{-4}{\small $-4$}
  \psfrag{-2}{\small $-2$}
  \psfrag{0}{\small $0$}
  \psfrag{5}{\small $5$}
  \psfrag{10}{\small $10$}
  \psfrag{15}{\small $15$}
  \psfrag{20}{\small $20$}
  \psfrag{25}{\small $25$}
  \psfrag{30}{\small $30$}
  \begin{center}
    \includegraphics[scale=0.7]{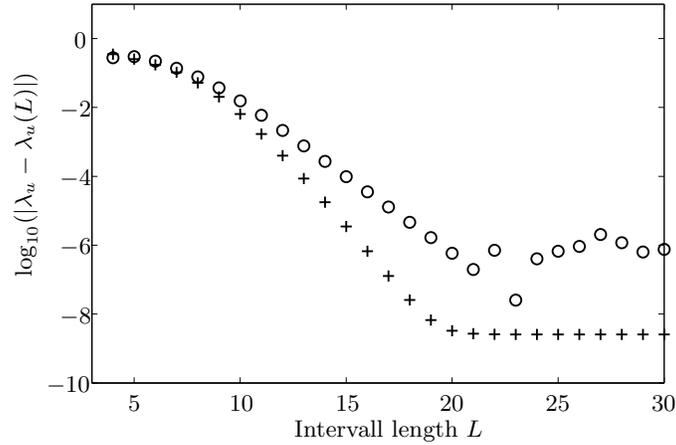}
  \end{center}
  \caption{Convergence of the approximate unstable eigenvalue for
  projection bc's ($+$) and periodic bc's ($\circ$).}
  \label{fig:EvalConv}
\end{figure}

In Figure~\ref{fig:EvalConv} we plot the distance of the approximate
unstable eigenvalue $\lambda_u(L)$ obtained by the contour method to the
reference value $\lambda_u$. For the contour method
\eqref{eq:inhomapprox} is solved on $[-\frac{L}{2},\frac{L}{2}]$ with
periodic ($\circ$) and projection boundary conditions ($+$).  For both
 boundary conditions one finds
 an exponential rate of convergence with a significantly better rate
for the second one, as predicted by Theorem~\ref{thm:EConv}.
\begin{figure}[tb!]
  \footnotesize
  \psfrag{Interval length L}{ Intervall length $L$}
  \psfrag{log10(angle(V,V(L)))}{ $\log_{10}(\angle(y_u,y_u(L)))$}
  \psfrag{-10}[cc]{ $-10$}
  \psfrag{-8}[cc]{ $-8$}
  \psfrag{-6}[cc]{ $-6$}
  \psfrag{-4}[cc]{ $-4$}
  \psfrag{-2}[cc]{ $-2$}
  \psfrag{0}[cc]{ $0$}
  \psfrag{5}[cc]{ $5$}
  \psfrag{10}[cc]{ $10$}
  \psfrag{15}[cc]{ $15$}
  \psfrag{20}[cc]{ $20$}
  \psfrag{25}[cc]{ $25$}
  \psfrag{30}[cc]{ $30$}
  \begin{center}
    \includegraphics[scale=0.7]{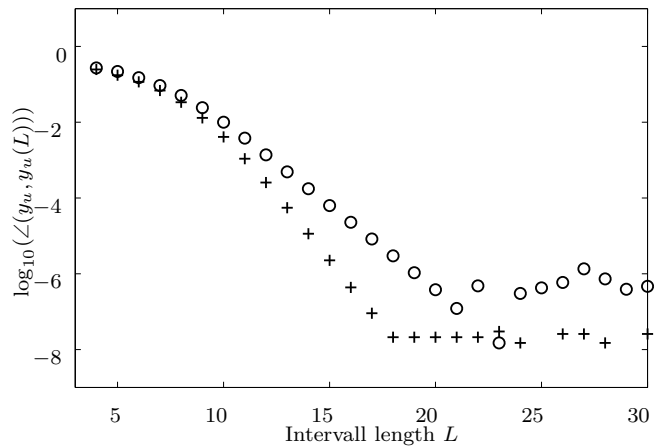}
  \end{center}
  \caption{Convergence of the approximate eigenfunction for projection
  boundary conditions ($+$) and periodic bc's ($\circ$).}
  \label{fig:EvectConv}
\end{figure}

The same observation is true for the convergence of the
eigenfunction. This is shown in Figure~\ref{fig:EvectConv}, where we
compare the angle between the approximate eigenfunction $y_u(L)$ and the reference
eigenfunction $y_u$. For the approximate eigenfunction, we use
\eqref{biorth}--\eqref{vapprox2} with  hat functions
$\widehat{u}_k(x)=\max\{0,1-|k-200-100x|\}$, $k=0,1,\dots,400$, which, together
with  $\widehat{w}_k$, form a biorthogonal system.

In Figure~\ref{fig:EvectPlot} we plot the $u$-component of the reference
eigenfunction (which is actually an approximation on $[-50,50]$) and
compare it to the approximate eigenfunctions obtained for the different
interval lengths $L=5,10,15$ with projection bc's. The 
approximate eigenfunctions are only shown on  $[-2,2]$ which is contained
in all domains of definition.
\begin{figure}[tb!]
  \footnotesize
  \psfrag{-6}{$-6$}
  \psfrag{-4}{$-4$}
  \psfrag{-2}{$-2$}
  \psfrag{0}{$0$}
  \psfrag{2}{$2$}
  \psfrag{4}{$4$}
  \psfrag{6}{$6$}
  \psfrag{u(x)}{}
  \psfrag{x}{$x$}
  \psfrag{Global1}{$y_u$}
  \psfrag{l=5}{$L=5$}
  \psfrag{l=10}{$L=10$}
  \psfrag{l=15}{$L=15$}
  \begin{center}
    \includegraphics[scale=0.7]{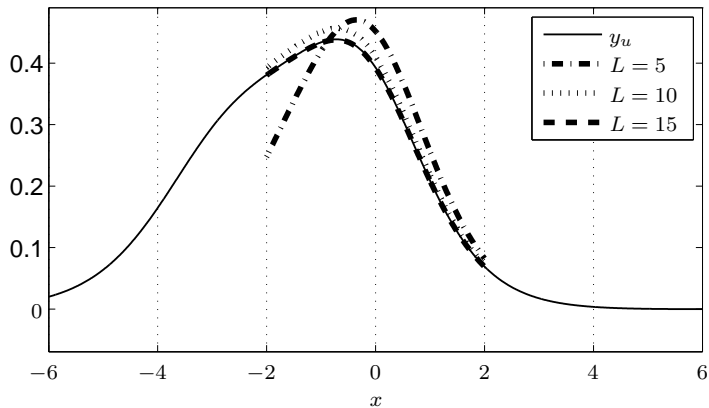}
  \end{center}
  \caption{First component of $y_u$ and its approximation by the contour
  method for different interval sizes.}
  \label{fig:EvectPlot}
\end{figure}

\subsection{Dependence on the number of quadrature points}
In our second experiment we fix the interval to $[-50,50]$ with
projection boundary conditions and vary the number of quadrature
points. As functionals $\widehat{w}_j$ we choose the
point evaluation at all grid points. We determine the rank as before and
take $\theta=10^{-10}$ to keep the influence of the rank test small.
All other data are the same as in the first experiment.
The results are shown in Figure~\ref{fig:quadConv}.
\begin{figure}[tb!]
  \begin{center}
    \footnotesize
    \psfrag{log10(|la-la(h)|)}{$\log_{10}(|\lambda_u-\lambda_u(M)|)$}
    \psfrag{log10(angle(V,V(h)))}{$\log_{10}(\angle(y_u,y_u(M)))$}
    \psfrag{Number of quadrature points}{Number of quadrature points}
    \subfigure[Eigenvalue]{
    \includegraphics[width=0.45\textwidth]{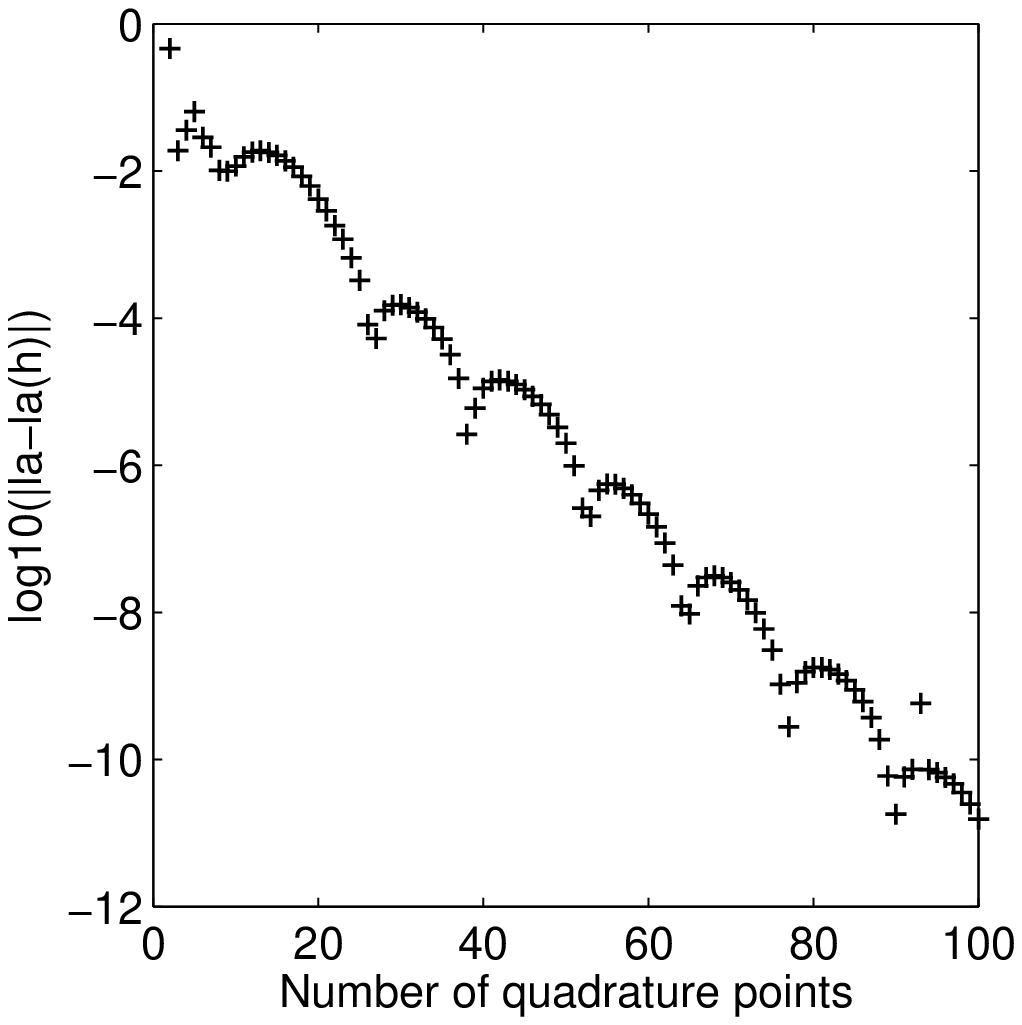}
    }~
    \subfigure[Eigenfunction]{
    \includegraphics[width=0.45\textwidth]{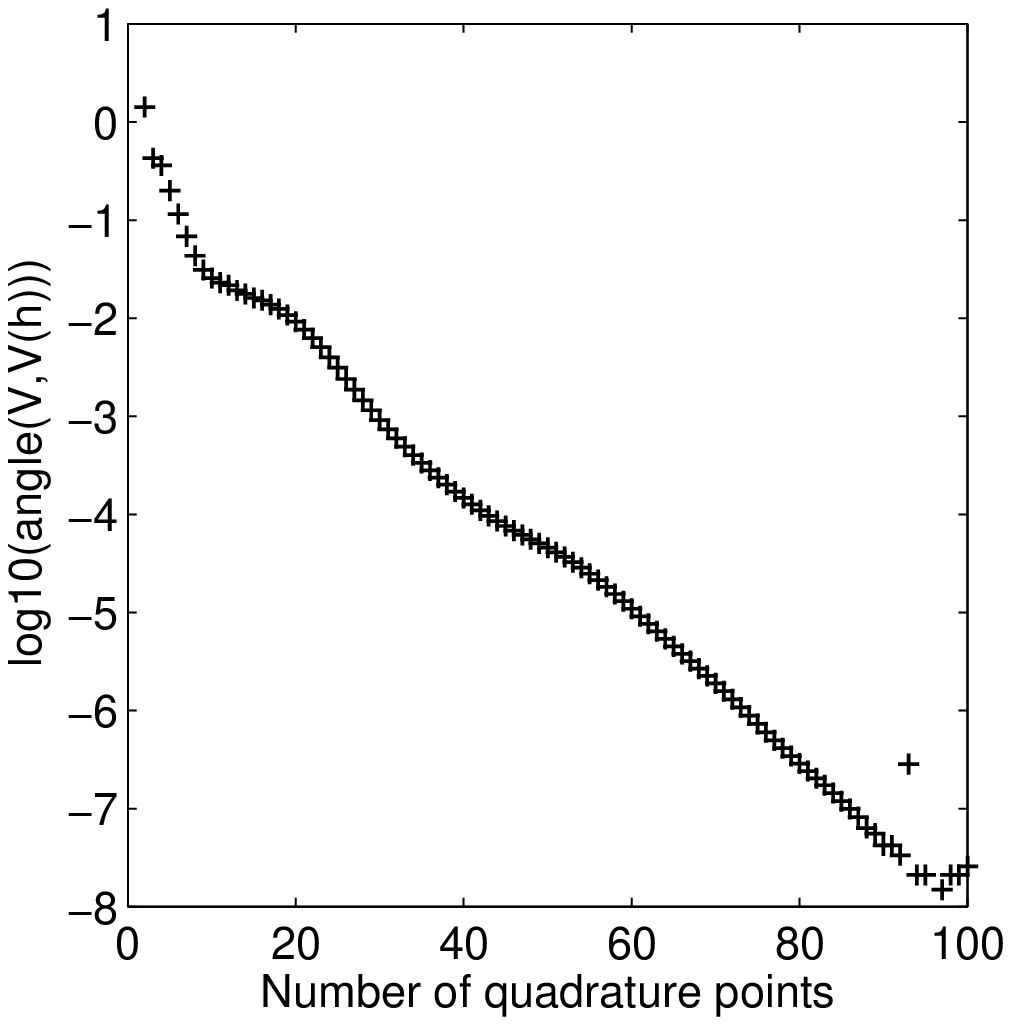}
    }
  \end{center}
  \caption{Convergence of eigenvalue and eigenfunction with
  increasing number of quadrature points.}
  \label{fig:quadConv}
\end{figure}

One observes exponential convergence rate with respect to the number
of quadrature points, see \cite{B12} for a proof. For the
eigenvalue errors there are some apparent resonances which have not yet been
inverstigated further. It turns out that quadrature errors 
dominate in this case and that error plots are almost identical for
periodic boundary conditions.

\subsection{Dependence on the rank test}
We now keep all data fixed but vary the rank test by prescribing
the value of $\varkappa$.

All other data are the same as in the previous experiment except
that we choose $45$ quadrature points on the contour.
In Table (c) from Figure~\ref{fig:ChoiceOfKappa} we list all singular
values of the numerical approximation $D_0^N$ in this case.
In Figures~\ref{fig:ChoiceOfKappa} (a) and (b) we plot the approximate
eigenvalues for $\varkappa=2$ and $\varkappa=10$, respectively.

It turns out that the two eigenvalues inside the circle are nearly
independent of $\varkappa$ for $\varkappa\ge 2$, while the  eigenvalues
outside heavily depend on $\varkappa$.

It is shown in \cite{B12} that values outside but close to the contour
still represent good approximations of eigenvalues. In our example,
however, these eigenvalues are generated by the essential spectrum
of the continuous problem which lies very close to the contour.
This case is not covered by the analysis in \cite{B12} and requires
further investigations.
\begin{figure}[tb!]
  \begin{center}
    \footnotesize
    \psfrag{R}[r]{$\mathbb{R}$}
    \psfrag{iR}[cc]{$i\mathbb{R}$}
    \psfrag{1}[cc]{$1$}
    \psfrag{0.5}[cc]{$0.5$}
    \psfrag{-0.5}[cc]{$-0.5$}
    \psfrag{-1}[cc]{$-1$}
    \psfrag{0}[cc]{$0$}
    \psfrag{1.5}[cc]{$1.5$}
    \psfrag{2}[cc]{$2$}
    \subfigure[$\varkappa=2$]{
    \includegraphics[width=0.45\textwidth]{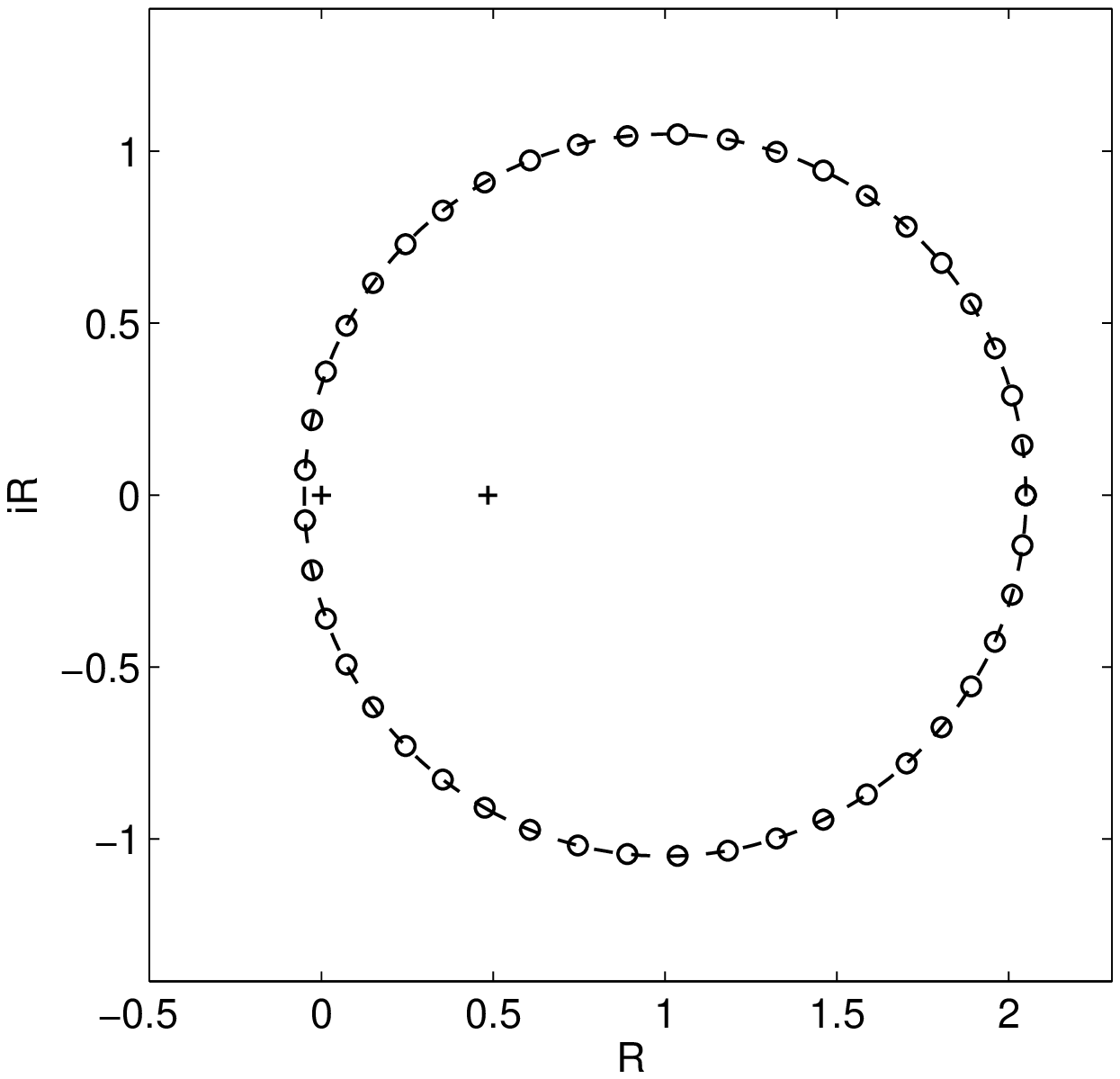}
    \label{fig:2svs}}~
    \subfigure[$\varkappa=10$]{
    \includegraphics[width=0.45\textwidth]{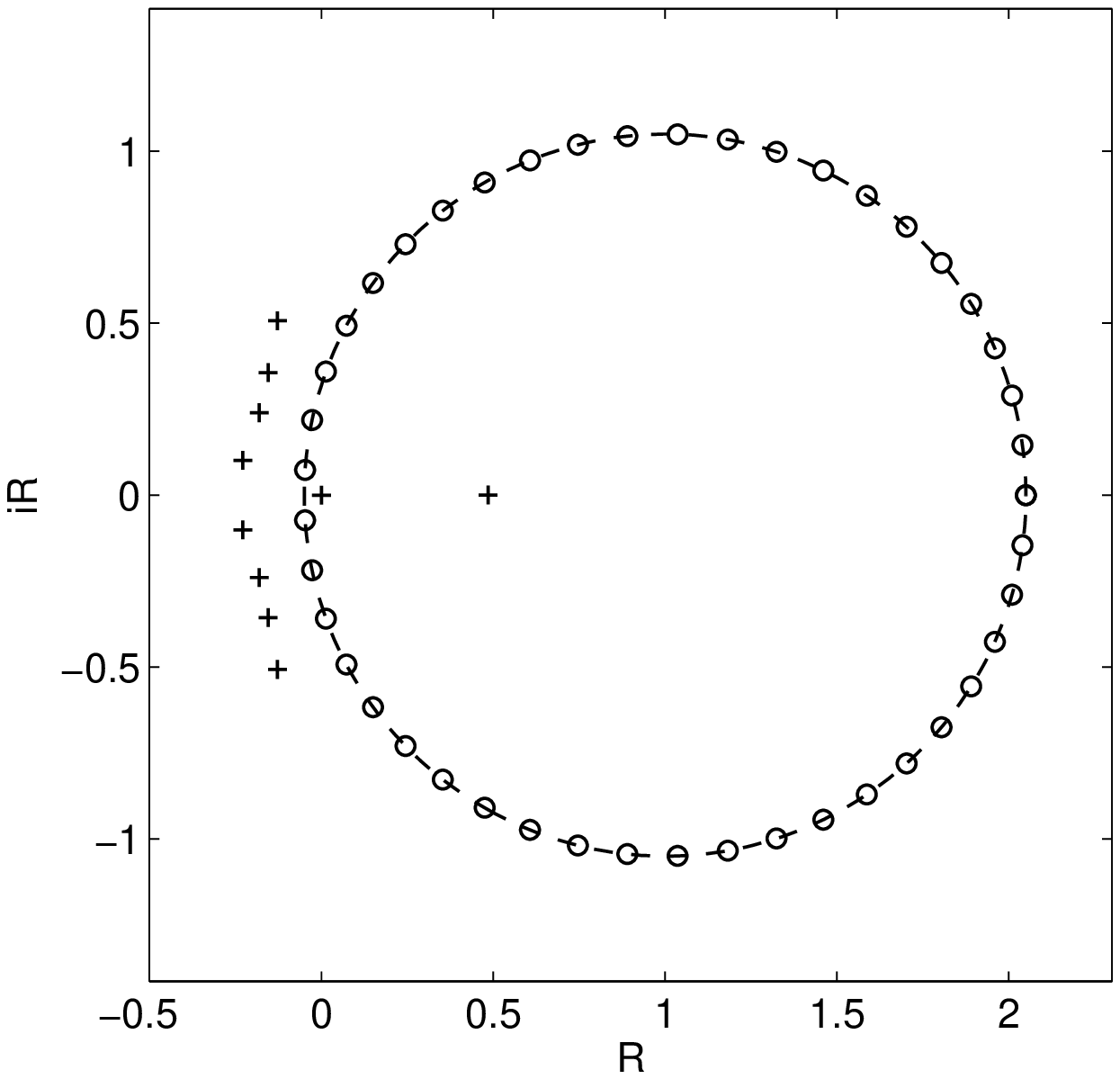}
    \label{fig:10svs}}\\
    \subfigure[Singular values of $D_0^N$.]{
    \footnotesize{
    \begin{tabular}[b]{rrrrrrrrrr}
      1.&2.&3.&4.&5.&6.&7.&8.&9.&10.\\
      $103$&$89$&$1.25$&$0.276$&$0.133$&$0.079$&$0.034$&$0.0066$&$0.0033$&$0.00077$
    \end{tabular}}}\label{tab:svs}
  \end{center}
  \caption{Influence of the rank test on the spectrum for two
  different values of $\varkappa$ (see \eqref{vkrank}). $\circ$: Quadrature
  points, $+$: approximate eigenvalues.}
  \label{fig:ChoiceOfKappa}
\end{figure}

\appendix 

\section{Embedding results for the function spaces $\mathcal{H}$ and $\mathcal{H}_J$}
The aim of this appendix is to show several embedding properties for the
function spaces used in Sections \ref{sec3.2} and \ref{s:converge}.
Recall the Banach spaces \eqref{domainF}, \eqref{eq:HJdef}, i.e.
\[\mathcal{H}=\big\{y\in L^2(\mathbb{R},\mathbb{C}^d):y\in AC_{\loc},
-y'+By\in L^2(\mathbb{R},\mathbb{C}^d)\big\}.\]
with  norm
$\|y\|_{\mathcal{H}}^2=\|y\|_{L^2}^2+\|-y'+By\|_{L^2}^2$ and
\[\mathcal{H}_J=\big\{y\in L^2(J,\mathbb{C}^d):y\in AC(J),
-y'+By\in L^2(J,\mathbb{C}^d)\big\},\]
with norm
$\|y\|_{\mathcal{H}_J}^2=\|y\|_{L^2(J)}^2+\|-y'+By\|_{L^2(J)}^2$.

In the following we assume $B\in L^1(\R,\C^{d,d})$ and let $T(x,x_0)$, $x,x_0\in\R$, denote the solution operator of $-y'+B(x)y$ (that is, the propagator of the differential equation $y'=B(x)y$ on $\R$) in the sense of Carath\'eodory, i.e.\
$T(\cdot,x_0)$ is a mild solution of the initial value problem
$Y'=B(x)Y$, $Y(x_0)=I_d$
in the sense of Carath\'eodory, or equivalently
\begin{equation}\label{eq:A1}
  T(x,x_0)=I_d+\int_{x_0}^x
  B(\xi)T(\xi,x_0)\,d\xi,\; x,x_0\in
  \mathbb{R}.
\end{equation}
Then $T\in C(\mathbb{R}^2,\mathbb{C}^{d,d})$ and
$T(\cdot,x_0)\in AC_{\loc}(\mathbb{R})$. Gronwall's inequality yields
for all $x,x_0\in \mathbb{R}$:
\begin{equation}\label{eq:A2}
  \|T(x,x_0)\|\le
  \exp\big(\int_{x_0}^x\|B(\xi)\|\,d\xi\big)
  \le \exp\big(\|B\|_{L^1}\big)=:K.
\end{equation}
Every $y\in\mathcal{H}_J$ for $J\subseteq\R$ satisfies the equation  $-y'+By=z$ on $J$ for some $z\in L^2(J,\C^d)$
and hence for all $x,x_0\in J\subseteq\R$ we have
\begin{equation}\label{eq:A3}
  y(x)=T(x,x_0)y(x_0)-\int_{x_0}^x T(x,\xi)z(\xi)\,d\xi.
\end{equation}
This implies for all $x,x_0 \in J\subseteq\R$
the estimate
\begin{equation}\label{eq:A4}
  |y(x)-y(x_0)|\le 
  K \|B\|_{L^1([x,x_0])}|y(x_0)|
  + \sqrt{|x-x_0|} K \|y\|_{\mathcal{H}_J},
\end{equation}
where we use $[x,x_0]$ to denote the interval $[x_0,x]$ in case $x_0<x$.
Indeed, \eqref{eq:A4} follows from
\begin{equation}\label{eq:A5}
  \big\|T(x,x_0)-I_d\big\|\le K
  \|B\|_{L^1([x,x_0])},
\end{equation}
and
\begin{equation}\label{eq:A6}
  \big\|\int_{x_0}^x T(x,\xi)z(\xi)\,d\xi\big\|
  \le \sqrt{|x-x_0|} K \|z\|_{L^2},
\end{equation}
which are easily obtained from \eqref{eq:A1} and \eqref{eq:A2}.

Our first lemma shows asymptotic decay of elements in
$\mathcal{H}$, that is, embedding of $\mathcal{H}$ in the space $C_0(\R,\C^d)=\{ y \in C(\R,\C^{d}): \lim_{|x|\rightarrow \infty}y(x)=0 \}$. We supply a short proof and refer to \cite[Lem.3.16]{CL} for a more general case.
\begin{lemma}\label{lem:A1} Assume $B\in L^1(\R,\C^{d,d})$. Then
  $\mathcal{H}\subset  C_0(\R,\C^d)$ and $\cH_J\subset C(J,\C^d)$.
\end{lemma}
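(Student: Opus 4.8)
The strategy is to exploit the integral representation \eqref{eq:A3}: every $y\in\cH_J$ solves $-y'+By=z$ on $J$ with $z\in L^2(J,\C^d)$, hence for fixed $x_0\in J$ one has the variation-of-constants formula \eqref{eq:A3}. The continuity statement $\cH_J\subset C(J,\C^d)$ is then immediate: the right-hand side of \eqref{eq:A3} is continuous in $x$ because $T(x,\cdot)$ is continuous (as recorded after \eqref{eq:A1}) and the integral term $\int_{x_0}^x T(x,\xi)z(\xi)\,d\xi$ depends continuously on $x$ by dominated convergence, using the uniform bound \eqref{eq:A2} and $z\in L^2\subset L^1_{\loc}$; alternatively \eqref{eq:A4} already exhibits $y$ as locally uniformly continuous. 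This disposes of the easy half and of the local regularity needed in the second half.

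For the decay statement $\cH\subset C_0(\R,\C^d)$, fix $y\in\cH$, so $z:=-y'+By\in L^2(\R,\C^d)$. First I would show $y$ is bounded: since $y\in L^2(\R,\C^d)$ and (by the previous paragraph) continuous, there is a sequence $x_n\to+\infty$ with $|y(x_n)|\to 0$; applying \eqref{eq:A3} with $x_0=x_n$ and letting $x$ range over $[x_n,x_n+1]$, the estimate \eqref{eq:A4} gives $\sup_{x\in[x_n,x_n+1]}|y(x)-y(x_n)|\le K\|B\|_{L^1([x_n,x_n+1])}|y(x_n)|+K\|y\|_{\cH}$, which is bounded uniformly in $n$; a symmetric argument works on $\R_-$, so $y\in L^\infty$. (One can also quote Lemma~\ref{lem:A2} from the appendix for the bound $\|y\|_{L^\infty}\le c\|y\|_{\cH}$, though here I only need boundedness of this particular $y$.)

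The crux is then the vanishing at $\pm\infty$. I would argue by contradiction: if $\limsup_{x\to+\infty}|y(x)|=2\delta>0$, pick $x_n\to+\infty$ with $|y(x_n)|\ge\delta$. Choose $h>0$ small enough that $\sqrt{h}\,K\|y\|_{\cH}<\delta/4$; since $\|B\|_{L^1(\R)}<\infty$ we also have $K\|B\|_{L^1([x_n,x_n+h])}<1/2$ for $n$ large. Then \eqref{eq:A4} (with $x_0=x_n$, $x\in[x_n,x_n+h]$) forces $|y(x)|\ge|y(x_n)|-\tfrac12|y(x_n)|-\delta/4\ge\delta/4$ on intervals $[x_n,x_n+h]$ which, after passing to a subsequence, may be taken pairwise disjoint. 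This contradicts $y\in L^2(\R,\C^d)$, because $\sum_n\int_{x_n}^{x_n+h}|y|^2\ge\sum_n h(\delta/4)^2=\infty$. Hence $\lim_{x\to+\infty}y(x)=0$, and the same argument at $-\infty$ completes the proof. The main obstacle is this last contradiction step — one must combine the quantitative continuity bound \eqref{eq:A4}, the finiteness of $\|B\|_{L^1}$, and square-integrability in just the right order so that the "bad" set carries infinite $L^2$-mass; the rest is bookkeeping with \eqref{eq:A1}–\eqref{eq:A6}.
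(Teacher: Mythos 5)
Your proof is correct and follows essentially the same route as the paper: deduce continuity from the integral representation \eqref{eq:A3}, then argue by contradiction using \eqref{eq:A4} and the $L^1$-smallness of $B$ on short intervals to force $|y|$ to stay bounded below on infinitely many disjoint subintervals, violating $y\in L^2$. Your preliminary boundedness step is harmless but superfluous — the contradiction argument itself never invokes it, as in the paper's proof.
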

\begin{proof}
Inclusions  $\mathcal{H}\subset C(\mathbb{R},\C^d)$ and  $\cH_J\subset C(J,\C^d)$ follow from \eqref{eq:A3}. Assume that for some
   $y\in \mathcal{H}$ there is a sequence
  $(x_N)_{N\in \mathbb{N}}\subset\R$ with
  $|x_N|\to\infty$, so that $|y(x_N)|\ge \nu >0$. Without loss of
  generality we may assume $x_{N+1}\ge x_N+1$. Now 
let $\delta_0=\min\{\frac{1}{2},\nu^2(3K\|y\|_{\mathcal{H}})^{-2}\}$.
Since $B\in L^1(\mathbb{R},\mathbb{C}^{d,d})$
  we can choose $0<\delta_1\le \delta_0$, such that
$K \int_\mathcal{M} \|B(x)\|\,dx\le \frac{1}{3}$ for all measurable
 $\mathcal{M}\subset  \mathbb{R}$ with $\meas(\mathcal{M})\le 2\delta_1$.
   For $|x-x_N|\le \delta_1$ inequality \eqref{eq:A4} then implies
  \[|y(x)|\ge
  |y(x_N)|-\frac{1}{3}|y(x_N)|-\frac{\nu}{3}\ge\frac{\nu}{3}.\]
  This leads to a contradiction via the estimate
  \[\|y\|_{\mathcal{H}}^2\ge \int_{\mathbb{R}}|y(x)|^2\,dx\ge
  \sum_{N=0}^\infty \int_{x_N-\delta_1}^{x_N+\delta_1}|y(x)|^2\, dx
  =\infty.\]
 \hfill \end{proof}
Next, we show that embedding of $\mathcal{H}_J$ in $L^\infty(J,\C^d)$ is uniform for all $J\subseteq\R$.
\begin{lemma}\label{lem:A2}
There exists a constant $C>0$ such that for all intervals
  $J=[a,b]\subset \mathbb{R}$ with $b-a\ge1$ and for $J=\R$,
  \begin{equation}\label{eq:A7}
    \|y\|_{L^\infty(J)}\le C\|y\|_{\mathcal{H}_J},\; \text{ for all}\; y\in
    \mathcal{H}_J.
  \end{equation}
\end{lemma}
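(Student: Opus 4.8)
The statement is a uniform Sobolev-type embedding $\mathcal{H}_J\hookrightarrow L^\infty(J)$ with a constant independent of the interval $J$ (as long as $|J|\ge 1$) and also valid for $J=\mathbb{R}$. The idea is to bound $|y(x_0)|$ for an arbitrary point $x_0\in J$ by first choosing a good reference point $x_1$ near $x_0$ where $|y(x_1)|$ is controlled by the $L^2$-norm, and then transporting the estimate back to $x_0$ using the variation-of-constants formula \eqref{eq:A3} together with the bounds \eqref{eq:A4}, \eqref{eq:A5}, \eqref{eq:A6}. Concretely: fix $\delta>0$ small enough that $K\int_{\mathcal M}\|B\|\,dx\le \tfrac12$ whenever $\meas(\mathcal M)\le 2\delta$ (possible since $B\in L^1$), and also $\delta\le \tfrac12$. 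Given $x_0\in J$ with $|J|\ge 1$, there is a subinterval $I\subset J$ of length $\delta$ with $x_0\in I$ (if $x_0$ is near an endpoint, slide $I$ inward — this is where $|J|\ge 1>\delta$ is used). By the mean value property of the integral there is $x_1\in I$ with $|y(x_1)|^2\le \tfrac1\delta\int_I|y|^2\,dx\le \tfrac1\delta\|y\|_{\mathcal H_J}^2$, i.e. $|y(x_1)|\le \delta^{-1/2}\|y\|_{\mathcal H_J}$.

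Then apply \eqref{eq:A4} with $x=x_0$, $x_0=x_1$: since $|x_0-x_1|\le\delta$ and $\|B\|_{L^1([x_1,x_0])}\le \|B\|_{L^1}$, one gets
\[
|y(x_0)|\le |y(x_1)| + K\|B\|_{L^1([x_1,x_0])}|y(x_1)| + \sqrt{|x_0-x_1|}\,K\,\|y\|_{\mathcal H_J}
\le \big(\tfrac32\,\delta^{-1/2} + \sqrt{\delta}\,K\big)\|y\|_{\mathcal H_J},
\]
where the choice of $\delta$ was used to make $K\|B\|_{L^1([x_1,x_0])}\le\tfrac12$. Since $\delta$ depends only on $K=\exp(\|B\|_{L^1})$ and not on $J$ or $y$, this gives \eqref{eq:A7} with $C=\tfrac32\delta^{-1/2}+\sqrt{\delta}\,K$, taking the supremum over $x_0\in J$. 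For $J=\mathbb{R}$ the same argument applies verbatim (every point sits in some subinterval of length $\delta$), and indeed Lemma \ref{lem:A1} already gives $\mathcal H\subset C_0(\mathbb R,\mathbb C^d)\subset L^\infty$; one only needs to observe the constant is the same $C$.

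**Main obstacle.** The only subtlety is the uniformity of the constant: one must make sure the "reference point" $x_1$ can always be chosen within a fixed distance $\delta$ of $x_0$ while staying inside $J$, which forces the hypothesis $|J|\ge 1$ and the requirement $\delta<1$ (in particular $\delta\le\tfrac12$ suffices). If $x_0$ lies within $\delta$ of an endpoint of $J$, one picks $I$ to be the endpoint subinterval of length $\delta$ rather than a symmetric neighborhood; then $|x_0-x_1|\le\delta$ still holds. Everything else is a direct bookkeeping exercise with the estimates \eqref{eq:A1}--\eqref{eq:A6} already established in the appendix, so no real calculation is needed beyond what is written above.
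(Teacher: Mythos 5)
Your proof is correct and uses the same key ingredients as the paper's own argument — estimate \eqref{eq:A4} together with a $\delta$ chosen so that $K\int_{\mathcal M}\|B\|\,dx\le\tfrac12$ on sets of small measure, and the hypothesis $b-a\ge 1>\delta$ to fit a $\delta$-interval around any point — just run in the reverse direction: you propagate from a point where $|y|$ is below its $L^2$-average on a $\delta$-interval out to an arbitrary point, whereas the paper starts from the maximizer of $|y|$ (whose existence needs Lemma \ref{lem:A1}) and derives a lower bound on $\|y\|_{L^2(J)}$, with a case distinction. This is essentially the same approach, and your variant is, if anything, slightly cleaner since it is direct and does not require the supremum to be attained.
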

\begin{proof}
  Let $y\in \mathcal{H}_J$, then
  $\|y\|_{L^\infty}<\infty$ and there is
  $\bar{x}\in J$ with $|y(\bar{x})|=\|y\|_{L^\infty}$
  by Lemma \ref{lem:A1}.
  Let $\delta\in(0, \min(1,K^{-2}))$ be so small that 
  $K \int_\mathcal{M} |B(x)|\, dx \le \frac{1}{2}$ 
  holds for all measurable $\mathcal{M}\subset \mathbb{R}$ with 
  $\meas(\mathcal{M})\le \delta$. Let
  $C=2\sqrt{(2+2\delta)/{\delta}}$. 
  If $\|y\|_{L^\infty}^2\le 8 \|y\|_{\cH_J}^2$ then \eqref{eq:A7} holds
  since $C>\sqrt{8}$. If
  $|y(\bar{x})|^2> 8 \|y\|_{\cH_J}^2$ then,
  by \eqref{eq:A4}, for all $x\in J$
  with $|x-\bar{x}|\le \delta$ we have
  \[|y(x)|\ge |y(\bar{x})|-\frac{1}{2}|y(\bar{x})|-\sqrt{\delta_1}
  K \|y\|_{\mathcal{H}_J}\ge
  \frac{1}{2}|y(\bar{x})|-\|y\|_{\mathcal{H}_J}>0.\]
  From this  we obtain
  \begin{multline*}
    \|y\|_{\mathcal{H}_J}^2\ge \int_J |y(x)|^2\,dx\ge \int_{J\cap
    [\bar{x}-\delta,\bar{x}+\delta]} \big(\tfrac{1}{2}|y(\bar{x})|-
    \|y\|_{\mathcal{H}_J}\big)^2\, dx\\
    \ge \delta \left(
    \frac{\|y\|_{L^\infty}^2}{4}-\|y\|_{L^\infty}\|y\|_{\mathcal{H}_J}
    +\|y\|_{\mathcal{H}_J}^2\right)
    \ge \delta \left(
    \frac{\|y\|_{L^\infty}^2}{8}-\|y\|_{\mathcal{H}_J}^2\right),
  \end{multline*}
and \eqref{eq:A7} follows.
\hfill\end{proof}



\end{document}